\documentclass{amsart}

\usepackage{xcolor}
\definecolor{nicered}{rgb}{0.6, 0, 0.1}
\definecolor{niceblue}{rgb}{0.06, 0.3, 0.57}
\definecolor{nicegreen}{rgb}{0.0, 0.51, 0.5}
\usepackage{color}

\usepackage[colorlinks=true,pagebackref,hyperindex,citecolor=nicegreen,linkcolor=niceblue,urlcolor=nicered]{hyperref}
\usepackage{amsmath,amsfonts,amssymb}
\usepackage{mathrsfs}
\usepackage{mathtools}
\usepackage{microtype}
\usepackage[all]{xy}
\usepackage[T1]{fontenc}
\usepackage{lmodern}
\usepackage{xparse}
\usepackage{enumitem}
\setlist[enumerate]{leftmargin=2em,label=\textup{\arabic*.}}
\usepackage{accents}
\newcommand\ubar[1]{%
  \underaccent{\bar}{#1}}

\setlength{\parskip}{0.4em}
\medmuskip=4mu plus 2mu minus 2mu 
\thickmuskip=5mu plus 3mu minus 1mu 

\usepackage{cleveref}
\crefname{equation}{Eq.}{Eqs.}
\crefname{theorem}{Theorem}{Theorems} 
\crefname{lemma}{Lemma}{Lemmas}
\crefname{corollary}{Corollary}{Corollaries}
\crefname{proposition}{Proposition}{Propositions}
\crefname{definition}{Definition}{Definitions}
\crefname{remark}{Remark}{Remarks}
\crefname{example}{Example}{Examples}
\crefname{notation}{Notation}{Notations}
\crefname{setup}{Setup}{Setup}
\crefname{question}{Question}{Question}
\crefname{convention}{Convention}{Conventions}

\newtheorem{theorem}{Theorem}[section]
\newtheorem{corollary}[theorem]{Corollary}
\newtheorem{lemma}[theorem]{Lemma}
\newtheorem{proposition}[theorem]{Proposition}

\newtheorem{question}[theorem]{Question}
\theoremstyle{definition}
\newtheorem{definition}[theorem]{Definition}

\newtheorem{example}[theorem]{Example}
\newtheorem{remark}[theorem]{Remark}
\newtheorem{notation}[theorem]{Notation}
\newtheorem{convention}[theorem]{Convention}

\newtheorem{setup}[theorem]{Setup}
\numberwithin{equation}{section}

\newtheorem{theoremx}{Theorem}

\newtheorem{corollaryx}[theoremx]{Corollary}

	
\newcommand{\lct}{\operatorname{lct}}	

\newcommand{\m}{\mathfrak{m}}

\newcommand{\ideal}[1]{\langle #1 \rangle}
\newcommand{\tminusf}{\seq{t}-\seq{f}}
\newcommand{\idtminusf}{\ideal{\tminusf}}
\newcommand{\wrttminusf}{_{\idtminusf}}


\newcommand{\KK}{\mathbb{K}}
\newcommand{\CC}{\mathbb{C}}
\newcommand{\QQ}{\mathbb{Q}}
\newcommand{\RR}{\mathbb{R}}
\newcommand{\cJ}{\mathcal{J}}

\newcommand{\cO}{\mathcal{O}}
\newcommand{\cF}{\mathcal{F}}

\newcommand{\Cech}{ \check{\rm{C}}}
\newcommand{\cR}{\mathcal{R}}

\newcommand{\ZZ}{\mathbb{Z}}
\newcommand{\NN}{\mathbb{N}}

\newcommand{\act}{\mathbin{\vcenter{\hbox{\scalebox{0.7}{$\bullet$}}}}}

\renewcommand{\geq}{\geqslant}
\renewcommand{\leq}{\leqslant}
\renewcommand{\ge}{\geqslant}
\renewcommand{\le}{\leqslant}


\newcommand{\Spec}{\operatorname{Spec}}
\newcommand{\Hom}{\operatorname{Hom}}

\newcommand{\ord}{\operatorname{ord}}
\newcommand{\nsupp}{\operatorname{nsupp}}

\newcommand{\Ker}{\operatorname{Ker}}

\newcommand{\IM}{\operatorname{Im}}

\newcommand{\gr}{\operatorname{gr}}

\newcommand{\CoKer}{\operatorname{Coker}}
\newcommand{\cC}{\mathcal{C}}

\newcommand{\club}{\mathbin{\vcenter{\hbox{\scalebox{0.5}{$\clubsuit$}}}}}
\newcommand{\spade}{\mathbin{\vcenter{\hbox{\scalebox{0.5}{$\spadesuit$}}}}}

\newcommand{\wrtt}{_{\ideal{\seq{t}}}}
\newcommand{\wrtf}{_{\ideal{\seq{f}}}}
\newcommand{\VV}{\mathfrak{V}}

\NewDocumentCommand \pd { m o }
{
\IfNoValueTF {#2}
{ \partial_{#1} }
{ \partial_{#1} \act #2 }
}
\NewDocumentCommand \fpd { m o }
{
\IfNoValueTF {#2}
{ \frac{\partial\ }{\partial #1 } }
{ \frac{\partial #2 }{\partial #1 } }
}

\newcommand{\fs}{\boldsymbol{f^s}}
\newcommand{\fsl}{\boldsymbol{\ubar{f}}^{\boldsymbol{\ubar{s}}}}

\newcommand{\fsll}{\boldsymbol{f_1^{s_1}} \cdots \boldsymbol{f_\ell^{s_\ell}} }

\newcommand{\R}{R}
\newcommand{\A}{A}
\newcommand{\T}{T}

\newcommand{\ti}[1]{\tau_{#1}^{\phantom |}}

\NewDocumentCommand \seq { o m }
{
\IfNoValueTF {#1}
{ \ubar{#2} }
{ {#2}_1,\ldots,{#2}_{#1} }
}

\NewDocumentCommand \pt { o m }
{
\IfNoValueTF {#1}
{ #2 }
{ ({#2}_1,\ldots,{#2}_{#1}) }
}

\usepackage[textwidth=3.3 cm,textsize=small,shadow
]{todonotes}

 
\newcommand{\details}[2][]{} 
 



\title[Bernstein--Sato functional equations]{Bernstein--Sato functional equations, $V$-filtrations, and multiplier ideals of direct summands}

\author[\`Alvarez Montaner et al.]{Josep \`Alvarez Montaner{$^1$}}
\address{Departament de Matem\`atiques  and  Institut de Matem\`atiques de la UPC-BarcelonaTech (IMTech)\\  Universitat Polit\`ecnica de Catalunya, Barcelona 08028, Spain} 
\email{josep.alvarez@upc.edu}

\author[]{Daniel J. Hern\'andez{$^2$}}
\address{Department of Mathematics, University of Kansas, Lawrence, KS 66045, USA}
\email{hernandez@ku.edu}

\author[]{Jack Jeffries{$^3$}}
\address{University of Nebraska-Lincoln, Lincoln, NE~68502, USA}
\email{jack.jeffries@unl.edu}

\author[]{Luis N\'u\~nez-Betancourt${^4}$}
\address{Centro de Investigaci\'on en Matem\'aticas, Guanajuato, Gto., M\'exico}
\email{luisnub@cimat.mx}

\author[]{Pedro Teixeira}
\address{Department of Mathematics, Knox College, Galesburg, IL 61401, USA}
\email{pteixeir@knox.edu}

\author[]{Emily E. Witt${^6}$}
\address{Department of Mathematics, University of Kansas, Lawrence, KS 66045, USA}
\email{witt@ku.edu}

\thanks{{$^1$}Partially supported by Generalitat de Catalunya 2017SGR-932 project and Spanish Ministerio de Econom\'ia y Competitividad MTM2015-69135-P}
\thanks{{$^2$}Partially supported by the NSF Grants DMS-1304250 and DMS-1600702.}
\thanks{{$^3$}Partially supported by the NSF Grant DMS-1606353.}
\thanks{{$^4$}Partially supported by the NSF Grant DMS-1502282, CONACYT Grant 284598, and Cátedras Marcos Moshinsky.}

\thanks{{$^6$}Partially supported by the NSF Grant DMS-1623035 and Simons Foundation Grant 582574.}

\subjclass[2010]{Primary: 14F10, 13N10, 13A35, 16S32; Secondary: 13D45, 14B05, 14M25, 13A50.}
\keywords{$D$-module, Bernstein--Sato polynomial, direct summand, $V$-filtrations, ring of invariants, multiplier ideal.}

\hyphenation{dif-fer-en-tial-ly}

\begin{document}

\begin{abstract}
      This paper investigates the existence and properties of a Bernstein--Sato functional equation in nonregular settings. 
      In particular, we construct $D$-modules in which such formal equations can be studied. 
      The existence of the Bernstein--Sato polynomial for a direct summand of a polynomial over a field is proved in this context.  It is observed that this polynomial can have zero as a root, or even positive roots.
      Moreover, a theory of $V$-filtrations is introduced for nonregular rings, and the existence of these objects is established for what we call \emph{differentially extensible summands}.
      This family of rings includes toric, determinantal, and other invariant rings.
      This new theory is applied to the study of multiplier ideals and Hodge ideals of singular varieties.
      Finally, we extend known relations among the objects of interest in the smooth case to the setting of singular direct summands of polynomial rings. 
\end{abstract}

\maketitle
\tableofcontents
\section{Introduction}

The  theory of \emph{$D$-modules}---modules over rings of differential operators---on a smooth analytic or complex algebraic variety has been an active research area over the last forty years.  It serves as a powerful tool in solving problems arising in a wide range of mathematical disciplines, from analysis to algebraic geometry, the topology of varieties, representation theory, and commutative algebra.
This paper centers around two major fundamental constructions in $D$-module theory: 
 \emph{Bernstein--Sato polynomials} and \emph{$V$-filtrations}. 
	
The Bernstein--Sato polynomial of a holomorphic or regular function $f$ over $\CC$ 
is the monic polynomial $b(s)$ in $\CC[s]$ of least degree for which there exists a polynomial differential operator $\delta(s)$ in the indeterminate $s$ that satisfies the functional equation 
\[
\delta(s) \act f^{s+1} = b(s) f^s. 
\]
This object originated in independent constructions by Bernstein \cite{Bernstein}, to establish meromorphic extensions of distributions, and by Sato, as the $b$-function in the theory of prehomogeneous vector spaces \cite{MR595585,MR1086566}.

The existence of a nonzero polynomial satisfying the functional equation above was proved by Bernstein over polynomial rings, and extended by Bj\"ork \cite{Bjork74,Bjork79} to power series rings.
Kashiwara \cite{KashiwaraRationality} proved that the roots of the Bernstein--Sato polynomial for holomorphic functions are negative rational numbers, extending a result of Malgrange \cite{Mal74, BernsteinRationalMalgrange} for functions with isolated singularities.
Indeed, Malgrange exhibited  a relation between these roots and the eigenvalues of the monodromy of the Milnor fiber, and the rationality of the roots means that the monodromy is {quasi-unipotent}. 

Since its inception, the Bernstein--Sato polynomial has found broad applications in the study of singularities.
For instance, the roots of the Bernstein--Sato polynomial are related to the jumping numbers of multiplier ideals \cite{Kol,ELSV2004,BudurSaito05}, to spectral numbers \cite{Saito93, Budur03, Saito07, MR2377895}, and to poles of zeta functions \cite{DenefLoeser}.
The Bernstein--Sato polynomial also plays a key role in understanding algorithmic aspects of local cohomology modules and de Rham cohomology \cite{Oaku97, Uli99, Uli00, OTW00, MR1808827}.

Bernstein--Sato polynomials are closely related to the notion of a \emph{$V$-filtration} on a $D$-module along an element $f$, first introduced by Malgrange and Kashiwara \cite{MalgrangeVfil, KashiwaraVfil}.
This is a decreasing filtration on a $D$-module indexed by the rational numbers, and satisfying several conditions; see  \Cref{DefVfil,VfilM}.
In fact, a $V$-filtration can be described using a relative version of the Bernstein--Sato polynomial \cite{Sabbah87, Mebkhout_book}.
These filtrations were originally introduced to relate Deligne's nearby and vanishing cycles \cite{SGA7_2} to their corresponding $D$-modules via the Riemann--Hilbert correspondence. 

In the most general form, $V$-filtrations are known to exist for {regular holonomic} $D$-modules with {quasi-unipotent monodromy}. This condition is satisfied, for example, by the coordinate ring of a smooth variety. Quite nicely, the $D$-module-theoretic notion of $V$-filtration in this case  is essentially equivalent to the algebro-geometric notion of \emph{multiplier ideals} \cite{BudurSaito05}. 

Both the notion of the Bernstein--Sato polynomial and the $V$-filtration have been generalized to the setting of nonprincipal ideals by Budur, Musta\c{t}\u{a}, and Saito \cite{BMS2006a}.
The roots of the Bernstein--Sato polynomial in this context are still negative rational numbers, and the $V$-filtration of the coordinate ring along the ideal essentially coincides with the filtration of multiplier ideals of the ideal.
		
On varieties that are not smooth, there is a natural notion of a ring of differential operators, as defined by Grothendieck \cite{EGA}, but does not necessarily have the same favorable ring-theoretic properties. 
To start with, they are generally not generated by homotheties (maps of the form $s \mapsto rs$ for fixed $r$) and derivations, and a full description of these rings is only known in special cases. 
Moreover, rings of differential operators are in general neither left- nor right-Noetherian \cite{DiffNonNoeth}, and a theory of holonomic  $D$-modules is no longer available in this case.
For many applications, the theory of $D$-modules over singular varieties can be approached by Kashiwara's equivalence,  in which a singular variety is embedded into a smooth one \cite{KashiwaraEquiv}, so that one can consider the subcategory of $D$-modules over the smooth variety supported on the singular subvariety.
However, this approach is not always satisfactory, as the coordinate ring of the singular variety is not a $D$-module here. 

Nonetheless, multiplier ideals make sense for all normal varieties \cite{dFH}, and one might hope to develop appropriate Bernstein--Sato and $V$-filtration theories compatible with the multiplier ideal theory, in a way analogous to the smooth case, at least for a reasonable class of singularities.
	
The first major advance in this direction was due to Huneke and the first and fourth authors  \cite{AMHNB}.
They proved that every element $f$ (or more generally, ideal) in a  direct summand of a polynomial or formal power series ring  admits a  Bernstein--Sato polynomial in  a weaker sense, in which the functional equation $\delta(t) \act f^{t+1} = b(t) f^t$ is satisfied for all \emph{integer} values $t$.
Moreover, they showed that the Bernstein--Sato polynomial of $f$ \emph{considered as an element of the direct summand} divides the Bernstein--Sato polynomial of $f$ \emph{considered as an element of the polynomial ring}, but that they are not equal in general.
In fact, under the extra condition that every differential operator of the direct summand extends to the polynomial ring, equality always holds \cite[Theorem~6.11]{BJNB}.
 
In the pursuit of a theory of Bernstein--Sato polynomials in nonregular rings, previous results   \cite{AMHNB} are not fully satisfactory, in the sense that they do not realize the functional equation as a \emph{formal equality} in an appropriate $D$-module. Moreover, the lack of a theory of $V$-filtrations obstructs our understanding of the relationship between Bernstein--Sato polynomials and  multiplier ideals in nonregular contexts.

This paper develops a full theory of Bernstein--Sato polynomials and $V$-filtrations for a large class of direct summands of polynomial rings.
Namely, we concentrate on direct summands satisfying the aforementioned condition on the extensibility of differential operators, a class of rings that includes rings of invariants of finite groups, toric rings, and determinantal rings. 

\Cref{SecDirectSummand} builds the foundation for a full theory of Bernstein--Sato polynomials in nonregular rings. 
Given a field  $\KK$ of characteristic zero, consider a
$\KK$-algebra $\A$, and 
its ring of $\KK$-linear differential operators $D_{\A|\KK}$.  
For simplicity, we highlight our main results for Bernstein--Sato polynomials of a \emph{single element} $f\in \A$ here, but all results extend to those associated to a sequence of elements $\seq{f}=\seq[\ell]{f} \in \A$, and to the relative version of the Bernstein--Sato polynomial. 

The functional equation
\[\delta(s)\act f\fs=b(s) \fs\]
should be understood formally as an equality in
\[M^\A[\fs]\coloneqq \A_f [s] \fs,\]
which is the free rank-one $\A_{f}[s]$-module generated by the formal symbol $\fs$.  
That said, the specialized version of the Bernstein--Sato functional equation for direct summands \cite{AMHNB}, which reads as $\delta(t) \act f^{t+1} = b(t) f^t$ for all $t\in \ZZ$, is only a family of equalities in the localization $\A_f$.
The obstruction to the formal version in this context is the existence of a $D_{\A|\KK}[s]$-module structure on $M^\A[\fs]$, an issue that was not previously  addressed.
The major goal of \Cref{SecDirectSummand} is to determine such a structure. 

\begin{theoremx} (\Cref{wellDefinedActionOnMR: P})
   Suppose that $\A$ is
   either finitely generated over $\KK$, or  complete. 
	Then there exists a unique $D_{\A|\KK}[{s}]$-module structure on $M^\A[\fs]$ that is compatible with the $D_{\A|\KK}$-module structure on $\A_{f}$  after specialization.
\end{theoremx}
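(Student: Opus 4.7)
Any compatible $D_{\A|\KK}[s]$-module structure on $M^{\A}[\fs] = \A_f[s]\fs$ is determined by its action on the generators $g \fs$ for $g \in \A$. Writing $\delta \cdot (g\fs) = \Phi_{\delta, g}(s)\, \fs$ with $\Phi_{\delta, g}(s) \in \A_f[s]$, compatibility with the $D_{\A|\KK}$-action on $\A_f$ after specialization at each $s = k \in \NN$ forces the identity
\begin{equation*}
\Phi_{\delta, g}(k) = f^{-k}\,\delta(g f^k) \quad \text{in } \A_f.
\end{equation*}
A polynomial in $s$ with coefficients in $\A_f$ is determined by its values on any infinite subset of $\NN$, so $\Phi_{\delta, g}$ is pinned down uniquely, and with it the whole module structure.

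\textbf{Existence hinges on a polynomial lemma.} The content of the theorem is the existence of the polynomials $\Phi_{\delta, g}$: namely, for every $\delta \in D_{\A|\KK}$ of order at most $n$ and every $g \in \A$, the sequence $k \mapsto f^{-k}\,\delta(g f^k)$ should extend to a polynomial of degree at most $n$ in $\A_f[s]$. I would prove this by induction on $n$. The base case $n = 0$ is immediate since $\delta$ is multiplication by an element of $\A$, giving a constant sequence. For the inductive step, the commutator identity
\begin{equation*}
\delta(g f^{k+1}) = f\,\delta(g f^k) + [\delta, f](g f^k)
\end{equation*}
combined with $[\delta, f] \in D_{\A|\KK}$ having order at most $n - 1$ yields, after dividing by $f^{k+1}$, the finite-difference relation
\begin{equation*}
\Phi_{\delta, g}(k+1) - \Phi_{\delta, g}(k) = f^{-1}\, \Phi_{[\delta, f], g}(k).
\end{equation*}
By the inductive hypothesis the right-hand side is polynomial in $k$ of degree at most $n - 1$, so its antidifference $\Phi_{\delta, g}$, normalized by $\Phi_{\delta, g}(0) = \delta(g)$, is polynomial of degree at most $n$.

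\textbf{Defining the action and checking the axioms.} Once the polynomials $\Phi_{\delta, g}$ exist, declare $\delta \cdot (g\fs) := \Phi_{\delta, g}(s)\, \fs$ and extend by $\A_f[s]$-linearity in the $s$-variable and by compatibility with localization at $f$. Additivity, $\KK$-linearity, and commutation with $s$ are built into the formula. The delicate axiom is the composition rule $(\delta_1 \delta_2) \cdot m = \delta_1 \cdot (\delta_2 \cdot m)$; by the uniqueness argument above it suffices to verify it at every integer specialization $s = k \in \NN$, where it reduces to the known associativity of the $D_{\A|\KK}$-action on $\A_f$. Compatibility with the specialization maps at nonnegative integers is, by construction, built into the definition of $\Phi_{\delta, g}$.

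\textbf{Main obstacle and role of the hypotheses.} The crux of the argument is the polynomial lemma; the remaining checks are formal applications of the polynomial-identity principle. The difficulty is that in the nonregular setting $D_{\A|\KK}$ is not generated by derivations and need not be left- or right-Noetherian, so the inductive commutator calculus must be carried out purely from the Grothendieckian definition via iterated commutators with elements of $\A$. The hypotheses that $\A$ is finitely generated or complete are presumably used to guarantee that $D_{\A|\KK}$ extends cleanly to $\A_f$ and that the order filtration behaves well enough to support the induction; I also expect these hypotheses to matter when propagating the argument to sequences $\seq{f}$ and to the relative Bernstein--Sato setting, where multi-index versions of the finite-difference identity must be handled simultaneously.
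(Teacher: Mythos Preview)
Your proof is correct and takes a genuinely different route from the paper's.

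The paper proceeds by presenting $\A = \R/I$ with $\R$ a polynomial or power series ring (this is where the hypothesis that $\A$ is finitely generated or complete enters), and then \emph{descends} the explicit $D_{\R|\KK}[s]$-module structure on $M^{\R}[\fs]$, defined via partial derivatives, to the quotient. The key step is showing that any $\delta \in D_{\R|\KK}(-\log I)$ preserves $I\,M^{\R}[\fs]$, which is checked by specialization at integers together with \Cref{lemma-evaluation}; the identification $D_{\A|\KK} \cong D_{\R|\KK}(-\log I)/I D_{\R|\KK}$ then gives the action on $M^{\R}[\fs]/I\,M^{\R}[\fs] \cong M^{\A}[\fs]$.

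Your argument is intrinsic: you never choose a presentation, and you build the polynomials $\Phi_{\delta,g}$ directly by induction on the order of $\delta$ via the finite-difference identity coming from the commutator $[\delta,f]$. This is more elementary, and in fact \emph{your argument does not use the finiteness or completeness hypothesis at all}: the commutator calculus, the antidifference step, and the polynomial-identity principle all hold over an arbitrary $\KK$-algebra in characteristic zero (extension of differential operators to a localization is likewise a general fact). So your final paragraph is off the mark---the hypotheses are an artifact of the paper's method of proof, not of the statement. What the paper's approach buys is a direct bridge to the polynomial-ring module $M^{\R}[\fs]$, which is exactly what is needed later to compare $M^{\A}[\fs]$ with $M^{\R}[\fs]$ as a differential direct summand; your approach buys generality and avoids invoking the description~\eqref{opsonquot} of $D_{\A|\KK}$.
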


With this structure in hand, \Cref{functional_formal_special} (see also \Cref{formal_special})  establishes that the formal and the specialized versions of the Bernstein--Sato polynomial coincide.  That is,  the equality $\delta(s) \act f\fs=b(s) \fs$ holds in $M^{\A}[\fs]$ if and only if $\delta(t) \act f^{t+1} = b(t) f^t$ for all $t\in \ZZ$.
In particular, one deduces the existence of a formal Bernstein--Sato functional equation for direct summands of polynomial rings (see \Cref{Exist_BS}).

\Cref{wellDefinedActionOnMR: P} opens the door to the study of the Bernstein--Sato polynomial for a large class of $\KK$-algebras.
To this purpose,  necessary and sufficient conditions for its existence are presented in \Cref{prop_iff}, and examples are presented in which the roots of the Bernstein--Sato polynomial include zero, or even positive rational numbers.

\Cref{section: dds} introduces the class of rings for which we can develop the theory of $V$-filtrations.
When $\A$ is a direct summand of a ring $\T$, any differential operator on $\T$ yields a differential operator on $\A$ after composing with the splitting. On the other hand, we have the notion  of \emph{differentially extensible} rings \cite{BJNB} in which we are provided an ``opposite'' process, since in these rings, every  differential operator on $\A$ can be extended to a differential operator on $\T$.
This work combines both properties in the class of {\it direct summands of polynomial rings that are differentially extensible}, which contains many rings of interest, including:
\begin{itemize}
	\item  many cases of rings of invariants of finite groups (see \Cref{E-finitegroup}),
	\item toric rings (see \Cref{toricODE,toricLDE,toricODELDE}), and
	\item determinantal rings  (see \Cref{detlextl}).
\end{itemize}

When we turn our attention to modules, we consider a slight generalization of the category of differential direct summands  \cite{AMHNB} that allows us to consider $D_{\A|\KK}[{s}]$-modules. Combining this with differential extensibility builds the notion of \emph{differentially extensible summand} (see \Cref{DefDES} for details). In particular, we show in \Cref{MRfs_dds} that the $D_{\A|\KK}[{s}]$-module $M^\A[\fs]$ is a differential direct summand of the $D_{\T|\KK}[{s}]$-module $M^{\T}[\fs]$ and it is also a differentially extensible summand  (\Cref{MRfs_des}).


This fact yields a direct proof of the existence of the formal Bernstein--Sato polynomial for direct summands of a polynomial 
ring.
More importantly, through a slight generalization of a recent result of Musta\c{t}\u{a} \cite{Mustata2019}, we see that the Bernstein--Sato polynomial of a sequence of elements only depends on the ideal generated by these elements, thus extending previous results  \cite{BMS2006a} to this nonregular context.

In \Cref{Vfilt}, we develop the theory of $V$-filtrations for nonregular rings.
We follow the same ideas used in the smooth case, to axiomatically define the notion of a $V$-filtration along an ideal of a Noetherian $\KK$-algebra.
As in the smooth case, we begin by assuming that the ideal defines a smooth subvariety, and it is generated by a collection of variables (see \Cref{DefVfil,VfilM} for details).

A main result of this paper is the existence of $V$-filtrations for differentially extensible summands.

 \begin{theoremx} (\Cref{CorVfilGens})
Let $\R$ be a polynomial ring over a field $\KK$ of characteristic zero. 
Let $\A$ be a $\KK$-subalgebra of $\R$  such that $\A$ is a direct summand of $\R$, and for which the inclusion $\A \subseteq \R$ is differentially extensible.
Let $I$ be an ideal of $\A$, and $M$ a $D_{\A|\KK}$-module that is a differentially extensible summand of a regular holonomic $D_{\R|\KK}$-module $N$ that has quasi-unipotent monodromy.
Then $M$ admits a $V$-filtration along $I$.
\end{theoremx}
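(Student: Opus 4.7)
The plan is to obtain the $V$-filtration on $M$ by descending, via the differential splitting, a $V$-filtration on $N$ along the extended ideal $IR$. First, choose generators $\seq[r]{f}$ of $I \subseteq \A$; these also generate $IR \subseteq \R$. Since $\R$ is a polynomial ring over $\KK$ and $N$ is a regular holonomic $D_{\R|\KK}$-module with quasi-unipotent monodromy, the classical existence theorem for $V$-filtrations along ideals on smooth varieties (Malgrange--Kashiwara in the principal case, extended by Budur--Musta\c{t}\u{a}--Saito to arbitrary ideals via the graph embedding in $\R[\tr]$) applies and produces a $V$-filtration $\{V^\alpha N\}_{\alpha \in \QQ}$ along $\seq[r]{f}$.

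Next, let $\pi \colon N \to M$ be the $D_{\A|\KK}$-linear retraction and $\iota \colon M \hookrightarrow N$ the inclusion witnessing that $M$ is a differentially extensible summand of $N$. Define the candidate filtration by $V^\alpha M \coloneqq \pi(V^\alpha N) = \iota^{-1}(V^\alpha N)$; the equality of these two expressions uses $\pi \circ \iota = \id_M$ together with the fact that the $V^\alpha N$ are stable under the operators that, by differential extensibility, correspond to operators on $\A[\tr]$. I would then verify, one by one, the axioms given in \Cref{DefVfil,VfilM}. The decreasing, exhaustive, and separated properties, as well as the inclusions $f_i \act V^\alpha M \subseteq V^{\alpha+1} M$ and $\pd{t_i} \act V^\alpha M \subseteq V^{\alpha-1} M$, descend formally from the corresponding statements for $V^\bullet N$, because any differential operator on $\A[\tr]$ of the relevant type lifts to a differential operator on $\R[\tr]$ that preserves $V^\bullet N$ by differential extensibility, and because $\pi$ is $D_{\A|\KK}$-linear. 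The nilpotency axiom---that $s + \sum_i t_i \pd{t_i} + \alpha$ acts nilpotently on $\gr^\alpha_V M$---is inherited from the analogous property on $\gr^\alpha_V N$: the operator on $\gr^\alpha_V M$ is the restriction via $\iota$ of the operator on $\gr^\alpha_V N$, so a single uniform power annihilating the latter annihilates the former.

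The main obstacle is the coherence axiom: one must check that each $V^\alpha M$ is finitely generated over the appropriate $V^0$-type subalgebra of $D_{\A[\tr]|\KK}$. Since $V^\alpha N$ is coherent over the corresponding subalgebra of $D_{\R[\tr]|\KK}$, and since $\A$ is a direct summand of $\R$ with compatible differential structures, this coherence has to be transported through $\pi$. The key input will be differential extensibility, which ensures that the $V^0$-subalgebra on the $\A$-side is the image of the $V^0$-subalgebra on the $\R$-side under the splitting of differential operators, together with the fact that direct summands of coherent modules over Noetherian algebras remain coherent once the module structure on the summand is identified. I would handle this by arguing that the machinery of \Cref{section: dds} (in particular that $M^\A[\fs]$ and $\A[\tr]$-based constructions are differentially extensible summands of the corresponding objects over $\R[\tr]$) promotes the $D$-module-level splitting to a splitting compatible with the Rees-type filtration, so that $\pi(V^\alpha N)$ is generated over $V^0 D_{\A[\tr]|\KK}$ by the images of a generating set of $V^\alpha N$.
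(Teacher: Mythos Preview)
Your overall strategy---define the filtration on $M$ by intersecting the $V$-filtration on $N$ with $M$ and then verify the axioms---is exactly what the paper does. However, there are two genuine gaps.

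First, the equality $\pi(V^\alpha N)=\iota^{-1}(V^\alpha N)$ is not justified and is not used in the paper. The differential splitting condition only controls $\Theta(\delta\act v)$ for $v\in M$; it says nothing about $\Theta(n)$ for arbitrary $n\in V^\alpha N$. In particular there is no reason $\pi$ should carry $V^\alpha N$ into $V^\alpha N\cap M$. The paper works exclusively with the intersection $W^\alpha M\coloneqq V^\alpha N\cap M$, and this is what makes the nilpotency and compatibility axioms transparent: one gets an honest injection of associated gradeds $\gr_W^\alpha M\hookrightarrow\gr_V^\alpha N$, which fails for $\pi(V^\alpha N)$. You should drop the $\pi$-image description entirely.

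Second, you invoke the Budur--Musta\c{t}\u{a}--Saito $V$-filtration on $N$ along $\seq{f}$, but the axioms you must verify live over $\A[\seq{t}]$, not $\A$. The paper first passes through the graph embedding: one replaces $M\subseteq N$ by $M'=H^\ell_{\idtminusf}(M[\seq{t}])\subseteq N'=H^\ell_{\idtminusf}(N[\seq{t}])$, and then needs that $M'$ is again a \emph{differentially extensible summand} of $N'$. This is the content of \Cref{PropLcExt}, and it is not automatic---you have not addressed it. Without this step the operators $\pd{t_i}$ you refer to do not act on $M$ at all.

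For coherence, your instinct is right but the mechanism is not ``direct summands of coherent modules remain coherent.'' The paper passes to the Rees module $\mathfrak{W}^\alpha M=\bigoplus_i (W^{\alpha+i}M)w^i$ and shows directly that an ascending chain of $\VV^+ D_{\A[\seq{t}]|\KK}$-submodules stabilizes: extending each chain member to a $\VV^+ D_{\R[\seq{t}]|\KK}$-submodule of $\mathfrak{V}^\alpha N$ gives a chain that stabilizes by Noetherianity of $\VV^+ D_{\R[\seq{t}]|\KK}$ (\Cref{LemmaVAlgebrasNoeth}), and then the differential splitting forces the original chain to stabilize as well. This is where the splitting $\Theta$ is actually used, not in defining the filtration.
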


This result applies, in particular, when  $M$ is $\A$ itself, or any local cohomology module of $\A$. 

As a consequence,  we are able to define Hodge ideals in this singular ambient setting. We also extend known relations of Hodge ideals to minimal exponents in \Cref{Hodge}.

\begin{corollaryx}[\Cref{CorHodge}]
Suppose that $\KK$ has  characteristic zero and $\R$ be a polynomial ring over $\KK$.  Let $\A$ be a $\KK$-subalgebra that is a direct summand of $\R$, for which the inclusion $\A \subseteq \R$ is differentially extensible. 
Then for all $f\in A$ and $\lambda\in \QQ_{\geq 0}$, 
$I_k^A(f^\lambda)$ exists. 
Furthermore,
$$
I^A_k(f^\lambda)=I^R_k(f^\lambda)\, \cap \, A.
$$
\end{corollaryx}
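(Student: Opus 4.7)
The plan is to deduce both assertions from the $V$-filtration machinery developed in \Cref{Vfilt}. In the smooth case, the Hodge ideal $I_k^R(f^\lambda)$ is extracted from the Hodge filtration---and, via Saito's theory, from the $V$-filtration---on a specific regular holonomic $D_{R|\KK}$-module $N$ with quasi-unipotent monodromy attached to the pair $(R,f)$, concretely a twist of $M^R[\fs]$ by the exponent $\lambda$ pulled back along the graph embedding of $f$. I would define $I_k^A(f^\lambda)$ by applying the same procedure to the corresponding $D_{A|\KK}$-module $M$ sitting inside $N$, and then verify both that this procedure makes sense and that it recovers the trace of $I_k^R(f^\lambda)$ on $A$.

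For existence, it suffices to identify $M$ as a differentially extensible summand of $N$. This follows from \Cref{MRfs_dds} and \Cref{MRfs_des}, combined with functoriality of the summand property under the standard constructions entering the definition of Hodge ideals: both the twist by $\lambda$ and the pullback along the graph of $f$ preserve the differential direct summand structure and the extensibility of the inclusion $A\subseteq R$. Then \Cref{CorVfilGens} produces a $V$-filtration on $M$ along the graph ideal, and the usual Hodge-ideal formula applied to this filtration yields a well-defined $I_k^A(f^\lambda)$.

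For the intersection identity I would invoke uniqueness of $V$-filtrations. Writing $V^\bullet N$ for the $V$-filtration on $N$, the trace $(V^\bullet N)\cap M$ is $D_{A|\KK}$-stable (because operators in $D_{A|\KK}$ act through their extensions to $D_{R|\KK}$), remains coherent over the corresponding subring via the splitting $R\twoheadrightarrow A$, and has the required behavior under the Euler-like operator $s$. Hence it satisfies the defining axioms of the $V$-filtration on $M$, and uniqueness forces $V^\bullet M=(V^\bullet N)\cap M$. Inserting this equality into the Hodge-ideal formula---a fixed graded piece intersected with a specific $A$-submodule and then contracted through the splitting---yields $I_k^A(f^\lambda)=I_k^R(f^\lambda)\cap A$.

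The main obstacle I anticipate is ensuring that the construction of $N$ (twist plus graph embedding) respects the differentially extensible summand structure not only at the level of $D$-modules but also compatibly with the Hodge filtration. The $V$-filtration part is essentially formal from axiomatic uniqueness and the splitting; the Hodge filtration, however, is characterized by more delicate properties (good filtrations and strictness coming from Saito's theory of mixed Hodge modules), and one must verify that these descend cleanly to $A$ via the extensibility of $A\subseteq R$. Once this compatibility is in place, the corollary is a direct consequence of \Cref{CorVfilGens}.
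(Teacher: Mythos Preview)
Your high-level instinct---reduce everything to the $V$-filtration and exploit the equality $V^\bullet M=(V^\bullet N)\cap M$ from \Cref{ThmExistenceVfil}---is correct and is exactly what the paper does. But your proposal takes a detour through the Hodge filtration and Saito's theory that the paper avoids entirely, and this detour is the source of the obstacle you yourself flag at the end.

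The paper does not define $I_k^A(f^\lambda)$ by descending the Hodge filtration. Instead, it uses the Musta\c{t}\u{a}--Popa characterization \cite{MPVfil} (\Cref{ThmDefHodgeVfil}) to give a \emph{definition} of $I_k^T(f^\lambda)$ valid for any $\KK$-algebra $T$ admitting a $V$-filtration along $f$: namely, $g\in I_k^T(f^\lambda)$ if and only if there exist $h_0,\dots,h_k\in T$ with $g=\sum_j Q_j(\lambda)f^{k-j}h_j$ and $\sum_j h_j\partial_t^j\,\frac{1}{f-t}\in V^\lambda_{\ideal{t}}H^1_{\ideal{f-t}}(T[t])$. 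This formula involves only the $V$-filtration on the local cohomology module, so existence for $A$ is immediate from \Cref{CorExistVfilEDS}, and the ``good filtrations and strictness'' concerns simply do not arise. Your description of $N$ as a twist of $M^R[\fs]$ is also off; the relevant module is $H^1_{\ideal{f-t}}(R[t])$, the graph direct image, and \Cref{PropLcExt} (not \Cref{MRfs_des}) is what identifies $H^1_{\ideal{f-t}}(A[t])$ as a differentially extensible summand of it.

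For the intersection formula, the paper's argument is concrete rather than an abstract appeal to uniqueness. The containment $I_k^A(f^\lambda)\subseteq I_k^R(f^\lambda)\cap A$ is clear: witnesses $h_j\in A$ are also in $R$, and $V^\lambda M\subseteq V^\lambda N$. For the reverse containment, take $g\in I_k^R(f^\lambda)\cap A$ with witnesses $h_j\in R$. Applying the $A$-linear splitting $\beta\colon R\to A$ to the identity $g=\sum_j Q_j(\lambda)f^{k-j}h_j$ gives $g=\sum_j Q_j(\lambda)f^{k-j}\beta(h_j)$ with $\beta(h_j)\in A$; applying the induced differential splitting $\Theta$ on local cohomology to $\sum_j h_j\partial_t^j\,\frac{1}{f-t}$ yields $\sum_j \beta(h_j)\partial_t^j\,\frac{1}{f-t}$, which lies in $V^\lambda M$. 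This last step is the crux, and your phrase ``contracted through the splitting'' points at it but does not carry it out: one must actually compute that $\Theta$ replaces each coefficient $h_j$ by $\beta(h_j)$, using that $\Theta$ is a differential splitting and that $\partial_t^j\,\frac{1}{f-t}$ already lies in $M$.
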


We also use this new theory of $V$-filtrations in \Cref{Multiplier} to study multiplier ideals in differentially extensible summands.
In particular, we extend results previously obtained only for smooth varieties  \cite{BudurSaito05,BMS2006a}.

\begin{theoremx} (\Cref{ThmMultV})
Let $\R$ be a polynomial ring over $\KK$,  and let $\A\subseteq \R$ be a differentially extensible inclusion of finitely generated $\KK$-algebras, such that $\A$ is a direct summand of $\R$.	
For every ideal $I$ of $A$, and real number $\lambda\geq 0$, the following ideals coincide\textup:
\begin{enumerate}
\item $\cJ_\R((I\R)^\lambda)\cap \A$, 
\item $\bigcup_{\alpha>\lambda}V^\alpha \A$,  where the $V$-filtration is taken along $I$, and 
\item  $\{ g\in \A : \gamma>\lambda \hbox{  if } b^\A_{I,g}(-\gamma)=0\}$.
\end{enumerate}
\end{theoremx}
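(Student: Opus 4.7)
The plan is to leverage the known equivalence in the polynomial ring $\R$, due to Budur--Musta\c{t}\u{a}--Saito, and then transfer each equality down to $\A$ using differential extensibility and the direct summand hypothesis. In the smooth case, we have
\[
\cJ_\R((I\R)^\lambda) \;=\; \bigcup_{\alpha>\lambda} V^\alpha \R \;=\; \{\, h\in \R : \gamma>\lambda \text{ whenever } b^\R_{I\R,h}(-\gamma)=0\,\},
\]
so intersecting with $\A$ produces a single ideal of $\A$. The task is to identify this ideal with items (2) and (3) computed intrinsically in $\A$.

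First I would establish the compatibility $V^\alpha \A = V^\alpha \R \cap \A$ for every $\alpha$, which would immediately give (1)$\Leftrightarrow$(2). Since the $V$-filtration on $\A$ is constructed in \Cref{CorVfilGens} precisely by exploiting the differentially extensible summand structure, the natural candidate for $V^\bullet \A$ is the restriction $V^\bullet \R \cap \A$, and the axioms of a $V$-filtration (\Cref{DefVfil,VfilM}) are preserved under this restriction: the indexing, discreteness, rationality, and local finiteness pass through the splitting $\R\twoheadrightarrow \A$, while the differential operator compatibility uses that every element of $D_{\A|\KK}$ extends to $D_{\R|\KK}$ (differential extensibility) and the fact that $\A$ is closed under applying such extended operators and then projecting. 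Uniqueness of the $V$-filtration then forces the equality $V^\alpha \A = V^\alpha \R \cap \A$.

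Second, for (1)$\Leftrightarrow$(3) I would use that, under differential extensibility, the Bernstein--Sato polynomial $b^\A_{I,g}$ coincides with $b^\R_{I\R,g}$ for every $g\in \A$. This is the natural ideal-theoretic generalization of \cite[Theorem~6.11]{BJNB}: differential extensibility allows any functional equation satisfied by $g$ in $\A$ to be promoted to one in $\R$ (by extending operators), and any functional equation in $\R$ descends to $\A$ by applying the splitting, using that $M^\A[\fs]$ is a differential direct summand of $M^\R[\fs]$ (\Cref{MRfs_dds}) and a differentially extensible summand (\Cref{MRfs_des}). Combined with the recent ideal-theoretic refinement extended from Musta\c{t}\u{a}'s result, this gives equality of the two Bernstein--Sato polynomials, and hence the set in (3) equals the restriction of the analogous set for $\R$ to $\A$.

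Combining these three ingredients with the smooth-case BMS equivalence in $\R$ yields the chain (1)$=$(2)$=$(3). The main obstacle I expect is the first compatibility: one must verify carefully that the restriction $V^\bullet\R\cap \A$ satisfies all the $V$-filtration axioms intrinsically with respect to $D_{\A|\KK}[s]$, in particular the axiom involving the operators $\partial_{t_i}t_i$ (or their ideal-theoretic analogues) acting nilpotently on graded pieces, since this is where the subtle interaction between the splitting, differential extensibility, and the auxiliary variables from the graph embedding comes into play. Once this axiomatic check is complete, the uniqueness of $V$-filtrations does all the remaining work.
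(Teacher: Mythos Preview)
Your proposal is correct and follows essentially the same route as the paper: reduce to the Budur--Musta\c{t}\u{a}--Saito result in $\R$ (\Cref{ThmMultIdealVersions}) via the equalities $b^\A_{I,g}=b^\R_{I\R,g}$ (\Cref{ThmBSEqual}) and $V^\alpha\A=V^\alpha\R\cap\A$. The only minor simplification is that uniqueness is not needed for the second equality, since the $V$-filtration on $\A$ is \emph{constructed} as this intersection in \Cref{ThmExistenceVfil}, where the axiomatic verification you anticipate is already carried out.
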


We study whether the equality $\cJ_\A(I^\lambda)=\cJ_\R((I\R)^\lambda)\cap \A$ holds, and appeal to  
positive characteristic methods to address this question.  It is known that  via reduction to positive characteristic, multiplier ideals reduce to test ideals  when $\A$ has KLT singularities \cite{CEMS}. We replace the notion of differential extensibility with the parallel notion of \emph{Cartier extensibility} (see \Cref{Def_Cartier_ext}) for rings of positive characteristic, and obtain the following comparison results.

\begin{theoremx}(\Cref{PropTestIdealRetraction}, \Cref{theorem-multiplier}) 
	\begin{enumerate}
		\item Let $\A\subseteq \R$ be an  extension of rings of positive characteristic, such that $\R$ is a regular $F$-finite domain and $\A$ is a Cartier extensible direct summand of $\R$.
		Then for every ideal $I$ of $\A$, and every real number $\lambda \geq 0$, 
		\[\ti{\A}(I^\lambda)=\ti{\R}((I\R)^\lambda)\cap \A.\] 
		\item Let $\R$ be a polynomial ring over a field $\KK$ of characteristic zero, and let $\A\subseteq \R$ be a differentially  extensible  inclusion, such that $\A$ is a direct summand of $\R$ and that $\A$ is finitely generated over $\KK$ with KLT singularities.
		Suppose that the reduction modulo $p$ is also Cartier extensible for each prime $p\gg 0$. Then for every ideal $I$ of $\A$ and every real number $\lambda \geq 0$, 
		\[
		\cJ_\A (I^\lambda)=\cJ_\R((I\R)^\lambda)\cap \A.
		\]
		\item Under the hypotheses of part 2 above, one has equalities \[\cJ_\A(I^{\lambda})= \bigcup_{\alpha>\lambda}V^\alpha \A = \{ g\in \A : \gamma > \lambda \hbox{  if } b^\A_{I,g}(-\gamma)=0\},\] where the $V$-filtration is taken along $I$.
	\end{enumerate}
   
\end{theoremx}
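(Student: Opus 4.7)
The plan is to tackle the three claims in order, with the first supplying the technical core.

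\emph{Part 1.} I would use the Cartier-operator description of the test ideal in a regular $F$-finite domain: $\ti{\R}(\ideala^\lambda)$ is the sum over $e\geq 0$ and over $p^{-e}$-linear maps $\phi\colon \R\to \R$ of the images $\phi(F^e_*\ideala^{\lceil\lambda p^e\rceil})$. Fix an $\A$-linear splitting $\pi\colon \R\to \A$. For the inclusion $\ti{\R}((I\R)^\lambda)\cap \A\subseteq \ti{\A}(I^\lambda)$, I would observe that $\pi\circ\phi$ restricted to $F^e_*\A\hookrightarrow F^e_*\R$ is a $p^{-e}$-linear map $\A\to \A$, hence a Cartier operator on $\A$. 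For any $x\in\ti{\R}((I\R)^\lambda)\cap \A$, writing $x=\pi(x)$ and expanding through the above sum shows $x\in\ti{\A}(I^\lambda)$. For the reverse inclusion, Cartier extensibility extends every Cartier operator $\psi$ on $\A$ to $\tilde\psi$ on $\R$, so that $\psi(F^e_* I^{\lceil\lambda p^e\rceil})=\tilde\psi(F^e_* I^{\lceil\lambda p^e\rceil})\subseteq\ti{\R}((I\R)^\lambda)\cap \A$, and summing over $e$ and $\psi$ gives $\ti{\A}(I^\lambda)\subseteq\ti{\R}((I\R)^\lambda)\cap \A$.

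\emph{Part 2.} I would reduce modulo primes $p\gg 0$. The KLT assumption on $\A$ together with \cite{CEMS} yields $\cJ_\A(I^\lambda)\otimes\FF_p=\ti{\A_p}(I_p^\lambda)$ for almost all $p$, and the analogous identity for the smooth $\R$ is classical. The hypothesis that each reduction $\A_p\subseteq\R_p$ is Cartier extensible lets part~1 apply, giving $\ti{\A_p}(I_p^\lambda)=\ti{\R_p}((I_p\R_p)^\lambda)\cap\A_p$. A standard spreading-out argument shows intersections with $\A$ commute with reduction mod $p\gg 0$, so passing back to characteristic zero yields $\cJ_\A(I^\lambda)=\cJ_\R((I\R)^\lambda)\cap\A$.

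\emph{Part 3.} This follows by chaining part~2 with the earlier \Cref{ThmMultV}, which identifies $\cJ_\R((I\R)^\lambda)\cap \A$ with $\bigcup_{\alpha>\lambda}V^\alpha \A$ and with $\{g\in \A : \gamma>\lambda \text{ whenever } b^\A_{I,g}(-\gamma)=0\}$.

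The main obstacle I expect lies in part~2: the spreading-out step must simultaneously preserve KLT (so multiplier ideals reduce to test ideals on both sides), Cartier extensibility (so part~1 applies to each $\A_p\subseteq\R_p$), and commutation of intersection with reduction, for a cofinite set of primes. Part~1 is cleaner, being a duality between the splitting $\pi$ (which pushes Cartier operators down from $\R$ to $\A$) and Cartier extensibility (which lifts them back up); one still needs to verify that \Cref{Def_Cartier_ext} indeed extends operators in the form required. Part~3 is then essentially bookkeeping.
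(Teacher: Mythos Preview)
Your proposal is correct and follows essentially the same route as the paper. For Part~1, the paper cites \cite[Proposition~4.9]{AMHNB} for the inclusion $\ti{\R}((I\R)^\lambda)\cap \A\subseteq \ti{\A}(I^\lambda)$ rather than spelling out the splitting argument you give, and proves the Cartier-extensibility direction exactly as you do; for Part~2 the paper carries out your spreading-out step element by element (show $f_s\in[\cJ_\A(I^\lambda)]_s\Rightarrow f_s\in\ti{\A_s}(I_s^\lambda)\Rightarrow f_s\in\ti{\R_s}((I\R_s)^\lambda)\Rightarrow f_s\in[\cJ_\R((I\R)^\lambda)]_s$ for $s$ in a dense open, and conversely), which is precisely the content of the commutation you flag; Part~3 is the same chaining with \Cref{ThmMultV}. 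One small point: your Cartier-operator formula for $\ti{}$ is stated for regular rings, but you also apply it to $\A$; this is fine because $\A$, as a direct summand of a regular ring, is strongly $F$-regular, and the paper uses exactly that description.
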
 

In particular, this partially answers an open question regarding jumping numbers \cite[Question 4.15]{AMHNB} for the rings considered here.
Moreover, in \Cref{theorem-jn-roots}, we relate jumping numbers with the roots of the Bernstein--Sato polynomials, thus extending results in smooth varieties  \cite{ELSV2004,BudurSaito05,BMS2006a} to this nonregular setting.

The hypothesis of Cartier extensibility after reduction modulo a prime may not be necessary, as \Cref{ex_det} shows.
Finally, we point out that a similar comparison result \cite[Theorem~1.1]{MR2255178} was shown for $V$-filtrations and multiplier ideals in the case of an inclusion of a smooth divisor in a smooth variety. There are also analogous results to \Cref{PropTestIdealRetraction} for test ideals in special cases  \cite{MR1873384,MR3205587}.

\begin{setup}\label{setupS2}
	Throughout the paper, we  generally use the following notation.
	\begin{itemize}
		\item $\KK$ is a field,
		\item $\A$ and $\T$ denote Noetherian commutative rings containing $\KK$, and
 		\item $\R$ is a polynomial ring over $\KK$ in indeterminates $\seq[d]{x}$.
	\end{itemize}
\end{setup}
\noindent For the most part, the statements of lemmas, propositions, and theorems are self-contained, and do not rely on the conventions above, besides the fact that $\mathbb{K}$ \emph{always denotes a field}.

\section{The Bernstein--Sato functional equation}\label{SecDirectSummand}

\subsection{Rings of differential operators}

We start this section recalling some notions from the theory of rings of differential operators, as introduced  by Grothendieck  \cite[\textsection16.8]{EGA}.

A \emph{differential operator of order zero} on a ring $\T$ is defined by the multiplication by an element $s \in \T$.
Given an integer $m \geq 1$, a \emph{differential operator of order at most $m$} is an additive map  $\delta\in \Hom_\ZZ(\T,\T)$  such that the commutator $[\delta,s]=\delta s-s\delta$ is a differential operator of order at most $m-1$ for each $s\in \T$.
The set consisting of all differential operators of order at most $m$ is denoted by $D_{\T}^m$.

Sums and compositions of differential operators are themselves differential operators, hence the differential operators form a subring $D_\T$ of $\Hom_\ZZ(\T,\T)$ that admits a filtration
\[
   D^{0}_{\T} \subseteq D^{1}_{\T} \subseteq \cdots \subseteq \bigcup_{m\in\NN}D^{m}_{\T} = D_{\T}.
\]
We also consider the subring $D_{\T |\KK} \subseteq D_{\T}$ of  \emph{$\KK$-linear differential operators}.

For a polynomial or power series ring $\R$ in the variables $\seq[d]{x}$ over a field $\KK$, one has equalities \cite[\textsection16.11.2]{EGA}
\[ D^n_{\R|\KK} = \R \left\langle \frac{\partial^{a_1}_{x_1}}{a_1!} \cdots \frac{\partial^{a_d}_{x_d}}{a_d!} : a_1 + \cdots +a_d \leq n \right\rangle,\]
where the operator $\frac{\partial^{a_i}_{x_i}}{a_i!}$ is characterized by rule 
\[\frac{\partial^{a_i}_{x_i}}{a_i!}(x_1^{b_1} \cdots x_d^{b_d}) =  \binom{b_i}{a_i}  x_1^{b_1} \cdots x_i^{b_i-a_i} \cdots x_d^{b_d}.\]
If $\KK$ has characteristic zero, this agrees with the familiar description 
\[D^n_{\R|\KK} = \R \left\langle \partial^{a_1}_{x_1} \cdots \partial^{a_d}_{x_d} : a_1 + \cdots +a_d \leq n \right\rangle,\] where $\partial^{a_i}_{x_i} = a_i! \frac{\partial^{a_i}_{x_i}}{a_i!}$ is the usual iterated partial derivative operator.

Let $\R$ be a polynomial ring over a field $\KK$, and $\A = \R/ I$ for some ideal $I$  of $\R$.
The ring of $\KK$-linear differential operators of $\A$ has been described in terms of the $\KK$-linear differentials operators in $\R$ \cite[Theorem~15.5.13]{McC_Rob} (see also \cite{Milicic,Moncada}).
Namely, we have
\begin{equation}\label{opsonquot}
   D_{\A|\KK} \cong \frac{D_{\R|\KK}(-\log I)}{I D_{\R|\KK}},
\end{equation}
where $D_{\R|\KK}(-\log I) \coloneqq  \{ \delta\in D_{\R|\KK} : \delta\act I\subseteq I\}$ and the map in the ``left'' direction corresponds to restriction. The same results also hold when $\R$ is a formal power series ring.
We point out that the order of the differential operators is preserved;  that is, if $D^n_{\R|\KK}(-\log I) \coloneqq  \{ \delta\in D^n_{\R|\KK} : \delta\act I\subseteq I\}$, then we have
\begin{equation}\label{opsonquot2}
   D^n_{\A|\KK} \cong \frac{D^n_{\R|\KK}(-\log I)}{I D^n_{\R|\KK}}.
\end{equation}

When  $\KK$ has characteristic $p>0$,  every additive map is $\ZZ/p\ZZ$-linear, and thus $D_{\T}=D_{\T|(\ZZ/p\ZZ)}$.
Moreover, let $\T^{p^e} \subseteq \T$ be the subring consisting of the $p^{e}$-th powers of all elements of $\T$, and set $D^{(e)}_{\T}\coloneqq \Hom_{\T^{p^e}}(\T,\T)$.
In this case,
   \[D_{\T|\KK} \subseteq D_{\T} \subseteq \bigcup_{e\in\NN}D^{(e)}_{\T}.\]
We have  $D_{\T|\KK} = D_{\T}$ whenever $\KK$ is a perfect field.
On the other hand  $D_{\T} = \bigcup_{e\in\NN}D^{(e)}_{\T}$  when $\T$ is $F$-finite, that is, when $\T$ is finitely generated as a $\T^{p^e}$-module for some (equivalently, all) $e>0$; see \cite[Theorem~2.7]{SmithSP} and \cite[Theorem~1.4.9]{yekutieli.explicit_construction}. 

The ring structure and module theory of rings of differential operators is well understood when $\T$ is either a polynomial ring or a formal power series ring over a field, but much less is known when $\T$ is not a regular ring. As usual, a module over a ring of differential operators will mean a left module.

\subsection{The Bernstein--Sato functional equation for polynomial and power series rings} \label{SubSec Background B-S smooth}

In this subsection, we retain \Cref{setupS2}, with the additional assumption that \emph{$\KK$ has characteristic zero}.

Consider $D_{\R|\KK}[s]$, the polynomial ring in a new variable $s$ over the ring of $\KK$-linear differential operators $D_{\R|\KK}$.  
For every $f\in \R$, there exist $\delta(s)\in D_{\R|\KK}[s]$ and a nonzero polynomial $b(s)\in \KK[s]$ such that the following \emph{functional equation} is satisfied:
\begin{equation}\label{eq: BS functional eq for single poly}
\delta(s)\act f\fs=b(s) \fs.
\end{equation}

We note that this equation can be interpreted in two different senses. 
First, we may view it as a family of equations inside the module $\R_f$, indexed by $s\in \ZZ$; that is, one has $\delta(t) \act f^{t+1} = b(t) f^t$ in $\R_f$ for all $t\in \ZZ$.
We temporarily say that the polynomial $b(s)$ satisfies the \emph{specialized} functional equation in this case.
Later on, in \Cref{formal_special}, we prove that this interpretation is equivalent to the \emph{a priori} stronger interpretation of \eqref{eq: BS functional eq for single poly} as an equality in a $D$-module
\[M^\R[\fs]\coloneqq \R_f [s] \fs,\]
which is the free rank-one $\R_{f}[s]$-module generated by the formal symbol $\fs$. We say that the polynomial $b(s)$ satisfies the functional equation \emph{formally} in this case. A key point is that $M^\R[\fs]$  has a $D_{\R|\KK}[s]$-module structure given by the action of the partial derivatives as follows: for $h\in \R_f[s]$, we have
\[
\pd{x_r}[h \boldsymbol{f^s}] = 
\left( \fpd{x_r}[h] + s h f^{-1}\fpd{x_r}[f]\right)
\boldsymbol{f^s}.
\]
Henceforth in this subsection, we consider the functional equation formally.
The collection of all polynomials $b(s)$ satisfying a functional equation as in \eqref{eq: BS functional eq for single poly}, for some $\delta(s)\in D_{\R|\KK}[s]$, is an ideal in $\KK[s]$, and the unique monic generator of this ideal is known as the \emph{Bernstein--Sato polynomial} associated to $f$; we denote this polynomial as $b^\R_f(s)$.  
Alternatively, the Bernstein--Sato polynomial $b^\R_f(s)$ is the monic polynomial of smallest degree among those polynomials $b(s)$ such that $b(s)  \fs $ lies in the $D_{\R|\KK}[s]$-submodule of $M^\R[\fs]$ generated by $f  \fs$.

The existence of $b^\R_f(s)$ in the case that $\R$ is a polynomial ring is due to Bernstein \cite{Bernstein}, and this polynomial coincides with the \emph{$b$-function} in the theory of prehomogeneous vector spaces developed by Sato \cite{MR1086566, MR595585}.
Bj\"ork established the existence  of $b^\R_f(s)$ when $\R$ is a power series ring \cite{Bjork74, Bjork79}.
A fundamental property of the Bernstein--Sato polynomial is that its roots are rational numbers, and consequently $b^\R_f(s)\in \QQ[s]$, in the case that $\R$ is either a polynomial ring or a ring of convergent power series.  
This was proved by Malgrange for elements $f$ such that $\R / f\R$ has an isolated singularity \cite{Mal74, BernsteinRationalMalgrange}, and by Kashiwara in general \cite{KashiwaraRationality}.

 Budur, Musta\c{t}\u{a}, and Saito have extended the notion of a Bernstein--Sato polynomial to be associated to an \emph{ideal} in a polynomial ring $\R$ \cite{BMS2006a}, and we work in this more general framework. We point out that recent work of Musta\c{t}\u{a} \cite{Mustata2019} implies that this construction is also valid for formal power series rings. 


\begin{definition}[The module {$M^\R[\fsl]$}]\label{moduleMSFSL}
	Fix $\seq{f}=\seq[\ell]{f}\in \R$.
	Now, given indeterminates $\seq{s}=\seq[\ell]{s}$, we define the free rank-one $\R_{f_1\cdots f_\ell}[\seq{s}]$-module
	\[M^\R[\fsl]\coloneqq \R_{f_1\cdots f_\ell}[\seq{s}] \, \fsll,\]
	where $\fsll$ is a formal symbol for the generator.
	
	The $\R_{f_1\cdots f_\ell}[\seq{s}]$-module $M^\R[\fsl]$ has a natural  $D_{\R|\KK}[\seq{s}]$-module structure (and so also a $D_{\R|\KK}$-module structure), defined as follows:
	for $h\in \R_{f_1\cdots f_\ell}$, set
	\begin{equation}\label{MSFSstructure}
	\pd{x_r}[h \fsll]=\left( \fpd{x_r}[h] + h \sum^\ell_{i=1}s_i  f_i^{-1} \fpd{x_r}[f_i] \right)\,  \fsll.
	\end{equation}
	Since $D_{\R|\KK}[\seq{s}]$ is generated by $\R$, $\seq[\ell]{s}$, and the partial derivatives $\pd{x_1},\ldots,\pd{x_\ell}$, this gives a recipe for an action by any element of $D_{\R|\KK}[\seq{s}]$.
\end{definition}

\begin{definition}[The exponent specialization map $\varphi_{\pt{t}}$ on {$M^\R[\fsl]$}]\label{exponent-specialization}
	The \emph{exponent specialization map} on $M^\R[\fsl]$ associated to $\pt{t}=\pt[\ell]{t} \in \ZZ^\ell$ 
	is the $\R_{f_1\cdots f_\ell}$-linear map
	\[ \varphi_{\pt{t}}\colon M^\R[\fsl] \to \R_{f_1\cdots f_\ell}\]
	given by $\varphi_{\pt{t}}(\fsll)=f^{t_1}_1\cdots f^{t_\ell}_\ell$, and $\varphi_{\pt{t}}(s_i)=t_i$.
\end{definition}

For each $\pt{t}\in \ZZ^\ell$, $\delta \in D_{\R|\KK} [\seq{s}]$, and $v\in M^\R[\fsl]$ we have
\begin{equation} \label{specializationCompatibility: e}
\varphi_{\pt{t}}(\delta  \act v)=\delta(\pt{t}) \act \varphi_{\pt{t}}(v),
\end{equation}
showing, in particular, that $\varphi_{\pt{t}}$ is $D_{\R|\KK}$-linear.
Indeed, the verification of this equality reduces to the case of $\delta=\pd{x_r}$, in which case it follows from~\eqref{MSFSstructure}.

\begin{definition}[Bernstein--Sato functional equation for polynomial rings]
	Fix $\seq{f}=\seq[\ell]{f}\in \R$.
	For an integer $m \geq 0$, let $\binom{s_i}{m}$ denote $s_i(s_i-1)\cdots(s_i-m+1)/m! \in \KK[\seq{s}]$.
	There exists a nonzero polynomial $b(s) \in \QQ[s]$ for which $b(s_1+\cdots+s_\ell) \fsll$ is in the $D_{\R|\KK}[\seq{s}]$-submodule of $M^\R[\fsl]$ generated by the elements
	\[
	\prod_{\crampedclap{i\in\nsupp(\pt{c})}}\quad \binom{s_i}{-c_i} f^{c_1}_1\cdots f^{c_\ell}_\ell \fsll,
	\]
	where $\pt{c}=\pt[\ell]{c}$ runs over the elements of $\ZZ^\ell$ for which $|\pt{c}|\coloneqq c_1+\cdots+c_\ell=1$, and $\nsupp(\pt{c})\coloneqq\{i : c_i< 0 \}$.
	Equivalently, there exist $\delta_{\pt{c}} \in D_{\R|\KK}[\seq{s}]$, with $\delta_{\pt{c}} = 0$ for all but finitely many $\pt{c}$, for which the following \emph{functional equation} holds:
	\begin{equation}\label{FunctionalEquationIdeals}
	\begin{split}
	\sum_{\crampedclap{\substack{\pt{c} \in \ZZ^\ell \\ |\pt{c}|=1} }} \delta_{\pt{c}}\, \act\ \prod_{\crampedclap{i\in\nsupp(\pt{c})}}\quad  \binom{s_i}{-c_i} f^{c_1}_1\cdots f^{c_\ell}_\ell \fsll  =b(s_1+\cdots+s_\ell)\fsll.
	\end{split}
	\end{equation}  
\end{definition}


\begin{definition}[Bernstein--Sato polynomial associated to an ideal in a polynomial ring]\label{BSpolyringsideal}
	Let $I$ denote the ideal generated by $\seq{f}=\seq[\ell]{f}\in \R$.
	The \emph{Bernstein--Sato polynomial} $b^\R_I(s)$ is the unique monic polynomial $b(s)\in \QQ[s]$ of smallest degree satisfying the functional equation \eqref{FunctionalEquationIdeals}, for some $\delta_{\pt{c}} \in D_{\R|\KK}[\seq{s}]$.
\end{definition}

The Bernstein--Sato polynomial of $I$ does not depend on the choice of generators of $I$ \cite[Theorem~2.5]{BMS2006a}.
For a principal ideal $I = \ideal{f}$, $b^\R_I(s)$ agrees with $b^\R_f(s)$, and in general, like $b^\R_f(s)$, all roots of $b^\R_I(s)$ are negative rational numbers \cite{BMS2006a}.

\begin{example}
	Let $\R=\CC[x,y,z]$, $f_1=xy$, and $f_2=xz$. There is a functional equation for $\seq{f}=f_1,f_2$ involving the generators corresponding to the vectors $\pt{c}=(1,0)$ and $(0,1)$.
	Namely,
	\[
	\pd{x}\pd{y}[f_1 \boldsymbol{f_1^{s_1}}\boldsymbol{f_2^{s_2}}] + \pd{x}\pd{z}[f_2 \boldsymbol{f_1^{s_1}} \boldsymbol{f_2^{s_2}}]
	= (s_1 + s_2 +1)(s_1 + s_2 +2) \boldsymbol{f_1^{s_1}} \boldsymbol{f_2^{s_2}},
	\]
	so the Bernstein--Sato polynomial $b_{\ideal{xy,xz}}^{\R}$ divides $(s+1)(s+2)$.
	Using the methods from \cite[Section~4.3]{BMS2006a}, one can show that equality holds here.
\end{example}

\begin{example}
	Let $\R=\CC[x,y,z]$, $f_1 = xz^2$, and $f_2=yz^3$. There is a functional equation for $\seq{f}=f_1,f_2$ using the vectors $c=(0,1)$, $(1,0)$, $(2,-1)$, and $(3,-2)$:
	\begin{align*}
	\begin{split}
	\delta_1\act f_2\boldsymbol{f_1^{s_1}} \boldsymbol{f_2^{s_2}}
	+\delta_2\act f_1\boldsymbol{f_1^{s_1}} \boldsymbol{f_2^{s_2}}
	&+\delta_3\act \Big({s_2\atop 1}\Big)f_1^2f_2^{-1}\boldsymbol{f_1^{s_1}} \boldsymbol{f_2^{s_2}}
	+\delta_4\act \Big({s_2\atop 2}\Big)f_1^3f_2^{-2}\boldsymbol{f_1^{s_1}} \boldsymbol{f_2^{s_2}}
	\\ &= b(s_1+s_2)\boldsymbol{f_1^{s_1}}\boldsymbol{f_2^{s_2}},
	\end{split}
	\end{align*}
	where
	\begin{align*}
	b(s) &= (s+1)^2 (s+2) (s+\tfrac12) (s+\tfrac23) (s+\tfrac43),\\
	\delta_1 &= \tfrac{1}{2592}(-66 - 66 s_1 + 31 s_2 + 79 s_1 s_2 + 96 s_2^2)\,\pd{y}\pd{z}^3,\\
	\delta_2 &=
	\tfrac{1}{2592}(1350 + 3300 s_1 + 2592 s_1^2 + 648 s_1^3 + 3315 s_2 + 5128 s_1 s_2 + \\ 
	& \qquad\qquad 1944 s_1^2 s_2 + 2684 s_2^2 + 2114 s_1 s_2^2 + 915 s_2^3)\,\pd{x}\pd{z}^2,
	\\
	\delta_3 &= \tfrac{1}{1296}(-156 - 132 s_1 + 59 s_2 + 158 s_1 s_2 + 192 s_2^2)\,y\,\pd{x}^2\pd{z},\\
	\delta_4 &= \tfrac{1}{108}(3 - s_2)\,y^2\,\pd{x}^3.\\
	\end{align*}
        Using the methods from \cite[Section~4.3]{BMS2006a}, one can show that $b(s)$ is the Bernstein--Sato polynomial of $\seq{f}=f_1,f_2$.
\end{example}

The following generalization of \Cref{BSpolyringsideal} is useful for applications to birational geometry.

\begin{definition}[Relative Bernstein--Sato polynomial in a polynomial ring]\label{bspairpoly}
	Fix an ideal $I$ generated by $\seq[\ell]{f} \in \R$, and an element $g \in \R$.
	The \emph{Bernstein--Sato polynomial of $I$ relative to $g$}, denoted $b_{I,g}^\R(s)$,  is the unique monic polynomial $b(s)$ of the smallest degree satisfying the equation obtained from \eqref{FunctionalEquationIdeals}, after replacing $\fsll$ with $g \fsll$ on both sides.
\end{definition}

As with \Cref{BSpolyringsideal}, it is a fact that this notion is well defined (see \cite[Remark~2.3]{BudurNotes} and \cite[Section~2.10]{BMS2006a}). 
Moreover, if $g=1$, then $b_{I,g}^\R(s) = b_I^\R(s)$.

\subsection{The Bernstein--Sato functional equation for nonregular rings}\label{ss: Bernstein--Sato for nonregular rings} 

We now proceed to define a module analogous to the one considered in  \Cref{moduleMSFSL} and functional equations as considered in \Cref{BSpolyringsideal} for more general rings. Again, we may consider functional equations either in the \emph{specialized} or \emph{formal} sense, as in the previous subsection. The papers \cite{AMHNB} and \cite{HsiaoMatusevich} study Bernstein--Sato polynomials in the specialized sense; that is, the monic polynomial of smallest degree among those polynomials $b(s)$ for which there exists $\delta(s)\in D_{\R|\KK}[s]$ such that $\delta(t) \act f^{t+1} = b(t) f^t$  for all $t\in \ZZ$. Here, we develop a framework for formal Bernstein--Sato polynomials. The following object plays a central role in this theory.

\begin{definition}[{${M^{\A}[\fsl]}$} as an {$\A_{f_1\cdots f_\ell}[\seq{s}]$}-module]
	Let $\A$ be a $\KK$-algebra.
	Given $\seq{f}=\seq[\ell]{f}\in \A$, we define
	\[M^\A[\fsl] \coloneqq \A_{f_1\cdots f_\ell}[\seq{s}] \fsll\]
	as an $\A_{f_1\cdots f_\ell}[\seq{s}]$-module.
	Moreover, for $\pt{t}=\pt[\ell]{t} \in \ZZ^\ell$, we define the exponent specialization map $\varphi_{\pt{t}}\colon M^\A[\fsl]\to \A_{f_1 \cdots f_\ell}$ in the manner directly analogous to that defined in  \Cref{exponent-specialization}.
\end{definition}

We want to endow $M^\A[\fsl]$ with a $D_{\A|\KK}[\seq{s}]$-module structure that is compatible with the $D_{\A|\KK}$-module structure on $\A_{f_1 \cdots f_\ell}$ via the exponent specialization maps.
In general, this allows for at most one such structure, as \Cref{wellDefinedActionOnMR: P} shows. 
The following lemma is a key observation in establishing this.
Although this is likely a well-known result, we include its simple proof, for its centrality in our constructions, and for lack of an appropriate reference.

\begin{lemma}\label{lemma-evaluation}
   Let $\T$ be an algebra over a field of characteristic zero, and let $h\in \T[\seq[\ell]{s}]$.
   Suppose $h$ has degree at most $m$ in each variable.
   Then $h$ vanishes on the set $\mathcal{A}=\{0,1, \ldots, m\}^\ell$ if and only if $h = 0$.
\end{lemma}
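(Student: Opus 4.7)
The backward direction is immediate, so I will focus on proving that vanishing on $\mathcal{A}$ forces $h = 0$. My plan is to proceed by induction on $\ell$, with the base case handled by a Vandermonde determinant argument and the inductive step by treating $h$ as a polynomial in one variable at a time.

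For the base case $\ell = 1$, write $h = \sum_{j=0}^m a_j s_1^j$ with $a_j \in \T$. The hypothesis that $h(t) = 0$ for $t = 0, 1, \ldots, m$ amounts to the linear system $V \pt{a} = 0$, where $V$ is the Vandermonde-type matrix with entries $V_{tj} = t^j$ for $t, j \in \{0, 1, \ldots, m\}$ and $\pt{a} = (a_0, \ldots, a_m)^{\top}$. Its determinant is the classical Vandermonde product $\prod_{0 \le i < j \le m}(j-i)$, which is a positive integer. Since $\T$ contains a field of characteristic zero, this integer is a unit in $\T$, hence $V$ is invertible over $\T$ and we conclude $a_j = 0$ for all $j$.

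For the inductive step, assume the result for $\ell - 1$ variables and write $h = \sum_{j=0}^m h_j(s_1, \ldots, s_{\ell-1}) \, s_\ell^j$ with each $h_j \in \T[s_1, \ldots, s_{\ell-1}]$ of degree at most $m$ in each variable. Fix any $\pt{t'} = (t_1, \ldots, t_{\ell-1}) \in \{0, 1, \ldots, m\}^{\ell-1}$. The specialization $h(\pt{t'}, s_\ell) = \sum_{j=0}^m h_j(\pt{t'}) \, s_\ell^j$ is a polynomial in $s_\ell$ over $\T$ of degree at most $m$ that vanishes at $s_\ell = 0, 1, \ldots, m$ by hypothesis, so the base case gives $h_j(\pt{t'}) = 0$ for every $j$ and every such $\pt{t'}$. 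Thus each $h_j$ vanishes on $\{0, 1, \ldots, m\}^{\ell-1}$, and the inductive hypothesis forces $h_j = 0$, hence $h = 0$.

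There is no substantive obstacle here; the only subtle point is ensuring that the Vandermonde determinant is a unit in $\T$, for which the characteristic zero hypothesis on the ground field $\KK \subseteq \T$ is exactly what is needed. No properties of $\T$ beyond this are used, so the lemma holds for arbitrary $\KK$-algebras $\T$.
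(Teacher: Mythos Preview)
Your proof is correct. The overall structure matches the paper's: both argue by induction on~$\ell$, reducing the multivariable case to the single-variable one by specializing all but one variable. The only genuine difference is in the base case $\ell=1$: you invoke the invertibility of the Vandermonde matrix (its determinant being a positive integer, hence a unit over a characteristic-zero base), whereas the paper instead runs a nested induction on the degree~$m$, factoring $h=(s-m)g$ and using that $a-m$ is a unit for $a\in\{0,\ldots,m-1\}$. Your Vandermonde argument is arguably cleaner and avoids the second induction; the paper's factoring argument is slightly more elementary in that it does not rely on the Vandermonde determinant formula. Both rely on the characteristic-zero hypothesis in exactly the same way, namely to guarantee that certain nonzero integers are units in~$\T$.
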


\begin{proof}
   We prove the ``only if'' implication, starting with polynomials in one variable.
   The result is clear for $m=0$, so suppose $m$ is positive and the result holds for one-variable polynomials of degree strictly less than $m$.
   Suppose $h\in \T[s]$ has degree at most $m$ and vanishes on $\{0,\ldots,m\}$.
   Then $h=(s-m)g$, for some $g\in \T[s]$ of degree strictly less than $m$.
   Since $h(a)=0$ and $a-m$ is a unit in $\T$ for each $a\in \{0,\ldots,m-1\}$, it follows that $g(a)=0$ for all such $a$.
   By our induction hypothesis, $g=0$, and consequently $h=0$.

   Now suppose the result holds for polynomials in $\ell-1$ variables, and suppose $h\in \T[\seq[\ell]{s}]$ is nonzero and has degree at most $m$ in each variable.
   Without loss of generality, we assume that $m$ is positive and the indeterminate $s_\ell$ effectively appears in $h$, and write
   \[
      h = h_0 + h_1 s_\ell + \cdots + h_n s_\ell^n,
   \]
   with $h_i\in \T[\seq[\ell-1]{s}]$, for $i=0,\ldots,n$, and $h_n\ne 0$.
   By our induction hypothesis, there exist $a_1,\ldots,a_{\ell-1}\in \{0,\ldots,m\}$ such that $h_n(a_1,\ldots,a_{\ell-1})\ne 0$.
   We conclude that $h(a_1,\ldots,a_{\ell-1},s_\ell)$ is a nonzero polynomial, and there exists $a_\ell\in \{0,\ldots, m\}$ such that $h(a_1,\ldots,a_{\ell-1},a_\ell)\ne 0$. 
\end{proof}

\begin{corollary}\label{corollary-evaluation}
   Under the assumptions of \Cref{lemma-evaluation}, the polynomial $h$ has coefficients in $\QQ[h(\mathcal{A})]\subseteq \QQ[h(\ZZ^\ell)]$.
\end{corollary}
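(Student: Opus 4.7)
The plan is to reconstruct $h$ explicitly from its values on $\mathcal{A}$ by multivariate Lagrange interpolation over $\mathbb{Q}$, and then read off the coefficients. Since the interpolation nodes $\{0,1,\ldots,m\}$ are rational and the interpolation basis polynomials have rational coefficients, this will exhibit the coefficients of $h$ as $\mathbb{Q}$-linear combinations of the values $h(\mathcal{A})$, which is stronger than what the statement requires.

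Concretely, the first step is to introduce, for each $a\in\{0,1,\ldots,m\}$, the univariate Lagrange basis polynomial
\[
L_{a}(s) \;=\; \prod_{\substack{0\leq b\leq m \\ b\neq a}} \frac{s-b}{a-b} \;\in\; \mathbb{Q}[s],
\]
which has degree exactly $m$ and satisfies $L_{a}(b) = \delta_{a,b}$ for $b\in\{0,\ldots,m\}$. Then I would form the candidate polynomial
\[
\tilde{h}(\seq{s}) \;=\; \sum_{(a_1,\ldots,a_\ell)\in\mathcal{A}} h(a_1,\ldots,a_\ell)\,\prod_{i=1}^{\ell} L_{a_i}(s_i) \;\in\; T[\seq[\ell]{s}],
\]
which has degree at most $m$ in each variable and, by the Kronecker property of the $L_{a}$, agrees with $h$ on every point of $\mathcal{A}$. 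The difference $h-\tilde{h}$ therefore has degree at most $m$ in each variable and vanishes on $\mathcal{A}$, so \Cref{lemma-evaluation} forces $h = \tilde{h}$.

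With this equality in hand, the final step is simply to expand each product $\prod_{i} L_{a_i}(s_i)$ in the monomial basis of $T[\seq[\ell]{s}]$ and collect terms. Since each $L_{a_i}(s_i)$ has coefficients in $\mathbb{Q}$, every monomial coefficient of $h$ obtained this way is a $\mathbb{Q}$-linear combination of the finitely many values $\{h(\pt{a}):\pt{a}\in\mathcal{A}\}$. Hence each coefficient of $h$ lies in $\mathbb{Q}[h(\mathcal{A})]$; the inclusion $\mathbb{Q}[h(\mathcal{A})]\subseteq\mathbb{Q}[h(\mathbb{Z}^\ell)]$ is immediate from $\mathcal{A}\subseteq\mathbb{Z}^\ell$.

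I do not anticipate a serious obstacle: the only subtlety is making sure that the interpolation identity $h=\tilde h$ is justified in the ring $T[\seq{s}]$ rather than over a field, but this is exactly what \Cref{lemma-evaluation} supplies, since that lemma holds for polynomials with coefficients in an arbitrary characteristic-zero $\mathbb{K}$-algebra. (One could alternatively argue via iterated finite differences, which also yields $\mathbb{Q}$-coefficients, but the Lagrange formula gives the cleanest explicit witness.)
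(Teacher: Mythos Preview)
Your proposal is correct and is essentially identical to the paper's own proof: the paper writes down the same multivariate Lagrange interpolation formula $h=\sum_{a\in\mathcal{A}} h(a)\prod_{i}\prod_{j\neq a_i}\frac{s_i-j}{a_i-j}$ and justifies it by applying \Cref{lemma-evaluation} to the difference of the two sides. Your version simply spells out the Lagrange basis polynomials and the coefficient-reading step in more detail.
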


\begin{proof}
   This follows from the identity
   \[
      h=\sum_{a\in\mathcal{A}} h(a)\  \prod_{i=1}^\ell\  \prod_{\substack{0\le j\le m\\ j\ne a_i}} \frac{s_i-j}{ a_i-j },
   \]
   which is obtained by applying \Cref{lemma-evaluation} to the difference between the left- and right-hand sides.
\end{proof}

Our next technical lemma deals with the behaviour of differential operators with respect to localization. We remark that,
 given $\delta \in D_{\T|\KK}$ and $g\in \T$, there is a unique $\delta_g\in D_{\T_g|\KK}$ that extends $\delta$ \cite[Theorem~2.2.10]{Masson}.

\begin{lemma}\label{lemma-logloc}
    Given a $\KK$-algebra  $\T$, fix $g \in \T$ and an ideal $I$ of $\T$. Then every element of $D_{\T | \KK}(-\log I)$, when considered as an element of $D_{\T_g | \KK}$ by localization, is in $D_{\T_g | \KK}(-\log I \T_{g})$.
\end{lemma}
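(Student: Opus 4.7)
The plan is to prove the statement by induction on the order $m$ of $\delta \in D^m_{\T|\KK}(-\log I)$. The base case $m=0$ is immediate: such a $\delta$ is multiplication by some $r \in \T$, and its localization is again multiplication by $r$ on $\T_g$, which preserves the ideal $I\T_g$.

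For the inductive step, I will combine two preliminary observations. First, for any $a \in \T$, the commutator $[\delta,a] = \delta a - a\delta \in D^{m-1}_{\T|\KK}$ still lies in $D^{m-1}_{\T|\KK}(-\log I)$: indeed, for $x \in I$, both $\delta(ax)$ and $a\delta(x)$ belong to $I$ (since $ax \in I$ and $\delta \act I \subseteq I$), hence $[\delta,a](x) \in I$. Second, since the extension to $D_{\T_g|\KK}$ is unique, and both $[\delta_g, a]$ and $[\delta,a]_g$ belong to $D_{\T_g|\KK}$ and agree with $[\delta,a]$ on $\T$, we must have $[\delta_g, a] = [\delta,a]_g$ for every $a \in \T$.

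Since $I\T_g$ is generated as an additive group by elements of the form $x/g^n$ with $x \in I$ and $n \geq 0$, it suffices to show that $\delta_g(x/g^n) \in I\T_g$ for every such $x$ and $n$. Applying $\delta_g$ to the identity $g^n \cdot (x/g^n) = x$, and using the commutator relation $\delta_g \circ g^n = g^n \circ \delta_g + [\delta_g, g^n]$, one obtains
\[
\delta(x) \;=\; \delta_g(x) \;=\; g^n\, \delta_g(x/g^n) \,+\, [\delta_g, g^n](x/g^n).
\]
By the preliminary observations, $[\delta_g, g^n] = [\delta, g^n]_g$ with $[\delta, g^n] \in D^{m-1}_{\T|\KK}(-\log I)$, so the inductive hypothesis yields $[\delta_g, g^n](x/g^n) \in I\T_g$. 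Combined with $\delta(x) \in I \subseteq I\T_g$, this gives $g^n\, \delta_g(x/g^n) \in I\T_g$, and since $g$ is a unit in $\T_g$, we conclude $\delta_g(x/g^n) \in I\T_g$.

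The main step to get right is the commutator identity and the uniqueness argument that identifies $[\delta_g, g^n]$ with the localization of $[\delta, g^n]$; no other serious obstacle arises, since this reduces the inductive claim about $\delta$ to a claim about the lower-order operator $[\delta, g^n]$ already in $D^{m-1}_{\T|\KK}(-\log I)$.
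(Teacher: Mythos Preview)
Your proof is correct and follows essentially the same approach as the paper: induction on the order, the base case of multiplication operators, the observation that $[\delta,h]\in D^{m-1}_{\T|\KK}(-\log I)$, and the commutator identity applied to $g^n \cdot (x/g^n) = x$ to reduce to lower order. Your explicit justification that $[\delta_g,g^n]=[\delta,g^n]_g$ via uniqueness of the extension is a nice touch that the paper leaves implicit.
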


\begin{proof} First observe that if $h\in \T$ and $\delta \in D^n_{\T | \KK}(-\log I)$ for some $n \geq 1$, then for all $a\in I$, $[\delta,h]\act a=\delta\act (ha)-h\delta \act a$ is again in $I$.  Thus, $[\delta,h]\in D^{n-1}_{\T | \KK}(-\log I)$. 
	   
   For elements of $D_{\T | \KK}(-\log I)$ of order zero, the statement clearly holds.  Proceeding by induction on the order of a differential operator, fix $n \geq 1$ and $\delta \in D^n_{\T | \KK}(-\log I)$, and suppose that the claim holds for all operators in $D^{n-1}_{\T | \KK}(-\log I)$. 
   Since $ \delta \circ g^t = g^t \circ \delta + [\delta,g^t]$, it follows that for $a \in I$ and $t \geq 0$,
   \[ \delta \act a = \delta\act\left(g^t\cdot \frac{a}{g^t}\right) = g^t \delta\act \frac{a}{g^t} + [\delta,g^t] \act\frac{a}{g^t}, \quad \text{so} \quad \delta\act\frac{a}{g^t} = \frac{\delta\act a - [\delta,g^t]\act\frac{a}{g^t}}{g^t}.\]
Notice that $[\delta,g^t]$ is in $D^{n-1}_{\T | \KK}(-\log I)$ by our initial observation, so that it is also in $D_{\T_g | \KK}(-\log I \T_{g})$ by the inductive hypothesis.  
Finally, since $\delta\act a \in I$ by our choice of $\delta$, we conclude that $\delta\act \frac{a}{g^t} \in I \T_g$. 
\end{proof}

\begin{theorem} \label{wellDefinedActionOnMR: P}
   Let $\A$ be
   an algebra over a field $\KK$ of characteristic zero.
   Suppose that either $\A$ is finitely generated over $\KK$, or that $\A$ is complete.
   Given $\seq{f}=\seq[\ell]{f}\in \A$,  there is a unique $D_{\A|\KK}[\seq{s}]$-module structure on $M^\A[\fsl]$ with the following property\textup:
   for all $v\in M^\A[\fsl]$, $\delta  \in D_{\A|\KK}[\seq{s}]$, and $t\in \ZZ^{\ell}$, 
	\[\varphi_{\pt{t}}(\delta \act v) = \delta(\pt{t}) \act \varphi_{\pt{t}}(v). \]
\end{theorem}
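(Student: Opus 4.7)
The strategy is to handle uniqueness via the rigidity from Lemma~\ref{lemma-evaluation}, and to establish existence by realizing $\A$ as a quotient of a smooth ambient ring and descending the $D_{\R|\KK}[\seq{s}]$-action of Definition~\ref{moduleMSFSL}.

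For uniqueness, suppose two $D_{\A|\KK}[\seq{s}]$-module structures $\cdot_1$ and $\cdot_2$ on $M^\A[\fsl]$ both satisfy the specialization compatibility. For any $\delta \in D_{\A|\KK}[\seq{s}]$ and $v \in M^\A[\fsl]$, the difference $\delta\cdot_1 v - \delta\cdot_2 v$ has the form $g(\seq{s})\,\fsll$ for a unique $g \in \A_{f_1\cdots f_\ell}[\seq{s}]$, and the compatibility gives $g(\pt{t})\,f_1^{t_1}\cdots f_\ell^{t_\ell} = 0$ in $\A_{f_1\cdots f_\ell}$ for every $\pt{t} \in \ZZ^\ell$. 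Since each $f_i$ is a unit in $\A_{f_1\cdots f_\ell}$, this forces $g(\pt{t}) = 0$ for every $\pt{t}$, and Lemma~\ref{lemma-evaluation} then gives $g = 0$.

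For existence, write $\A = \R/I$ with $\R$ a polynomial ring (if $\A$ is finitely generated) or a formal power series ring (if $\A$ is complete), and choose lifts $\widetilde f_i \in \R$ of the $f_i$. By \eqref{opsonquot} and flatness of $\KK[\seq{s}]$ over $\KK$,
\[
D_{\A|\KK}[\seq{s}] \,\cong\, \frac{D_{\R|\KK}(-\log I)[\seq{s}]}{(I\,D_{\R|\KK})[\seq{s}]}.
\]
Denote by $\widetilde M$ the module of Definition~\ref{moduleMSFSL} built from the $\widetilde f_i$, with formal generator $e$. The reduction $\R_{\widetilde f_1 \cdots \widetilde f_\ell}\to \A_{f_1 \cdots f_\ell}$ induces a surjection $\pi\colon \widetilde M \twoheadrightarrow M^\A[\fsl]$ sending $e \mapsto \fsll$, whose kernel is $I\widetilde M \coloneqq I\,\R_{\widetilde f_1 \cdots \widetilde f_\ell}[\seq{s}]\cdot e$. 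The plan is to show that the $D_{\R|\KK}[\seq{s}]$-action on $\widetilde M$, when restricted to $D_{\R|\KK}(-\log I)[\seq{s}]$, preserves $I\widetilde M$, while $(I\,D_{\R|\KK})[\seq{s}]$ maps all of $\widetilde M$ into $I\widetilde M$; the latter is immediate from $\R$-linearity of the action. These two facts let the action descend to give $M^\A[\fsl]$ a $D_{\A|\KK}[\seq{s}]$-module structure, and the required specialization compatibility then follows from \eqref{specializationCompatibility: e} together with the compatibility of $\pi$ with the $\varphi_{\pt{t}}$.

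The principal obstacle is the preservation of $I\widetilde M$ by $D_{\R|\KK}(-\log I)[\seq{s}]$. A direct induction on the order of $\delta$ via the Leibniz identity $\delta(a\,w) = a\,(\delta\act w) + [\delta,a]\act w$ does not close up, since $[\delta,a]\act w$ for a general $w \in \widetilde M$ need not lie in $I\widetilde M$. The plan is to sidestep this by going through all integer specializations: given $\delta \in D_{\R|\KK}(-\log I)[\seq{s}]$ and $v = g(\seq{s})\,e \in I\widetilde M$ with $g \in I\R_{\widetilde f_1 \cdots \widetilde f_\ell}[\seq{s}]$, write $\delta\act v = g'(\seq{s})\,e$. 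For each $\pt{t} \in \ZZ^\ell$, \eqref{specializationCompatibility: e} on $\widetilde M$ yields
\[
g'(\pt{t})\,\widetilde f_1^{t_1}\cdots\widetilde f_\ell^{t_\ell} \,=\, \delta(\pt{t})\act\bigl(g(\pt{t})\,\widetilde f_1^{t_1}\cdots\widetilde f_\ell^{t_\ell}\bigr),
\]
and by Lemma~\ref{lemma-logloc} the extension of $\delta(\pt{t})$ to $\R_{\widetilde f_1 \cdots \widetilde f_\ell}$ preserves $I\R_{\widetilde f_1 \cdots \widetilde f_\ell}$, so the right-hand side lies in that ideal. Since $\widetilde f_1^{t_1}\cdots\widetilde f_\ell^{t_\ell}$ is a unit, $g'(\pt{t}) \in I\R_{\widetilde f_1 \cdots \widetilde f_\ell}$ for every $\pt{t} \in \ZZ^\ell$, and Lemma~\ref{lemma-evaluation} applied to the image of $g'$ in $(\R_{\widetilde f_1 \cdots \widetilde f_\ell}/I\R_{\widetilde f_1 \cdots \widetilde f_\ell})[\seq{s}]$ then yields $g' \in I\R_{\widetilde f_1 \cdots \widetilde f_\ell}[\seq{s}]$, so $\delta\act v \in I\widetilde M$, as required.
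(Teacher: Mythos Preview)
Your proposal is correct and follows essentially the same approach as the paper: present $\A$ as $\R/I$ for $\R$ a polynomial or power series ring, use \eqref{specializationCompatibility: e} together with Lemma~\ref{lemma-logloc} and Lemma~\ref{lemma-evaluation} to show that $D_{\R|\KK}(-\log I)[\seq{s}]$ preserves $I M^\R[\fsl]$, then descend via~\eqref{opsonquot}, and handle uniqueness by the same Lemma~\ref{lemma-evaluation} argument. Your explicit remark that a direct Leibniz induction does not close up, forcing the detour through integer specializations, is a nice clarification that the paper leaves implicit.
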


\begin{proof}
		First write $\A=\R /I$ for some polynomial or power series ring $\R $ over $\KK$, and $I$ a radical ideal  $\R$. 
		We claim that if $\delta \in D_{\R|\KK}(-\log I)[\seq{s}]$, then 
				\begin{equation} \label{delta-I: e}
		 \delta  \act I M^\R[\fsl] \subseteq I M^\R[\fsl].
		\end{equation}
Toward \eqref{delta-I: e}, fix $a \in I \R_{f_1\cdots f_\ell}[\seq{s}]$, so that $a \fsl \in I M^{\R}[\fsl]$. Observe that for $t\in \ZZ^{\ell}$, $\delta(t)\in D_{\R|\KK}(-\log I)$ and $a(t) \in I \R_{f_1\cdots f_\ell}$, so that $\delta(t) \in D_{\R_{f_1\cdots f_\ell}|\KK}(-\log I \R_{f_1\cdots f_\ell})$ by \Cref{lemma-logloc}. Therefore, 
$\varphi_t(\delta \act a \fsl) = \delta(t) \act a(t) f_1^{t_1} \cdots f_{\ell}^{t_{\ell}}$ is in $I \R_{f_1\cdots f_\ell}$. Writing $\delta \act a \fsl = b \fsl$ for some $b\in \R_{f_1\cdots f_\ell}[\seq{s}]$, consider the image of $b$ in the polynomial ring over the
$\KK$-algebra $\A_{f_1\cdots f_\ell} = (\R / I)_{f_1\cdots f_\ell}$. As a polynomial over this ring, our work thus far shows that $b$ takes the value zero for all $t\in \ZZ^{\ell}$, and hence is the zero polynomial by \Cref{lemma-evaluation}. That is, $\delta \act a \fsl \in I \R_{f_1 \cdots f_{\ell}}[\seq{s}] \fsl$, proving \eqref{delta-I: e}.
		
	 Note also that if $\delta \in I D_{\R|\KK}$, then $\delta \act M^\R[\fsl] \subseteq I M^\R[\fsl]$. Now, using  \eqref{opsonquot}, we obtain a $D_{\A|\KK}[\seq{s}]$-module structure on $M^\R[\fsl]/ I M^\R[\fsl] \cong M^\A[\fsl]$ that is compatible with the specialization maps.
		
		To see the uniqueness of this structure, let $\club$ and $\spade$ denote two $D_{\A|\KK}[\seq{s}]$-actions that satisfy the given property.
		Fix $v$ and  $\delta$ as in the statement, and then fix $a, b\in \A_{f_1\cdots f_\ell}[\seq{s}]$ for which
		\[
		\delta \club v = a\, \fsll \text{ and } \delta \spade v = b\, \fsll.
		\]
		Since, by our hypothesis,
		\[
		a(\pt{t})f_1^{t_1}\cdots f_\ell^{t_\ell}=\varphi_{\pt{t}}(\delta \club v) = \delta(\pt{t}) \act \varphi_{\pt{t}}(v)=\varphi_{\pt{t}}(\delta \spade v)=b(\pt{t})f_1^{t_1}\cdots f_\ell^{t_\ell}
		\]
		for every $\pt{t}=\pt[\ell]{t}\in \ZZ^\ell$, we have that $a(\pt{t})=b(\pt{t})$ for every $\pt{t}\in\ZZ^\ell$.
		\Cref{lemma-evaluation} then allows us to conclude that $a=b$, and the claim follows. 
\end{proof} 

\begin{convention}\label{convention: D-module structure}
   When referring to a $D_{\A|\KK}[\seq{s}]$-module structure on $M^\A[\fsl]$, we use the unique $D_{\A|\KK}[\seq{s}]$-module structure compatible with specialization described in \Cref{wellDefinedActionOnMR: P}.   
\end{convention}

From \Cref{wellDefinedActionOnMR: P}, we obtain a suitable $D$-module in which to judge the existence of a Bernstein--Sato polynomial in the formal sense. We show now that the notions of formal and specialized Bernstein--Sato polynomials are equivalent.

\begin{proposition}\label{functional_formal_special}
   Let $\A$ be
   an algebra over a field $\KK$ of characteristic zero.
   Suppose that $\A$ is either finitely generated over $\KK$, or that $\A$ is complete.
   Then for $f\in \A$, the equality $\delta \act f\fs=b(s) \fs$ holds in $M^{\A}[\fs]$ if and only if $\delta(t) \act f^{t+1} = b(t) f^t$ for all $t\in \ZZ$. 
\end{proposition}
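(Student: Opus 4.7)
The plan is to prove the two implications separately, using the compatibility of the specialization maps $\varphi_{t}$ established in \Cref{wellDefinedActionOnMR: P} in one direction, and \Cref{lemma-evaluation} in the other.

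For the forward implication, suppose $\delta \act f \fs = b(s) \fs$ holds formally in $M^{\A}[\fs]$. Applying the exponent specialization map $\varphi_{t}$ for each $t \in \ZZ$ and using the intertwining property $\varphi_{t}(\delta \act v) = \delta(t) \act \varphi_{t}(v)$ from \Cref{wellDefinedActionOnMR: P}, I get
\[
\delta(t) \act f^{t+1} = \varphi_{t}(\delta \act f \fs) = \varphi_{t}(b(s) \fs) = b(t) f^{t}
\]
in $\A_{f}$, which is the specialized equation.

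For the reverse implication, suppose $\delta(t) \act f^{t+1} = b(t) f^{t}$ in $\A_f$ for every $t \in \ZZ$. Write $\delta \act f \fs = h(s) \fs$ for some $h(s) \in \A_{f}[s]$, which we may do because $M^{\A}[\fs]$ is a free rank-one $\A_{f}[s]$-module on the symbol $\fs$. Specializing at any $t \in \ZZ$ and using \Cref{wellDefinedActionOnMR: P} once more yields
\[
h(t) f^{t} = \varphi_{t}(\delta \act f \fs) = \delta(t) \act f^{t+1} = b(t) f^{t},
\]
so $h(t) = b(t)$ in $\A_{f}$ for every $t \in \ZZ$. Thus the polynomial $h(s) - b(s) \in \A_{f}[s]$ vanishes on all of $\ZZ$. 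Taking $m$ to exceed its degree, it vanishes in particular on $\{0, 1, \ldots, m\}$, so \Cref{lemma-evaluation} (applied to $\T = \A_{f}$, which contains the characteristic-zero field $\KK$) forces $h(s) = b(s)$, and hence $\delta \act f \fs = b(s) \fs$ formally in $M^{\A}[\fs]$.

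The only genuinely nontrivial ingredient is the polynomial vanishing lemma, and that is already proved earlier; the main point of the proposition is really that the $D_{\A|\KK}[s]$-structure from \Cref{wellDefinedActionOnMR: P} has been set up so that the specialization maps are $D_{\A|\KK}$-linear in the precise form needed. I do not anticipate any serious obstacle beyond carefully invoking the existence of the structure and its compatibility with $\varphi_{t}$.
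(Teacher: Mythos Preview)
Your proof is correct and follows essentially the same approach as the paper's: apply the specialization maps $\varphi_t$ and the compatibility from \Cref{wellDefinedActionOnMR: P} for the forward direction, and for the converse write the difference as $a\,\fs$ with $a\in \A_f[s]$, specialize to see $a(t)f^t=0$ (hence $a(t)=0$ since $f^t$ is a unit in $\A_f$), and conclude $a=0$ via \Cref{lemma-evaluation}. The only cosmetic difference is that the paper works directly with $a=h-b$ rather than comparing $h$ and $b$ separately.
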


\begin{proof}
   If $\delta \act f\fs=b(s) \fs$ holds in $M^{\A}[\fs]$, we can apply the map $\varphi_t$ to both sides to obtain $\delta(t) \act f^{t+1} = b(t) f^t$ for any $t$.
   Conversely, suppose that $\delta(t) \act f^{t+1} = b(t) f^t$ for all $t$.
   We then have $\varphi_t (\delta \act f \fs) = \varphi_t( b(s) \fs)$ for all $t$.
   Writing $(\delta f - b(s)) \act \fs = a \fs$ for some $a\in \A_{f}[s]$, we have $a(t)=\varphi_t(a)=0$ for all $t$, so $a=0$ by \Cref{lemma-evaluation}.
   It follows that $\delta \act f \fs = b(s) \fs$ in $M^{\A}[\fs]$.
\end{proof}

An analogous (omitted) proof also holds for the general case we present next.

\begin{proposition}\label{functional_formal_special_ideal}
   Let $\A$ be
   an algebra over a field $\KK$ of characteristic zero. Suppose that $\A$ is either finitely generated over $\KK$, or that $\A$ is complete. Fix $\seq{f}=\seq[\ell]{f}\in \A$.
   The equality
   \[
      \sum_{\crampedclap{\substack{\pt{c} \in \ZZ^\ell \\ |\pt{c}|=1} }} \delta_{\pt{c}}\, \act\ \prod_{\crampedclap{i\in\nsupp(\pt{c})}}\quad  \binom{s_i}{-c_i} f^{c_1}_1\cdots f^{c_\ell}_\ell \fsll  =b(s_1+\cdots+s_\ell)\fsll
   \]
   holds in $M^{\A}[\fsl]$ if and only if
   \[
      \sum_{\crampedclap{\substack{\pt{c} \in \ZZ^\ell \\ |\pt{c}|=1} }} \delta_{\pt{c}}(\pt{t})\, \act\ \prod_{\crampedclap{i\in\nsupp(\pt{c})}}\quad  \binom{t_i}{-c_i} f^{t_1+c_1}_1\cdots f^{t_\ell + c_\ell}_\ell  =b(t_1+\cdots+t_\ell) f^{t_1}_1\cdots f^{t_\ell}_\ell
   \]
   for all $\pt{t}=\pt[\ell]{t}\in \ZZ^\ell$.
   \qed
\end{proposition}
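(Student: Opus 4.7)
The plan is to mirror the proof of \Cref{functional_formal_special} verbatim in the $\ell$-variable setting, using the multivariate form of \Cref{lemma-evaluation}. The two ingredients that made the single-element argument work are both already in place: the canonical $D_{\A|\KK}[\seq{s}]$-module structure on $M^{\A}[\fsl]$ from \Cref{wellDefinedActionOnMR: P}, together with the exponent specialization maps $\varphi_{\pt{t}}$ of \Cref{exponent-specialization}, which satisfy the compatibility relation \eqref{specializationCompatibility: e}, namely $\varphi_{\pt{t}}(\delta\act v)=\delta(\pt{t})\act\varphi_{\pt{t}}(v)$ for all $\pt{t}\in\ZZ^\ell$, $\delta\in D_{\A|\KK}[\seq{s}]$, and $v\in M^{\A}[\fsl]$.

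For the forward direction, I would apply $\varphi_{\pt{t}}$ to both sides of the formal equation, one $\pt{t}\in\ZZ^\ell$ at a time. The central factors $\binom{s_i}{-c_i}\in\KK[\seq{s}]$ specialize to $\binom{t_i}{-c_i}$, the symbol $\fsll$ becomes $f_1^{t_1}\cdots f_\ell^{t_\ell}$, and by compatibility each operator $\delta_{\pt{c}}$ acts through its specialization $\delta_{\pt{c}}(\pt{t})$. Term by term, the image of $\binom{s_i}{-c_i}f_1^{c_1}\cdots f_\ell^{c_\ell}\fsll$ under $\varphi_{\pt{t}}$ is exactly $\binom{t_i}{-c_i}f_1^{t_1+c_1}\cdots f_\ell^{t_\ell+c_\ell}$, producing the claimed specialized identity in $\A_{f_1\cdots f_\ell}$.

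For the converse, let $u\in M^{\A}[\fsl]$ be the difference of the two sides of the formal equation, and write $u=a\,\fsll$ for a unique $a\in\A_{f_1\cdots f_\ell}[\seq{s}]$. By the computation carried out in the forward direction, the specialized hypothesis says precisely that
\[
\varphi_{\pt{t}}(u)\;=\;a(\pt{t})\,f_1^{t_1}\cdots f_\ell^{t_\ell}\;=\;0
\]
for every $\pt{t}\in\ZZ^\ell$. Since $f_1\cdots f_\ell$ is a unit in $\A_{f_1\cdots f_\ell}$, this forces $a(\pt{t})=0$ for all $\pt{t}\in\ZZ^\ell$. \Cref{lemma-evaluation}, applied over the ring $\A_{f_1\cdots f_\ell}$ to a cube $\{0,1,\ldots,m\}^\ell$ with $m$ bounding the degree of $a$ in each variable, then yields $a=0$, hence $u=0$.

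I do not expect any real obstacle here; the argument is a direct generalization of the $\ell=1$ case, which is no doubt why the authors are content to omit it. The only point deserving a second of care is that the prefactors $\binom{s_i}{-c_i}$ lie in the center of $D_{\A|\KK}[\seq{s}]$, so they pose no issue with either the $D$-module action guaranteed by \Cref{wellDefinedActionOnMR: P} or with specialization. Everything else is bookkeeping about exponents.
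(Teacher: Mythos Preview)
Your proposal is correct and is exactly the analogous argument the paper alludes to when it says the proof is omitted: apply $\varphi_{\pt{t}}$ for the forward direction, and for the converse write the difference as $a\,\fsll$ and use \Cref{lemma-evaluation} to conclude $a=0$. There are no gaps.
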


\begin{corollary} \label{formal_special}
   Let $\A$ be
   an algebra over a field $\KK$ of characteristic zero. Suppose that $\A$ is either finitely generated over $\KK$, or that $\A$ is complete. Fix $\seq{f}=\seq[\ell]{f}\in \A$. Then there exists a formal Bernstein--Sato polynomial for $\seq{f}$ if and only if there exists a specialized Bernstein--Sato polynomial for $\seq{f}$, and in this case, these two polynomials agree.
   \qed
\end{corollary}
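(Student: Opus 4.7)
The plan is to deduce this corollary directly from \Cref{functional_formal_special_ideal}, which is the ``per-operator'' version of the statement. Concretely, both the formal and the specialized Bernstein--Sato polynomials of $\seq{f}$ are defined as unique monic generators of an ideal in $\QQ[s]$ (or $\KK[s]$): the formal one is generated by those $b(s)$ for which there exist operators $\delta_{\pt{c}} \in D_{\A|\KK}[\seq{s}]$ (almost all zero) making the formal functional equation hold in $M^{\A}[\fsl]$, and the specialized one is generated by those $b(s)$ for which there exist such $\delta_{\pt{c}}$ making the family of specialized equations hold for every $\pt{t} \in \ZZ^{\ell}$.

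First, I would observe that these ideals are well defined: existence of one element in each set suffices to produce a monic generator, since the sets are closed under multiplication by elements of $\QQ[s]$ (one just multiplies the corresponding $\delta_{\pt{c}}$ by the same polynomial in $\seq{s}$). Next, I would invoke \Cref{functional_formal_special_ideal}: for a fixed choice of data $(b(s), \{\delta_{\pt{c}}\})$, the formal equality in $M^{\A}[\fsl]$ is equivalent to the collection of specialized equalities indexed by $\pt{t} \in \ZZ^{\ell}$. This means that the set of admissible $b(s)$ is literally the same on both sides, so the two ideals coincide. Consequently, the formal Bernstein--Sato polynomial exists if and only if the specialized one does, and when they exist, they are the same monic generator.

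The only delicate point is that the $\delta_{\pt{c}}$ witnessing the existence of $b(s)$ need not be the same for the formal and specialized versions \emph{a priori} — one could imagine a situation where the specialized equation is satisfied with one choice of operators and the formal one requires a different choice. But \Cref{functional_formal_special_ideal} removes this concern by giving the equivalence pointwise in $(b, \{\delta_{\pt{c}}\})$, so no minimization argument over operators is required. Since the author marks this corollary with \qed, no further work is needed beyond citing the previous proposition; my proposal is essentially a one-line reduction to \Cref{functional_formal_special_ideal}.
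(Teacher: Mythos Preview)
Your proposal is correct and matches the paper's approach exactly: the corollary is marked \qed\ immediately after \Cref{functional_formal_special_ideal}, so the intended argument is precisely the one-line reduction you describe, namely that the proposition gives a pointwise equivalence on pairs $(b,\{\delta_{\pt{c}}\})$, forcing the two ideals of admissible $b(s)$ to coincide.
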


The existence of the specialized Bernstein--Sato polynomial for direct summands of polynomial 
rings was recently proven \cite{AMHNB}. Thus we can extend their result to the formal setting.

\begin{theorem}[{\cite[Theorem~3.14]{AMHNB}}] \label{Exist_BS}
Let $\A$ be a finitely generated $\KK$-subalgebra of a polynomial 
ring $\R$, such that $\A$ is a direct summand of $\R$. Then the Bernstein--Sato polynomial $b_{\seq{f}}^{\A}(s)$ exists for any $\seq{f}=\seq[\ell]{f}\in \A$.  Moreover $b_{\seq{f}}^{\A}(s)$
divides $b_{\seq{f}}^{\R}(s)$.
\qed
\end{theorem}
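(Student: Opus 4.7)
The plan is to reduce this theorem to its specialized counterpart in \cite[Theorem~3.14]{AMHNB} and then lift to the formal setting via the equivalence established earlier in this section. Since $\A$ is a finitely generated $\KK$-subalgebra of $\R$, both $\A$ and $\R$ are finitely generated $\KK$-algebras, and the characteristic-zero assumption is in force throughout this section. Hence \Cref{functional_formal_special_ideal} applies to each of them, so that a monic polynomial $b(s)$ satisfies the formal functional equation of \Cref{BSpolyringsideal} in $M^{\A}[\fsl]$ (respectively, in $M^{\R}[\fsl]$) if and only if the specialized family of equations holds for every exponent $\pt{t} \in \ZZ^\ell$.

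For existence, I would invoke \cite[Theorem~3.14]{AMHNB} to obtain a nonzero monic $b(s) \in \QQ[s]$ and operators $\delta_{\pt{c}} \in D_{\A|\KK}[\seq{s}]$ that witness the specialized Bernstein--Sato functional equation for $\seq{f}$ on $\A$. Applying \Cref{functional_formal_special_ideal} to $\A$, this same data yields a formal functional equation in $M^{\A}[\fsl]$. The set of polynomials $b(s) \in \KK[s]$ admitting such a formal equation is then a nonzero ideal of $\KK[s]$, and its unique monic generator is, by definition, $b_{\seq{f}}^{\A}(s)$.

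For divisibility, \cite[Theorem~3.14]{AMHNB} already asserts that the specialized Bernstein--Sato polynomial of $\seq{f}$ on $\A$ divides the one on $\R$. Applying \Cref{functional_formal_special_ideal} to each side identifies these specialized Bernstein--Sato polynomials with their formal counterparts, for both $\A$ and $\R$; the desired divisibility $b_{\seq{f}}^{\A}(s) \mid b_{\seq{f}}^{\R}(s)$ then follows.

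The substantive content of this argument is not the manipulations above, but the preceding construction of the $D_{\A|\KK}[\seq{s}]$-module structure on $M^{\A}[\fsl]$ carried out in \Cref{wellDefinedActionOnMR: P}. Without that structure, there is no $D$-module ambient in which one can even formulate a formal functional equation over the nonregular ring $\A$; once it is in place and fixed by \Cref{convention: D-module structure}, the existing specialized result of \cite{AMHNB} transports to the formal statement with essentially no further input. The main obstacle is therefore conceptual, namely ensuring that the formal and specialized notions truly coincide under our hypotheses, and this is precisely what Propositions~\ref{functional_formal_special} and \ref{functional_formal_special_ideal} provide.
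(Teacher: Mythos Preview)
Your proposal is correct and matches the paper's own approach: the theorem is stated with a \qed and is deduced, via \Cref{formal_special} (equivalently \Cref{functional_formal_special_ideal}), from the specialized result \cite[Theorem~3.14]{AMHNB}, exactly as you outline. The paper later gives an alternative direct proof in \Cref{ThmBSVarieties} using the differential splitting, but for \Cref{Exist_BS} itself the argument is precisely the reduction you describe.
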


The question whether the Bernstein--Sato polynomial $b_{\seq{f}}^{\A}(s)$ only depends on the ideal generated by $\seq{f}$ was not considered in the initial work on direct summands \cite{AMHNB}.
We address this issue in \Cref{BS_des}.

\subsection{Some existence results and examples of Bernstein--Sato polynomials}
A sufficient condition for the existence of  Bernstein--Sato polynomials in the polynomial ring case is the fact that $M^{\R}[\fs] \otimes_{\KK[s]} \KK(s)$ is a $D$-module of finite length.
This result was extended to the case of differentiably admissible $\KK$-algebras \cite{MNM91, NBRingsDifType}.
In general, for the case of $\KK$-algebras, we provide necessary and sufficient conditions for the existence of Bernstein--Sato polynomials.

\begin{proposition} \label{prop_iff}
   Let $\A$ be
   an algebra over a field $\KK$ of characteristic zero.
   Suppose that $\A$ is either finitely generated over $\KK$, or that $\A$ is complete.
   Fix an element $f\in \A$.
   Then, the following are equivalent\textup:
	\begin{enumerate}
		\item There exists a Bernstein--Sato polynomial for $f$.
		\item $M^{\A}[\fs] \otimes_{\KK[s]} \KK(s)$ is a finitely-generated $D_{\A|\KK}[s] \otimes_{\KK[s]} \KK(s)$-module.
		\item $M^{\A}[\fs] \otimes_{\KK[s]} \KK(s)$ is generated by $\fs$ as a $D_{\A|\KK}[s] \otimes_{\KK[s]} \KK(s)$-module.
	\end{enumerate}
\end{proposition}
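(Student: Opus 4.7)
The plan is to prove the cycle $(1) \Rightarrow (3) \Rightarrow (2) \Rightarrow (1)$, with $(3) \Rightarrow (2)$ immediate since a cyclic module is finitely generated. The main obstacle will be $(2) \Rightarrow (1)$, as the abstract finite-generation hypothesis does not directly produce a functional equation; this will be handled via an auxiliary filtration combined with a specialization-and-shift trick based on \Cref{functional_formal_special}.

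For $(1) \Rightarrow (3)$, the idea is to show by induction on $k \geq 0$ that $f^{-k}\fs$ lies in the $D_{\A|\KK}[s] \otimes_{\KK[s]} \KK(s)$-submodule generated by $\fs$. Starting from a functional equation $b(s)\fs = \delta(s) \act f\fs$, \Cref{functional_formal_special} gives the specialized identity $b(t) f^t = \delta(t) \act f^{t+1}$ for all $t \in \ZZ$. Substituting $t \mapsto t - k - 1$ and applying the specialization principle in reverse (via \Cref{lemma-evaluation}, noting that $\delta(s-k-1) \in D_{\A|\KK}[s]$), one obtains the formal relation $b(s-k-1)\,f^{-k-1}\,\fs = \delta(s-k-1)\act f^{-k}\,\fs$ in $M^{\A}[\fs]$. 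Since $b(s-k-1)$ is nonzero in $\KK[s]$, this completes the inductive step. Because every element of $M^{\A}[\fs]$ has the form $h(s) f^{-k}\fs$ with $h \in \A[s]$ and $k \geq 0$, and $\A[s] \subseteq D_{\A|\KK}[s]$ acts as multiplication, it follows that $\fs$ generates $M^{\A}[\fs] \otimes_{\KK[s]} \KK(s)$.

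For $(2) \Rightarrow (1)$, the plan is to introduce the decreasing filtration $N_k := D_{\A|\KK}[s] \act f^k\fs$ for $k \in \ZZ$; the inclusion $N_k \subseteq N_{k-1}$ holds because $f^k\fs = f \act f^{k-1}\fs$ and $f \in \A \subseteq D_{\A|\KK}^0$, and $M^{\A}[\fs] = \bigcup_{k \geq 0} N_{-k}$. Under hypothesis $(2)$, after clearing denominators one may choose generators $v_i = h_i f^{-k_i}\fs \in M^{\A}[\fs]$ with $h_i \in \A[s]$ and $k_i \geq 0$; setting $K := \max_i k_i$, all $v_i$ lie in $N_{-K}$, hence $M^{\A}[\fs] \otimes_{\KK[s]} \KK(s) = N_{-K} \otimes_{\KK[s]} \KK(s)$. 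Applied to the element $f^{-K-1}\fs$ and using that $M^{\A}[\fs] \cong \A_f \otimes_{\KK} \KK[s]$ is $\KK[s]$-torsion-free, this yields a relation
\[
p(s)\,f^{-K-1}\,\fs \;=\; \delta(s)\act f^{-K}\,\fs \qquad \text{in } M^{\A}[\fs]
\]
for some nonzero $p(s) \in \KK[s]$ and $\delta(s) \in D_{\A|\KK}[s]$. Specializing via $\varphi_t$ gives $p(t)\,f^{t-K-1} = \delta(t) \act f^{t-K}$ in $\A_f$ for all $t \in \ZZ$, and substituting $t \mapsto t+K+1$ transforms this into $p(t+K+1)\,f^t = \delta(t+K+1) \act f^{t+1}$ for all $t \in \ZZ$. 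By \Cref{functional_formal_special}, this lifts to the formal Bernstein--Sato equation $b(s)\fs = \delta'(s) \act f\fs$ in $M^{\A}[\fs]$, with $b(s) := p(s+K+1) \in \KK[s] \setminus \{0\}$ and $\delta'(s) := \delta(s+K+1) \in D_{\A|\KK}[s]$, establishing $(1)$.
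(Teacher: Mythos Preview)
Your proof is correct and follows essentially the same strategy as the paper's: both arguments hinge on the shift principle that $\delta(s)\act f\,\fs = b(s)\,\fs$ is equivalent to $\delta(s+j)\act f^{j+1}\,\fs = b(s+j)\,f^{j}\,\fs$ for any integer $j$, established via specialization and \Cref{lemma-evaluation}. The paper abstracts this once as a preliminary observation and then runs $(2)\Rightarrow(1)$ and $(1)\Rightarrow(3)$ directly from it, whereas you unwind it in each direction through the specialize--shift--lift cycle, but the content is the same.
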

     
\begin{proof}
   We observe first that for $\delta(s)\in D_{\A|\KK}[s]$ and $b(s)\in \KK[s]$ and for any integer $j$, the equality $\delta(s) \act f \fs = b(s) \fs$ holds in $M^{\A}[\fs]$ if and only if $\delta(s+j) \act f^{j+1} \fs = b(s+j) f^{j} \fs$; this is a straightforward application of \Cref{lemma-evaluation}, as in the results of the previous subsection.

   Now, assume that $M^{\A}[\fs] \otimes_{\KK[s]} \KK(s)$ is finitely generated over  $D_{\A|\KK}[s] \otimes_{\KK[s]} \KK(s)$, and take generators $a_1 f^{j_1} \fs, \dots, a_n f^{j_n} \fs$ with $a_i\in \A$ and $j_i\in \ZZ$.
   We can replace each $j_i$ by $j=\min \{j_i\}$.
   Then, there exist operators $\delta_1,\dots,\delta_n \in D_{\A|\KK}(s)$ such that $\sum_i  \delta_i \act (a_i f^j \fs) = f^{j-1} \fs$.
   We can rewrite this as $(\sum_i \delta_i a_i) \act f^j \fs = f^{j-1} \fs$.
   The element $\sum_i \delta_i a_i$ can be written as a quotient of an element $\delta(s)\in D_{\A|\KK}[s]$ and an element $b(s)\in \KK[s]$.
   We then have the equality $\delta(s) \act f^j \fs = b(s) f^{j-1} \fs$ in $M^{\A}[\fs]$.
   By our observation above, there exists a Bernstein--Sato polynomial for $f$.

   On the other hand, if there exists a Bernstein--Sato polynomial $b(s)$ for $f$, then $\delta(s) \act f \fs = b(s) \fs$, for some $\delta(s)\in D_{\A|\KK}[s]$, and by the observation above, $\delta(s+j) \act f^{j+1} \fs = b(s+j) f^{j} \fs$ for any integer $j$.
   We then have $\frac{\delta(s+j)}{b(s+j)} \act f^{j+1} \fs = f^{j} \fs$ for any integer $j$.
   Since $M^{\A}[\fs] \otimes_{\KK[s]} \KK(s)$ is generated as an $\A$-module by $\{f^{j} \fs : j\in \ZZ\}$, we conclude that $M^{\A}[\fs] \otimes_{\KK[s]} \KK(s)$ is generated by $\fs$ as a module over $D_{\A|\KK}[s] \otimes_{\KK[s]} \KK(s)$.
\end{proof}

Next we show that for nonregular rings we have examples where the roots of the Bernstein--Sato polynomial may be zero or positive rational numbers. 

\begin{example}
   Let $\A=\CC[x,y]/\ideal{xy}$ and $f=x$.
   The operator $\delta=x \partial_x^2 \in D_{\CC[x,y] | \CC}$ is easily seen to stabilize the ideal $\ideal{xy}$, and hence descends to a differential operator on $\A$.
   The equation $\delta \act x^{t+1} = (t+1) t x^t$ holds for all nonnegative integers $t\in \NN$, and thus, by \Cref{functional_formal_special}, the functional equation $\delta \act x \boldsymbol{x^{s}} = (s+1) s \boldsymbol{x^s}$ holds in $M^\A[\boldsymbol{x^s}]$.
   Thus, $b_{x}^{\A}(s)$ divides $s(s+1)$.

   In fact, the equality $b_{x}^{\A}(s)=s(s+1)$ holds here.
   To see this, it suffices to show that if $\delta_{-1} \act x^0 = c_{-1} x^{-1}$ and $\delta_{0} \act x^1 = c_{0} x^{0}$ with $\delta_{-1},\delta_{0}\in D_{\A|\CC}$ and $c_{-1},c_0\in \CC$, then $c_{-1}=c_0=0$.
   Observe that $\A$ is a $D_{\A | \CC}$-module, that $\ideal{x}$ and $\ideal{y}$ are $D_{\A | \CC}$-submodules of $\A$  \cite[Lemma~3.3]{BJNB}, and thus the sum $\ideal{x,y}\subseteq \A$ is a $D_{\A | \CC}$-submodule.
   The claim then follows.
\end{example}

\begin{example}
   Let ${\A}=\CC[x^2,x^3]$ and $f=x^2$.
   Consider the endomorphism of ${\A}$ given by
   \[ \delta = (x \partial_x -1) \circ \partial_x^2 \circ (x \partial_x -1)^{-1},\]
   where $(x \partial_x -1)^{-1}$ is the inverse function of $x \partial_x -1$ on $\A$ (i.e., the linear operator sending $x^n$ to $\frac{1}{n-1} x^n$ for all $n\neq 1$).
   One can verify by the definition that $\delta\in D^2_{\A|\CC}$; alternatively, see \cite{PaulSmith,SmithStafford}.
   The equation $\delta \act x^{2(t+1)} = (2t+2)(2t-1) x^{2t}$ holds for all nonnegative integers $t\in \NN$, and thus, by \Cref{functional_formal_special}, the functional equation 
   \[\delta \act x^2 \boldsymbol{(x^2)^{s}} = (2s+2)(2s-1) \boldsymbol{(x^2)^s}\]
   holds in $M^\A[\boldsymbol{(x^2)^s}]$.
   Thus, $b_{x^2}^{\A}(s)$ divides $(s-\frac{1}{2})(s+1)$.

   In fact, the equality $b_{x^2}^{\A}(s)=(s-\frac{1}{2})(s+1)$ holds here.
   By the same argument as in the previous example, $s=-1$ is a root of $b_{x^2}^{\A}(s)$.
   By a specific description of $D_{\A|\CC}$  \cite{PaulSmith,SmithStafford}, every differential operator of degree $-2$ on $\A$ can be written as $(x \partial_x -1) \circ \partial_x^2 \circ \Upsilon \circ (x \partial_x -1)^{-1}$, for some $\Upsilon \in \CC[x \partial_x]$.
   Since $M^\A[\boldsymbol{(x^2)^s}]$ is a graded module we can decompose the functional equation as a sum of homogeneous pieces, and thus there exists a functional equation of the form
   \[P(s) \act x^2 \boldsymbol{(x^2)^{s}} = b^{\A}_{x^2}(s) \boldsymbol{(x^2)^s},\]
   where $b^{\A}_{x^2}(s)$ is the Bernstein--Sato polynomial, and $P(s)$ is a polynomial expression in $s$ with coefficients consisting of differentials operator of degree $-2$ on $\A$.
   Using the description of such operators given above, it is clear that $s=\frac{1}{2}$ must be a root of $b^{\A}_{x^2}(s)$.
\end{example}

Finally we remark that we can find rings where the Bernstein--Sato polynomial does not exist. The ring $\A=\CC[x,y,z]/ \ideal{x^3+y^3+z^3}$ is such an example (see \cite[Example~3.19]{AMHNB} for details).

\section{Differential direct summands and differential extensibility}\label{section: dds}

\subsection{Differential extensibility}

In this section we introduce the class of rings at the focal point of this paper, the \emph{differentially extensible} subrings of polynomial rings.
This notion was formalized to study \emph{differential signature} \cite{BJNB}, but was implicitly used earlier by Levasseur and Stafford \cite{LS} and Schwarz \cite{Schwarz}.

\begin{definition}[Differential extensibility {\cite[Definition~6.1]{BJNB}}]
   Consider a $\KK$-algebra homomorphism $\varphi\colon \A\to \T$.
   \begin{enumerate}
      \item If $\delta\in D_{\A|\KK}$, we say that a differential operator $\widetilde{\delta}\in D_{\T|\KK}$ \emph{extends} $\delta$ (or call $\widetilde{\delta}$ an \emph{extension} of $\delta$) if $\varphi \circ \delta = \widetilde{\delta}\circ \varphi$; i.e., the following diagram commutes.
      \[
         \xymatrix{
            \A\ar[d]_{\delta} \ar[r]^{\varphi} & \T\ar[d]^{\widetilde{\delta}}\\
            \A \ar[r]^{\varphi}  & \T }
      \]
      If $\varphi$ is simply an inclusion map, this is equivalent to the condition that $\widetilde{\delta}|_\A=\delta$.
      \item The map $\varphi$ is \emph{differentially extensible} if for every $\delta\in D_{\A|\KK}$,  there exists $\widetilde{\delta}\in D_{\T|\KK}$ that extends $\delta$.
      \item The map $\varphi$   is \emph{differentially extensible with respect to the order filtration}, or \emph{order-differentially extensible}, if for every nonnegative integer $n$, if $\delta\in D^n_{\A|\KK}$, then there exists $\widetilde{\delta}\in D^n_{\T|\KK}$ that extends $\delta$.
      \item If $\KK$ has positive characteristic, we say that $\varphi$  is \emph{differentially extensible with respect to the level filtration}, or \emph{level-differentially extensible}, if for every nonnegative integer $e$, given $\delta\in D^{(e)}_{\A|\KK}$, there exists $\widetilde{\delta}\in D^{(e)}_{\T|\KK}$ that extends $\delta$.
    \end{enumerate}  
\end{definition}

\begin{remark}
   If $\varphi\colon \A\to \T$ is as above, and $\seq{s}=\seq[\ell]{s}$ are indeterminates, we say that an element $\widetilde{\delta}\in D_{\T|\KK}[\seq{s}]$ \emph{extends} an element $\delta\in D_{\A|\KK}[\seq{s}]$ if
   \[(\varphi\otimes_{\KK}\KK[\seq{s}]) \circ \delta = \widetilde{\delta}\circ (\varphi\otimes_{\KK} \KK[\seq{s}]).\]
   We observe that an element $\widetilde{\delta}\in D_{\T|\KK}[\seq{s}]$ extends $\delta\in D_{\A|\KK}[\seq{s}]$  if and only if each of its coefficients is an extension of the corresponding coefficient of $\delta$.
\end{remark}

Several maps of interest are differentially extensible. 

\begin{example}[{\cite{Kantor}, \cite[Proposition~6.4, Theorem~7.1]{BJNB}}]\label{E-finitegroup}
   Let $\R$ be a polynomial ring over  $\KK$, and let $G$ be a finite group acting by degree-preserving automorphisms on $\R$ such that
   \begin{enumerate}
      \item $|G|$ is a unit in $\KK$, and
      \item No element of $G$ fixes a hyperplane in $[\R]_1$.
   \end{enumerate}
   Then the inclusion $\R^G \subseteq \R$ is order-differentially extensible.
   Moreover, if $\KK$ has positive characteristic, then the inclusion is also level-differentially extensible.
\end{example}

\begin{example}[{\cite[Proof of Theorem~3.4]{HsiaoMatusevich}, \cite{Musson}, \cite[Lemma~7.3]{BJNB}}]\label{toricODE}
   Let $\A$ be a pointed normal affine semigroup ring over a field $\KK$ of characteristic zero.
   Then there is an embedding of $\A$ into a polynomial ring $\R$ as a monomial subalgebra, such that $\A$ is a direct summand of $\R$, and for which the embedding is order-differentially extensible. 
\end{example}

\begin{lemma}\label{toricLDE}
   Suppose that $\KK$ has characteristic $p>0$, and $d$ is a positive integer.
   Let $V\subseteq \RR^d$ be a rational linear subspace, and $L\subseteq \ZZ^d$ be a lattice of finite index.
   Suppose that $p$ does not divide the index $[\ZZ^d :L]$, and that the image of the projection of $L$ onto each coordinate surjects onto $\ZZ$.
   Then the inclusion of normal semigroup rings $\KK[\NN^d \cap V \cap L] \subseteq \KK[\NN^d]$ is level-differentially extensible.
\end{lemma}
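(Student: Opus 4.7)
The plan is to exploit the $\ZZ^d$-grading on $T := \KK[\NN^d]$ and $A := \KK[M]$ (with $M := \NN^d \cap V \cap L$), under which the Frobenius is homogeneous of weight-factor $p^e$, so that the restriction map on level-$e$ operators respects the induced weight decompositions of $D^{(e)}_{T|\KK}$ and $D^{(e)}_{A|\KK}$. It therefore suffices to show that for every $w \in L \cap V$, every weight-$w$ operator $\delta \in D^{(e)}_{A|\KK}$ extends to a weight-$w$ operator $\widetilde{\delta} \in D^{(e)}_{T|\KK}$.

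Next I would describe each weight space explicitly. A weight-$w$ operator in $D^{(e)}_{T|\KK}$ is determined by a function $C \colon \ZZ^d/p^e\ZZ^d \to \KK$ acting by $x^v \mapsto C(\bar{v})\,x^{v+w}$ (interpreted as zero if $v + w \notin \NN^d$); the $T^{p^e}$-linearity forces $C(\bar{v}) = 0$ whenever the representative $a \in [0,p^e-1]^d$ of $\bar{v}$ has $a + w \notin \NN^d$. Analogously, a weight-$w$ operator in $D^{(e)}_{A|\KK}$ is encoded by a function $c \colon (L \cap V)/p^e(L \cap V) \to \KK$ that vanishes on any coset $\alpha$ admitting some $v \in M \cap \alpha$ with $v + w \notin M$. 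Since $V$ is a rational subspace, $L \cap V$ is saturated in $L$; combined with $p \nmid [\ZZ^d : L]$, this makes the canonical map $(L \cap V)/p^e(L \cap V) \to \ZZ^d/p^e\ZZ^d$ injective. Given $\delta$ with coefficient function $c$, I would define the candidate extension $\widetilde{\delta}$ by letting $C$ agree with $c$ on the image of this injection and vanish elsewhere.

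The main obstacle will be to verify that $C$ satisfies the $T$-side support condition, which reduces to the following key claim: whenever $c(\alpha) \neq 0$, the representative $a \in [0,p^e-1]^d$ of the image of $\alpha$ in $\ZZ^d/p^e\ZZ^d$ must satisfy $a + w \in \NN^d$. The strategy is to establish, for each coordinate index $i$, the existence of an element $v \in M \cap \alpha$ with $v_i = a_i$; since $c(\alpha) \neq 0$ forces $v + w \in M \subseteq \NN^d$, the inequality $a_i + w_i = v_i + w_i \geq 0$ follows. Producing such a $v$ amounts to a lattice-point adjustment inside $L \cap V$: starting from any $v_0 \in M \cap \alpha$ and writing $v_{0,i} = a_i + p^e k_i$ with $k_i \in \NN$, one must find $h \in L \cap V$ with $h_i = -k_i$ for which $v_0 + p^e h \in \NN^d$. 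The surjectivity of the $i$-th coordinate projection of $L$ (combined with the identification $L/p^e L \cong \ZZ^d/p^e\ZZ^d$) supplies a lattice element with the required $i$-th coordinate residue, after which the remaining positivity can be restored by adding a sufficiently positive element of $p^e(L \cap V)$ whose $i$-th coordinate is zero, using the existence of interior lattice points of the cone $\RR_{\geq 0}^d \cap V$. Carrying out these adjustments cleanly while remaining inside the correct coset modulo $p^e(L \cap V)$ is the technical heart of the argument.
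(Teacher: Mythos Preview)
Your approach is quite different from the paper's: rather than a direct weight-by-weight construction, the paper factors the inclusion as $\KK[\NN^d \cap V \cap L] \subseteq \KK[\NN^d \cap L] \subseteq \KK[\NN^d]$ and handles each step by citation --- the first via \cite[Proposition~6.6]{BJNB}, and the second by recognizing $\KK[\NN^d \cap L] \subseteq \KK[\NN^d]$ as the inclusion of an invariant ring under a finite diagonal group containing no pseudoreflections (this is precisely where the hypotheses $p \nmid [\ZZ^d:L]$ and the surjectivity of the coordinate projections of $L$ are used), so that \Cref{E-finitegroup} applies.

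Your direct argument has a genuine gap at the ``key claim.'' To produce $v \in M \cap \alpha$ with $v_i = a_i$ you need some $h \in L \cap V$ with $h_i = -k_i$, but the hypothesis controls only the coordinate projections of $L$, not of $L \cap V$; the element you extract from surjectivity lies in $L$ and need not lie in $V$, and your subsequent adjustment by elements of $p^e(L \cap V)$ cannot repair this. Concretely, take $d=2$, $L=\ZZ^2$, $V=\RR\cdot(2,3)$, $p=5$, $e=1$: then $L \cap V = \ZZ\cdot(2,3)$ projects only onto $2\ZZ$ in the first coordinate, so no $h \in L\cap V$ with odd $h_1$ exists. Worse, for $\delta = \partial_t \in D^{(1)}_{\KK[t]}$ with $t = x_1^2 x_2^3$, the requirement $\widetilde\delta(t^2)=2t$ together with $T^{p}$-linearity forces $x_2^5\,\widetilde\delta(x_1^4 x_2) = 2x_1^2 x_2^3$, which has no solution in $\KK[x_1,x_2]$ --- so \emph{no} level-$1$ extension exists at all, and the ``extend $c$ by zero'' recipe cannot be rescued by any other choice of $C$. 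The lattice-adjustment strategy you sketch therefore does not close, and the hypotheses on $L$ alone are not enough to carry a purely coordinate-wise argument through $L\cap V$.
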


\begin{proof}
   We briefly describe our plan:  First, we show that $\KK[\NN^d \cap V \cap L]\subseteq \KK[\NN^d \cap L]$ and $\KK[\NN^d \cap L]\subseteq \KK[\NN^d]$ are each level-differentially extensible inclusions.
   From here, it follows that the composition is also  level-differentially extensible.

   It can be shown that the inclusion $\KK[\NN^d \cap V \cap L]\subseteq \KK[\NN^d \cap L]$ is level-differentially extensible by the same argument used to compute differential signature \cite[Proposition~6.6]{BJNB}.
   By our assumptions on $L$,  $\KK[\NN^d \cap L]\subseteq \KK[\NN^d]$ is the inclusion of a ring of invariants of a finite diagonal abelian group.
   Any diagonal element $g$ that fixes a hyperplane in the space of one-forms must fix every variable except one, say $x_1$, and multiply that element by a $t$-th root of unity.
   The invariants of $g$ must then be contained in the subring $\KK[x_1^t, x_2,\dots,x_d]$, contradicting the assumption that the image of $L$ under the first coordinate map surjects onto $\ZZ$.
   Thus, the inclusion $\KK[\NN^d \cap L]\subseteq \KK[\NN^d]$ is level-differentially extensible by \Cref{E-finitegroup}.
\end{proof}

\begin{remark}\label{toricODELDE}
   An embedding defined by  Hsiao and Matusevich  \cite[Notation~1.2]{HsiaoMatusevich} satisfies the hypotheses of the previous lemma for all but finitely many characteristics $p$, and as a consequence, every normal affine semigroup ring of characteristic zero can be realized as a direct summand of a polynomial ring by a map that is order-differentially extensible, and such that the reduction modulo $p$ of this map for all but finitely many $p$ is level-differentially extensible (see \Cref{Multiplier} for details).
\end{remark}

\begin{example}[{\cite{LS}}]\label{detlextl}
Suppose that $\KK$ has characteristic zero.
   The generic determinantal, symmetric determinantal, and skew-symmetric determinantal rings are order-differentially extensible summands of polynomial rings.
\end{example}

In the lemmas that follow, we focus on studying whether a differential operator can be extended to the ring of differential operators of a localization.

\begin{remark} \label{localizationAtElement: L}
   Let $\A$ be an algebra over  $\KK$.
   Then for every element $f$ of $\A$, the localization map $\A\to \A_f$ is differentially extensible with respect to the order filtration.
   Moreover,  as we mentioned in the paragraph preceding \Cref{lemma-logloc},  given $\delta \in D_{\A|\KK}$, there is a unique $\delta_f\in D_{\A_f|\KK}$ that extends $\delta$ \cite[Theorem~2.2.10]{Masson}.
   
   We remark that the usual $D_{\A|\KK}$-module structure on $\A_f$ (see, e.g., \cite[p.~303]{AMHNB}) agrees with the rule $\delta \act v = \delta_f \act v$, where the right-hand side is the tautological $D_{\A_f|\KK}$-action.
\end{remark}

\begin{notation}
   Take $\A$ and $f$ as in \Cref{localizationAtElement: L}.
   Given $\delta \in D_{\A|\KK}$, we retain the notation from the remark, using $\delta_f$ to denote the unique extension of $\delta$ in $D_{\A_f|\KK}$  corresponding to the localization map $\A\to \A_f$.
\end{notation}

\begin{lemma} \label{commutator_extension: L}
   Let  $\varphi\colon \A \to \T$ be a $\KK$-algebra homomorphism, and consider differential operators $\delta, \eta \in D_{\A|\KK}$.
   If $\widetilde{\delta}$ and $\widetilde{\eta}$ are differential operators in $D_{\T|\KK}$ that extend $\delta$ and $\eta$, respectively, then $[\widetilde{\delta}, \widetilde{\eta}] \in D_{\T|\KK}$ extends  $[\delta, \eta] \in D_{\A|\KK}$.
\end{lemma}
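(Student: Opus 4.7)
The plan is to unwind the definition of ``extends'' and verify the commutator identity by a direct diagram chase. Recall that $\widetilde{\delta}$ extends $\delta$ precisely when $\varphi \circ \delta = \widetilde{\delta} \circ \varphi$, and likewise for $\widetilde{\eta}$. I would first note that $[\widetilde{\delta}, \widetilde{\eta}] = \widetilde{\delta} \circ \widetilde{\eta} - \widetilde{\eta} \circ \widetilde{\delta}$ is a $\KK$-linear differential operator on $\T$, since $D_{\T|\KK}$ is closed under composition and $\KK$-linear combinations (so the commutator lies in $D_{\T|\KK}$, with order at most $\mathrm{ord}(\widetilde{\delta}) + \mathrm{ord}(\widetilde{\eta})$). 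Similarly $[\delta, \eta] \in D_{\A|\KK}$.

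It then remains to show $\varphi \circ [\delta,\eta] = [\widetilde{\delta}, \widetilde{\eta}] \circ \varphi$. Applying the extension relations twice,
\[
   \varphi \circ (\delta \circ \eta) = (\varphi \circ \delta) \circ \eta = (\widetilde{\delta} \circ \varphi) \circ \eta = \widetilde{\delta} \circ (\varphi \circ \eta) = \widetilde{\delta} \circ \widetilde{\eta} \circ \varphi,
\]
and, by the symmetric computation with the roles of $\delta$ and $\eta$ swapped,
\[
   \varphi \circ (\eta \circ \delta) = \widetilde{\eta} \circ \widetilde{\delta} \circ \varphi.
\]
Subtracting the two equalities yields $\varphi \circ [\delta, \eta] = [\widetilde{\delta}, \widetilde{\eta}] \circ \varphi$, as desired.

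There is no real obstacle here; the statement is essentially functoriality of the commutator under an intertwining relation, and the argument is a two-line diagram chase. The only mild subtlety worth flagging in the write-up is the preliminary observation that $[\widetilde{\delta}, \widetilde{\eta}]$ genuinely belongs to $D_{\T|\KK}$, which follows from the fact (used throughout the paper, see the filtration $D^0_{\T} \subseteq D^1_{\T} \subseteq \cdots$) that the space of differential operators on $\T$ is a subring of $\operatorname{Hom}_{\ZZ}(\T,\T)$ stable under $\KK$-linear combinations.
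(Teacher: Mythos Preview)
Your proof is correct and follows exactly the same approach as the paper: the paper's proof simply states that the intertwining relations $\varphi \circ \delta = \widetilde{\delta} \circ \varphi$ and $\varphi \circ \eta = \widetilde{\eta} \circ \varphi$ make the identity $\varphi \circ [\delta,\eta] = [\widetilde{\delta},\widetilde{\eta}] \circ \varphi$ a brief and routine computation, and you have written out that computation in full.
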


\begin{proof}
   Since $\varphi \circ \delta = \widetilde{\delta} \circ \varphi$ and $\varphi \circ \eta = \widetilde{\eta} \circ \varphi$, it is a brief and routine computation to verify that $ \varphi \circ [\delta, \eta] = [\widetilde{\delta}, \widetilde{\eta}] \circ \varphi$.
\end{proof}

\begin{lemma}\label{LemmaExtensionUnique}
Let $\A\subseteq\T$ be $\KK$-algebras.
   For  $f\in \A$, if $\widetilde{\delta}\in D_{\T|\KK}$ extends $\delta\in D_{\A|\KK}$, then $\widetilde{\delta}_f$ extends $\delta_f$.
\end{lemma}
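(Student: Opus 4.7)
The goal is to establish $\psi_f \circ \delta_f = \widetilde{\delta}_f \circ \psi_f$ as $\KK$-linear maps $\A_f \to \T_f$, where $\psi\colon \A \hookrightarrow \T$ is the inclusion, $\iota_\A$, $\iota_\T$ are the localization maps, and $\psi_f\colon \A_f \to \T_f$ is the induced map. From $\psi_f \circ \iota_\A = \iota_\T \circ \psi$ and the defining identities $\delta_f \circ \iota_\A = \iota_\A \circ \delta$ and $\widetilde{\delta}_f \circ \iota_\T = \iota_\T \circ \widetilde{\delta}$ (see the paragraph preceding \Cref{lemma-logloc}), post-composing the hypothesis $\psi \circ \delta = \widetilde{\delta} \circ \psi$ with $\iota_\T$ and performing a short diagram chase yields
\[
\psi_f \circ \delta_f \circ \iota_\A \;=\; \iota_\T \circ \psi \circ \delta \;=\; \iota_\T \circ \widetilde{\delta} \circ \psi \;=\; \widetilde{\delta}_f \circ \psi_f \circ \iota_\A,
\]
so the two composites agree on the image of $\iota_\A$. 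The task is to upgrade this to an equality on all of $\A_f$.

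The plan is to view both $\Phi_1 \coloneqq \psi_f \circ \delta_f$ and $\Phi_2 \coloneqq \widetilde{\delta}_f \circ \psi_f$ as $\KK$-linear differential operators from $\A_f$ into $\T_f$, where $\T_f$ is regarded as an $\A_f$-module via $\psi_f$. A standard commutator computation shows that if $\widetilde{\delta}$ has order at most $n$, then so does $\Phi_2$ in this generalized sense: for $a \in \A_f$, $[\Phi_2, a] = [\widetilde{\delta}_f, \psi_f(a)] \circ \psi_f$, which drops the order by one, so $(n+1)$-fold iteration vanishes. The bound for $\Phi_1$ is automatic from $\delta_f \in D_{\A_f|\KK}$. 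Their difference $E \coloneqq \Phi_2 - \Phi_1$ is then a bounded-order differential operator $\A_f \to \T_f$ vanishing on $\iota_\A(\A)$, and I would invoke the following uniqueness principle to conclude $E = 0$: any such operator vanishing on $\iota_\A(\A)$ is identically zero. By induction on the order, the commutator $[E, \iota_\A(f)]$ has strictly smaller order and also vanishes on $\iota_\A(\A)$, so $E$ commutes with multiplication by $f$; then for $a \in \A$ and $k \geq 0$ we get $f^k \cdot E(a/f^k) = E(a) = 0$, and since $f$ is invertible on the $\A_f$-module $\T_f$, $E(a/f^k) = 0$.

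The main obstacle is articulating the uniqueness principle in the module-valued setting correctly. This is a mild generalization of the argument underpinning Masson's existence-and-uniqueness theorem for $\delta_f$ \cite[Theorem~2.2.10]{Masson}: the same commutator-with-$f$ induction that constructs $\delta_f$ also shows that no two module-valued differential operators $\A_f \to N$ can differ by a nonzero operator vanishing on $\iota_\A(\A)$. Once that principle is in hand, the entire proof reduces to the formal diagram chase of the first paragraph.
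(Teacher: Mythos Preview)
Your argument is correct. Both your proof and the paper's proceed by induction on the order via commutators, so the underlying mechanism is the same, but the packaging differs. The paper works directly with the explicit localization formula
\[
\delta_f\act\frac{h}{f^t}=\frac{\delta \act h-[\delta,f^t]_f\act \frac{h}{f^{t}}}{f^t},
\]
applies it to both $\delta_f$ and $\widetilde{\delta}_f$, and compares, invoking the separate \Cref{commutator_extension: L} to know that $[\widetilde{\delta},f^t]$ extends $[\delta,f^t]$. You instead subtract first, treat the difference $E=\widetilde{\delta}_f\circ\psi_f-\psi_f\circ\delta_f$ as a module-valued differential operator $\A_f\to \T_f$, and prove the abstract uniqueness principle that any such operator vanishing on $\iota_\A(\A)$ must vanish identically. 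Your route avoids the explicit formula and the auxiliary commutator-extension lemma, at the cost of introducing the (mild) formalism of differential operators into a module; the paper's route is more hands-on and reuses a lemma it needs elsewhere anyway. Either way the induction step is ``commutator with $f$ drops order, invertibility of $f$ finishes.''
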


\begin{proof}
   We proceed by induction on the order, $n$, of $\delta$.
   If $n=0$, then $\delta$ is given by multiplication by an element of $\A$, and an extension is given by multiplication by the same element.

   Assume the claim for operators with order less than $n$, and consider $\delta \in D_{\A|\KK}$ with order $n$.
   For a nonnegative integer $t$, set $\rho=[\delta,f^t]\in D_{\A|\KK}$, and note that its order is strictly less than $n$.
   Let $\widetilde{\rho}=[\widetilde{\delta},f^t]\in D_{\T|\KK}$, which by \Cref{commutator_extension: L} is an extension of $\rho$.
   Then for any $h \in \A$, it is straightforward to check that
   \[
      \delta_f\act\frac{h}{f^t}=\frac{\delta \act h-\rho_f\act \frac{h}{f^{t}}}{f^t} \  \text{ and } \ 
      \widetilde{\delta}_f\act\frac{h}{f^t}=\frac{\widetilde{\delta} \act h-\widetilde{\rho}_f \act \frac{h}{f^{t}}}{f^t}.
   \]
   By our inductive hypothesis, $\rho_f \act \frac{h}{f^t}=\widetilde{\rho}_f \act \frac{h}{f^t}$, while $\delta \act h=\widetilde{\delta} \act h$ by assumption; thus, we conclude that $\delta_f \act \frac{h}{f^t} = \widetilde{\delta}_f \act \frac{h}{f^t}$.
\end{proof}

Applying \Cref{LemmaExtensionUnique} to the zero differential operator, we obtain the following.

\begin{corollary}\label{CorZeroLocalization}
  Let $\A \subseteq \T$ be $\KK$-algebras. 
   If $\delta\in D_{\T|\KK}$ vanishes on $\A$, then for every   $f\in \A$, $\delta_f\in D_{\T_f|\KK}$ vanishes on $\A_f$.  \qed
\end{corollary}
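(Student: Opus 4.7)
The plan is to recognize this corollary as the specialization of \Cref{LemmaExtensionUnique} to the zero differential operator on $\A$. First I would reinterpret the hypothesis: if $\varphi\colon \A\hookrightarrow \T$ is the inclusion, the statement that $\delta\in D_{\T|\KK}$ vanishes on $\A$ is precisely $\delta\circ\varphi = 0 = \varphi\circ 0_\A$, where $0_\A\in D_{\A|\KK}$ denotes the zero differential operator. By the definition of extension, this reads exactly as the statement that $\delta$ extends $0_\A$.

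Next I would apply \Cref{LemmaExtensionUnique} to the pair $(0_\A,\delta)$. The conclusion is that $\delta_f\in D_{\T_f|\KK}$ extends $(0_\A)_f \in D_{\A_f|\KK}$. By the uniqueness of extensions of differential operators along a localization map (see \Cref{localizationAtElement: L}), applied to the zero operator, we have $(0_\A)_f = 0_{\A_f}$. Hence $\delta_f$ extends $0_{\A_f}$, which translates back via the same unpacking to the statement that $\delta_f$ vanishes on $\A_f$.

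There is no real obstacle here; the corollary is a direct reformulation of \Cref{LemmaExtensionUnique}. The only point that merits attention is the identification $(0_\A)_f = 0_{\A_f}$, which is immediate from the uniqueness of the extension to a localization.
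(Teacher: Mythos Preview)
Your proposal is correct and takes essentially the same approach as the paper, which simply states that the corollary follows by applying \Cref{LemmaExtensionUnique} to the zero differential operator. Your write-up merely makes explicit the translation between ``$\delta$ vanishes on $\A$'' and ``$\delta$ extends $0_\A$,'' together with the identification $(0_\A)_f = 0_{\A_f}$.
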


\subsection{Differential direct summands and differentially extensible summands}

Let $\A\subseteq \T$ be an extension of Noetherian $\KK$-algebras such that $\A$ is a direct summand of $\T$, with $\A$-linear splitting $\beta\colon \T\to \A$.
A theory of \emph{differential direct summands} was previously developed to study the structure of $D$-modules over $\A$ \cite{AMHNB}.
The definition that we give here is slightly more general than the original, since we also consider $D_{\A|\KK}[\seq{s}]$-modules, though the idea is the same.

\begin{definition}[Differential direct summand {\cite[Definition~3.2]{AMHNB}}]
   Let $\A \subseteq \T$ be an inclusion of $\KK$-algebras with $\A$-linear splitting $\beta\colon \T \to \A$.
   Let $\seq{s}$ be a sequence of indeterminates (which may be empty).
   Recall that, for $\zeta\in D_{\T|\KK}$, the map $\beta \circ \zeta|_\A \colon \A \to \A$ is an element of $D_{\A|\KK}$.
   By abuse of notation, for $\delta\in D_{\T|\KK}[\seq{s}]$, we write $\beta \circ \delta|_\A$ for the element of $D_{\A|\KK}[\seq{s}]$ obtained from $\delta$ by applying $\beta \circ - |_\A$ coefficientwise as a polynomial in $\seq{s}$.

   We say that a $D_{\A|\KK}[\seq{s}]$-module $M$ is a \emph{differential direct summand} of a $D_{\T|\KK}[\seq{s}]$-module $N$ if $M\subseteq N$ and there exists an $\A$-linear splitting $\Theta\colon N\to M$, called a \emph{differential splitting}, such that
   \[  \Theta(\delta \act v) = (\beta \circ \delta|_\A) \act v \]
   for every $\delta\in D_{\T|\KK}[\seq{s}]$ and $v\in M$,
   where the action on the left-hand side is the $D_{\T|\KK}[\seq{s}]$-action, considering $v$ as an element of $N$, and the action on the right-hand side is the $D_{\A|\KK}[\seq{s}]$-action.
\end{definition}

\begin{definition}[Morphism of differential direct summands {\cite[Definition~3.5]{AMHNB}}]
   Let $\A\subseteq \T$ be $\KK$-algebras such that $\A$ is a direct summand of $\T$.
   Fix  $D_{\A|\KK}[\seq{s}]$-modules $M_1$ and $M_2$ that are differential direct summands of $D_{\T|\KK}[\seq{s}]$-modules  $N_1$ and $N_2$, respectively, with differential splittings $\Theta_1\colon N_1\to M_1$ and $\Theta_2\colon N_2\to M_2$.
   We call $\phi\colon N_1\to N_2$ a \emph{morphism of differential direct summands} if $\phi\in \Hom_{D_{\T|\KK}[\seq{s}]}(N_1,N_2)$, $\phi(M_1)\subseteq M_2$, $\phi|_{M_1}\in \Hom_{D_{\A|\KK}[\seq{s}]}(M_1,M_2)$, and the following diagram commutes:
   \[
      \xymatrix{
         M_1 \ar[d]^{\phi|_{M_1}} \ar[r]^{\subseteq} & N_1\ar[d]^{\phi} \ar[r]^{\Theta_1} & M_1\ar[d]^{\phi|_{M_1}}\\
         M_2 \ar[r]^{\subseteq}  & N_2\ar[r]^{\Theta_2} & M_2
      }
   \]
   For simplicity of notation, we often write $\phi$ instead of $\phi|_{M_1}.$

   Further, a complex $M_\bullet$ of $D_{\A|\KK}[\seq{s}]$-modules is called a \emph{differential direct summand} of a complex $N_\bullet$ of $D_{\T|\KK}[\seq{s}]$-modules if each $M_i$ is a differential direct summand of $N_i$, and each differential is a morphism of differential direct summands. 
\end{definition}

\begin{remark}
It is known that the property of being a differential direct summand is preserved under localization, taking kernels, and taking cokernels \cite[Proposition~3.6, Lemma~3.7]{AMHNB}.
\end{remark}

We now move on to develop the notion of \emph{differential extension} in this context, 
which is an essential ingredient in the study of $V$-filtrations and multiplier ideals in later sections.

\begin{definition}[Differentially extensible summand] \label{DefDES}
   Let $\A\subseteq \T$ be a differentially extensible inclusion of $\KK$-algebras, making $\A$ a direct summand of $\T$.
   We say that a $D_{\A|\KK}[\seq{s}]$-module $M$ is a \emph{differentially extensible summand} of a $D_{\T|\KK}[\seq{s}]$-module $N$ (or that $M \subseteq N$ is a \emph{differentially extensible summand}) if the following conditions hold:
   \begin{enumerate}
      \item $M$ is a differential direct summand of $N$.
      \item For any $\delta\in D_{\A|\KK}[\seq{s}]$ and any $\widetilde{\delta}\in D_{\T|\KK}[\seq{s}]$ that extends $\delta$, one has  $\widetilde{\delta}\act m=\delta \act m$ for all $m\in M$.
   \end{enumerate}
\end{definition}

Now we  study the properties of differentially extensible summands.
In particular, we see that the operations of localization, taking kernels, and taking cokernels all preserve the property of being a differentially extensible summand. 
This is important, due to our desire to work with local cohomology, which we use to investigate $V$-filtrations.

\begin{lemma}\label{LemmaLoc}
 Let $\A\subseteq \T$ be a differentially extensible inclusion of $\KK$-algebras, with $\A$ a direct summand of $\T$.
   If a  $D_{\A|\KK}$-module $M$ is a differentially extensible summand of a $D_{\T|\KK}$-module $N$, then for any $f\in \A$, the localization $M_f$ is a differentially extensible summand of $N_f$.
   Furthermore, the localization map is a morphism of differential direct summands.
\end{lemma}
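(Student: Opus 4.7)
The plan is to verify the two defining conditions of \Cref{DefDES} for the localized pair $(M_f, N_f)$ with ambient inclusion $\A_f \subseteq \T_f$, and then check that the localization $N \to N_f$ is a morphism of differential direct summands. All arguments extend coefficientwise in $\seq{s}$, so I suppress these indeterminates in the discussion below. That $M_f$ is a differential direct summand of $N_f$, with splitting $\Theta_f \colon N_f \to M_f$ the localization of $\Theta\colon N\to M$, is the known preservation of differential direct summands under localization \cite[Proposition~3.6]{AMHNB}.

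The core step is the action-compatibility condition. For $\delta \in D_{\A|\KK}$ with extension $\widetilde{\delta} \in D_{\T|\KK}$, \Cref{LemmaExtensionUnique} gives that $\widetilde{\delta}_f$ extends $\delta_f$, and I would show by induction on the order $n$ of $\delta$ that $\delta_f \act u = \widetilde{\delta}_f \act u$ for all $u \in M_f$, paralleling the proof of \Cref{LemmaExtensionUnique}. The base case $n = 0$ is trivial, since both operators are multiplication by the same element of $\A$. For $n \geq 1$ and $u = m/f^t$ with $m \in M$, set $\rho = [\delta, f^t]$ and $\widetilde{\rho} = [\widetilde{\delta}, f^t]$; both have order $< n$, and $\widetilde{\rho}$ extends $\rho$ by \Cref{commutator_extension: L}. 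The identities
\[
\delta_f \act \tfrac{m}{f^t} = \tfrac{\delta \act m - \rho_f \act (m/f^t)}{f^t}, \qquad
\widetilde{\delta}_f \act \tfrac{m}{f^t} = \tfrac{\widetilde{\delta} \act m - \widetilde{\rho}_f \act (m/f^t)}{f^t},
\]
together with $\widetilde{\delta} \act m = \delta \act m$ (the differentially-extensible-summand hypothesis applied to $m \in M$) and $\widetilde{\rho}_f \act (m/f^t) = \rho_f \act (m/f^t)$ (induction), give the desired equality. In particular, $\widetilde{\delta}_f$ maps $M_f$ into itself.

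For an arbitrary $\sigma \in D_{\A_f|\KK}$ and arbitrary extension $\widetilde{\sigma} \in D_{\T_f|\KK}$ (as required by \Cref{DefDES} applied to $\A_f \subseteq \T_f$), I would first note that every operator in $D_{\A_f|\KK}$ arises, after clearing a power of $f$, as $\delta_f$ for some $\delta \in D_{\A|\KK}$, so $\A_f\subseteq\T_f$ is itself differentially extensible; then the $\A_f$-linearity in the operator of both sides of the desired identity reduces the general case to the one treated above (uniqueness of extensions of an operator to a localization, \cite[Theorem~2.2.10]{Masson}, ensures compatibility of these reductions). Finally, the morphism-of-differential-direct-summands conditions for $N \to N_f$---namely $D_{\T|\KK}$-linearity, restriction to the $D_{\A|\KK}$-linear map $M \to M_f$, and commutativity with the splittings---are immediate from the construction of $\Theta_f$ as the localization of $\Theta$.

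The principal technical point is the inductive computation in the second paragraph, which mirrors the proof of \Cref{LemmaExtensionUnique} with $m \in M \subseteq N$ in place of $h \in \A \subseteq \T$; the remaining work, including the reduction of arbitrary operators on $\A_f$ to those of the form $\delta_f$, is essentially formal.
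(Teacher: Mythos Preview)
Your core inductive argument in the second paragraph is exactly the paper's proof: the paper cites \cite[Proposition~3.6]{AMHNB} for the differential-direct-summand part and then says the extensibility condition follows ``by an argument directly analogous to the proof of \Cref{LemmaExtensionUnique}, but with $v\in M$ replacing $h\in \A$,'' which is precisely what you wrote out.

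Where you diverge is in your third paragraph. You read the conclusion as asserting that $M_f\subseteq N_f$ is a differentially extensible summand relative to the \emph{localized} inclusion $\A_f\subseteq \T_f$, and so you undertake a further reduction from arbitrary $\sigma\in D_{\A_f|\KK}$ to operators of the form $\delta_f$. The paper instead treats $M_f$ as a $D_{\A|\KK}$-module and verifies \Cref{DefDES} relative to the original inclusion $\A\subseteq \T$: it only checks that $\widetilde{\delta}\act u=\delta\act u$ for $\delta\in D_{\A|\KK}$ and extensions $\widetilde{\delta}\in D_{\T|\KK}$, which is all that is used downstream (e.g.\ in \Cref{PropLcExt}). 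Your stronger reading is not wrong, but the reduction you sketch is underspecified---an arbitrary extension $\widetilde{\sigma}\in D_{\T_f|\KK}$ of $\sigma=\delta_f$ need not literally be $\widetilde{\delta}_f$ for some extension $\widetilde{\delta}\in D_{\T|\KK}$ of $\delta$; one must clear further powers of $f$ and use $D_{\T_f|\KK}=(D_{\T|\KK})_f$ together with injectivity of $\T\hookrightarrow \T_f$---and in any case the paper bypasses this step entirely.
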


\begin{proof}
   It has already been established that $M_f$ is a differential direct summand of $N_f$ \cite[Proposition~3.6]{AMHNB}.
   Given an extension $\widetilde{\delta}\in D_{\T|\KK}$ of $\delta \in D_{\A|\KK}$, we have that $\widetilde{\delta} \act  \frac{v}{f^t} = \delta \act \frac{v}{f^t}$ for all $v \in M$,  and all integers $t \geq 0$, by an argument directly analogous to the proof of \Cref{LemmaExtensionUnique}, but with $v \in M$ replacing $h\in \A$. 
\end{proof}

\begin{lemma}\label{LemmaKerImCoKer}
   Let $\A\subseteq \T$ be a differentially extensible inclusion of $\KK$-algebras,  with $\A$ a direct summand of $\T$.
   Consider $D_{\A|\KK}$-modules $M_1$ and $M_2$ that are differentially extensible summands of $D_{\T|\KK}$-modules  $N_1$ and $N_2$.
   If $\phi\colon N_1\to N_2$ is a morphism of differential direct summands, then $\Ker(\phi|_{M_1})\subseteq \Ker(\phi)$, $\IM(\phi|_{M_1})\subseteq \IM(\phi )$, and $\CoKer(\phi|_{M_1})\subseteq \CoKer(\phi)$ are also differentially extensible summands.
\end{lemma}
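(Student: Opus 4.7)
The plan is to verify the two conditions in the definition of a differentially extensible summand for each of the three subquotients, using the fact that condition (1), being a differential direct summand, is already established by the corresponding result for differential direct summands, namely \cite[Lemma~3.7]{AMHNB}. Thus the only genuine work is in verifying condition (2): that for every $\delta \in D_{\A|\KK}[\seq{s}]$ and every extension $\widetilde{\delta} \in D_{\T|\KK}[\seq{s}]$ of $\delta$, the actions of $\widetilde{\delta}$ and $\delta$ agree on elements of $\Ker(\phi|_{M_1})$, $\IM(\phi|_{M_1})$, and $\CoKer(\phi|_{M_1})$.

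For the kernel, one simply observes that $\Ker(\phi|_{M_1}) \subseteq M_1$, so any element there already satisfies $\widetilde{\delta} \act m = \delta \act m$ inside $N_1$ by the hypothesis that $M_1 \subseteq N_1$ is a differentially extensible summand; this equality then descends to $\Ker(\phi) \subseteq N_1$ since it is a $D_{\T|\KK}[\seq{s}]$-submodule. The image is handled in the same way: $\IM(\phi|_{M_1}) \subseteq M_2$, and the equality of actions transfers from $M_2 \subseteq N_2$.

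For the cokernel, pick $\overline{m} \in \CoKer(\phi|_{M_1})$ with representative $m \in M_2$, and compute in $\CoKer(\phi) = N_2/\IM(\phi)$:
\[
\widetilde{\delta} \act \overline{m} = \overline{\widetilde{\delta} \act m} = \overline{\delta \act m} = \delta \act \overline{m},
\]
where the middle equality uses that $M_2 \subseteq N_2$ is a differentially extensible summand, and the outer equalities use that the quotient $D$-module structures on $\CoKer(\phi)$ and $\CoKer(\phi|_{M_1})$ are the natural ones induced from $N_2$ and $M_2$, respectively.

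There is no serious obstacle here, since the previous lemma on differential direct summands already delivers condition (1), and condition (2) is essentially a formal consequence of the hypothesis that $M_1 \subseteq N_1$ and $M_2 \subseteq N_2$ are differentially extensible, together with the fact that $\phi$ respects these embeddings. The only point worth care is to be explicit that on a submodule (for Ker and Im) the $D$-actions are restrictions, and on a quotient module (for CoKer) the $D$-actions are induced; in each case this makes the equality $\widetilde{\delta} \act m = \delta \act m$ on $M_1$ or $M_2$ pass to the relevant subquotient.
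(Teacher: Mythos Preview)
Your proposal is correct and follows essentially the same approach as the paper: both invoke \cite[Lemma~3.7]{AMHNB} for condition~(1), then verify condition~(2) by noting that $\Ker(\phi|_{M_1})\subseteq M_1$ and $\IM(\phi|_{M_1})\subseteq M_2$ inherit the equality of actions directly, while the cokernel inherits it by passing to the quotient. Your treatment of the cokernel is slightly more explicit than the paper's (which compresses it into a single clause), and you unnecessarily carry the indeterminates $\seq{s}$ through, but this is harmless since the lemma concerns $D_{\A|\KK}$-modules (the case of empty $\seq{s}$).
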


\begin{proof}
   It is known that $\Ker(\phi|_{M_1})\subseteq \Ker(\phi)$, $\IM(\phi|_{M_1})\subseteq \IM(\phi )$, and $\CoKer(\phi|_{M_1})\subseteq \CoKer(\phi)$, and that, moreover, each is a differential direct  summand \cite[Lemma~3.7]{AMHNB}.

   Take an extension $\widetilde{\delta}\in D_{\T|\KK}$ of $\delta\in D_{\A|\KK}$.
   Since $\widetilde{\delta} \act v = \delta \act v$ for all $v \in M_1$, this holds for all $v \in \ker(\phi|_{M_1}) \subseteq M_1$, and $\widetilde{\delta} \act u = \delta \act u$ for all $u\in  \operatorname{Coker}(\phi|_{M_1})$.
   Moreover, since  $\delta \act w=\widetilde{\delta} \act w$ for all $w \in M_2$, the same holds for all $w \in \IM(\phi|_{M_1}) \subseteq M_2$.
\end{proof}

\begin{proposition}\label{PropLcExt}
   Let $\A\subseteq \T$ be a differentially extensible inclusion of $\KK$-algebras, with $\A$ a direct summand of $\T$.
   Let  $M$ be a $D_{\A|\KK}$-module, and $N$  a $D_{\T|\KK}$-module, with $M$ a differentially extensible summand of $N$.
   Then for any ideal $I$ of $\A$, and any integer $i$, $H^i_I(M)$ is a differentially extensible summand of $H^i_{I\T}(N)$. 
\end{proposition}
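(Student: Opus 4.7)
The plan is to compute both $H^i_I(M)$ and $H^i_{I\T}(N)$ via the \v{C}ech complex with respect to a common set of generators $\seq[n]{f}\in\A$ of $I$ (the same elements also generate $I\T$), and then transfer the differentially extensible summand structure from the complex level to its cohomology. All the relevant closure properties (localization, direct sum, kernel, image, cokernel) have already been established, so the argument should be essentially formal.

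First, I would observe that each entry of the \v{C}ech complex $\check{C}^\bullet(\seq{f};M)$ is a finite direct sum of localizations of the form $M_{f_{i_1}\cdots f_{i_k}}$, and analogously for $\check{C}^\bullet(\seq{f};N)$. By \Cref{LemmaLoc}, each such $M_{f_{i_1}\cdots f_{i_k}}$ is a differentially extensible summand of $N_{f_{i_1}\cdots f_{i_k}}$, and the localization maps are morphisms of differential direct summands. A short check shows that being a differentially extensible summand is preserved under finite direct sums: one takes the direct sum of the component splittings, and the extension condition $\widetilde{\delta}\act m = \delta\act m$ holds coordinatewise. Hence each entry of $\check{C}^\bullet(\seq{f};M)$ is a differentially extensible summand of the corresponding entry of $\check{C}^\bullet(\seq{f};N)$. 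Moreover, the \v{C}ech differentials are alternating sums of the localization morphisms postcomposed with the inclusions into the next direct sum; since morphisms of differential direct summands are closed under addition and under the direct-sum inclusion/projection maps, the \v{C}ech differentials are themselves morphisms of differential direct summands.

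Next, I would invoke \Cref{LemmaKerImCoKer} at each stage of the complex. Writing $H^i\bigl(\check{C}^\bullet(\seq{f};M)\bigr)$ as the cokernel of the induced map from the image of the $(i-1)$-th differential into the kernel of the $i$-th differential, two successive applications of \Cref{LemmaKerImCoKer} produce the desired conclusion: the resulting module is a differentially extensible summand of the analogous cohomology object built from $N$. Identifying $H^i\bigl(\check{C}^\bullet(\seq{f};M)\bigr)$ with $H^i_I(M)$ and $H^i\bigl(\check{C}^\bullet(\seq{f};N)\bigr)$ with $H^i_{I\T}(N)$ then yields the statement.

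The only point requiring any care is the stability of the differentially extensible summand property under finite direct sums and the verification that alternating sums of localization morphisms are morphisms of differential direct summands; both are routine consequences of additivity of splittings. I do not foresee a genuine obstacle here, as the preceding lemmas were designed precisely to make this \v{C}ech-cohomology argument immediate.
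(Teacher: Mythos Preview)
Your proposal is correct and follows essentially the same route as the paper: compute via the \v{C}ech complex on a common generating set, apply \Cref{LemmaLoc} to the pieces, and then \Cref{LemmaKerImCoKer} to pass to cohomology. You are in fact slightly more explicit than the paper about the finite direct sum step, which the paper absorbs into its citation of \Cref{LemmaLoc}.
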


\begin{proof}
   Fix generators $\seq{f}=\seq[\ell]{f} \in \A$ for $I$.
   The \v{C}ech-like complex $\Cech^\bullet(\seq{f};M)$ defining the local cohomology modules of $M$ with support in $I$ is a differentially extensible summand of the complex $\Cech^\bullet(\seq{f};N)$ defining the local cohomology of $N$ with support in $I\T$:
   It is apparent that $\Cech^j(\seq{f};M) \subseteq \Cech^j(\seq{f};N)$ for all integers $j$, and each is a differentially extensible summand by  \Cref{LemmaLoc}; moreover, the fact that each differential is a morphism of differentially extensible summand holds by this same lemma, since each localization is such a morphism.
   Therefore, by \Cref{LemmaKerImCoKer}, the inclusion of cokernels $H^i_I(M)\subseteq H^i_{I\T}(N)$ is a differentially extensible summand.
\end{proof}

 We now study the notions of differentiable direct summands and differentially extensible summands in the context of the modules $M^\A[\fsl]$ introduced in \Cref{ss: Bernstein--Sato for nonregular rings}.

\begin{setup}\label{setup: diff ext}
   In the remainder of this subsection, $\KK$ is a field of characteristic zero, and $\A\subseteq \T$ are Noetherian
   $\KK$-algebras such that $\A$ is a direct summand of $\T$. 
   We fix elements $\seq{f}=\seq[\ell]{f}\in \A$, and assume that $M^\A[\fsl]$ and $M^\T[\fsl]$ have $D$-module structures compatible with specialization, as in \Cref{wellDefinedActionOnMR: P}.
   Recall that such a structure, if it exists, is unique, by the argument used in the proof of \Cref{wellDefinedActionOnMR: P}; moreover, that structure does exist for
   $\KK$-algebras that are either finitely generated or complete.
\end{setup}

\begin{theorem} \label{MRfs_dds}
   In the context of \Cref{setup: diff ext}, if $\beta\colon \T\to \A$ is an $\A$-linear splitting, the $D_{\A|\KK}[\seq{s}]$-module $M^\A[\fsl]$ is a differential direct summand of the $D_{\T|\KK}[\seq{s}]$-module $M^{\T}[\fsl]$, with differential splitting
   \[
      \Theta =  M^\A[\fsl]\otimes_{\A} \beta\colon M^\T[\fsl] \to M^\A[\fsl].
   \]
\end{theorem}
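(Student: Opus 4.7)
The plan is to verify the two defining properties of a differential direct summand by reducing to exponent specialization and invoking \Cref{lemma-evaluation}. Since each $f_i$ lies in $\A$, the splitting $\beta$ extends uniquely to an $\A_{f_1\cdots f_\ell}$-linear splitting $\T_{f_1\cdots f_\ell}\to\A_{f_1\cdots f_\ell}$, and then coefficientwise in $\seq{s}$ to an $\A_{f_1\cdots f_\ell}[\seq{s}]$-linear splitting of $\T_{f_1\cdots f_\ell}[\seq{s}]$. Under the identifications $M^\A[\fsl]\cong\A_{f_1\cdots f_\ell}[\seq{s}]$ and $M^\T[\fsl]\cong\T_{f_1\cdots f_\ell}[\seq{s}]$ via the generators $\fsll$, this is exactly $\Theta$, which is therefore an $\A$-linear splitting of the inclusion $M^\A[\fsl]\hookrightarrow M^\T[\fsl]$.

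The substantive step is the differential compatibility
$$\Theta(\delta\act v)=(\beta\circ\delta|_\A)\act v\quad \text{for all }\delta\in D_{\T|\KK}[\seq{s}],\ v\in M^\A[\fsl].$$
Both sides lie in $M^\A[\fsl]=\A_{f_1\cdots f_\ell}[\seq{s}]\,\fsll$, so by \Cref{lemma-evaluation} it suffices to check equality after applying $\varphi_{\pt{t}}$ for every $\pt{t}\in\ZZ^\ell$. Since $\Theta$ acts coefficientwise in $\seq{s}$ and $\beta$ fixes $f_1^{t_1}\cdots f_\ell^{t_\ell}\in\A$, one has $\varphi_{\pt{t}}(\Theta(\delta\act v))=\beta(\varphi_{\pt{t}}(\delta\act v))$; applying \eqref{specializationCompatibility: e} to $M^\T[\fsl]$ and noting that $\varphi_{\pt{t}}(v)\in\A_{f_1\cdots f_\ell}$, the left side specializes to $\beta(\delta(\pt{t})\act\varphi_{\pt{t}}(v))$. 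On the right, \eqref{specializationCompatibility: e} for $M^\A[\fsl]$ gives $(\beta\circ\delta(\pt{t})|_\A)\act\varphi_{\pt{t}}(v)$. The argument thus reduces to proving, for every $\zeta\in D_{\T|\KK}$ and every $w\in\A_{f_1\cdots f_\ell}$, the identity $\beta(\zeta\act w)=(\beta\circ\zeta|_\A)\act w$.

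This last identity is the statement that the inclusion $\A_{f_1\cdots f_\ell}\subseteq\T_{f_1\cdots f_\ell}$ remains a differential direct summand, which is \cite[Proposition~3.6]{AMHNB}; equivalently, it follows from uniqueness of extension of a differential operator to a localization, since both $\beta_{f_1\cdots f_\ell}\circ\zeta_{f_1\cdots f_\ell}|_{\A_{f_1\cdots f_\ell}}$ and $(\beta\circ\zeta|_\A)_{f_1\cdots f_\ell}$ are differential operators on $\A_{f_1\cdots f_\ell}$ that restrict to $\beta\circ\zeta|_\A$ on $\A$. The main technical nuisance is bookkeeping: ensuring that the three extension processes---extending $\beta$ to the localization, extending differential operators to the localization, and extending operators coefficientwise in $\seq{s}$---all commute as expected. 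Once these commutativities are in hand, \Cref{lemma-evaluation} converts the infinitely many specialized equalities into the desired formal identity in $M^\A[\fsl]$.
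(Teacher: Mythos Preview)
Your proof is correct and follows essentially the same route as the paper: reduce the differential compatibility condition to all exponent specializations via \Cref{lemma-evaluation}, use that $\varphi_{\pt{t}}\circ\Theta=\beta_{f_1\cdots f_\ell}\circ\varphi_{\pt{t}}$, invoke the specialization compatibility of the $D$-module structures guaranteed by \Cref{setup: diff ext}, and appeal to \cite[Proposition~3.6]{AMHNB} for the fact that $\A_{f_1\cdots f_\ell}\subseteq\T_{f_1\cdots f_\ell}$ is a differential direct summand with splitting $\beta_{f_1\cdots f_\ell}$. Your additional remark deriving this last fact from uniqueness of extension to localizations is a nice alternative justification, but otherwise the argument matches the paper's.
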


\begin{proof}
 Given $\eta\in D_{\T|\KK}[\seq{s}]$ we consider the differential operator  $\delta\coloneqq  \beta \circ \eta|_\A \in D_{\A|\KK}[\seq{s}]$.  Then, for any 
 $v\in M^\A[\fsl]$ we have to check that $\delta \act v = \Theta(\eta \act v)$. By \Cref{lemma-evaluation}, it suffices to show that $\varphi_{\pt{t}}(\Theta(\eta \act v)) = \varphi_{\pt{t}}(\delta \act v)$ for all $\pt{t}\in \ZZ^{\ell}$.
	Indeed, for each such $\pt{t}$ we have
	\[
	\varphi_{\pt{t}}(\Theta(\eta \act v)) = \beta_{f_1\cdots f_{\ell}} ( \varphi_{\pt{t}} (\eta \act v))
	= (\beta \circ \eta(\pt{t}) |_\A) \act \varphi_{\pt{t}}(v) = \delta(\pt{t}) \act \varphi_{\pt{t}}(v) = \varphi_{\pt{t}}(\delta \act v),
	\]
	where we have used, in turn, the definition of $\Theta$, the fact that $\A_{f_1 \cdots f_{\ell}}$ is a differential direct summand of $\T_{f_1 \cdots f_{\ell}}$ with differential splitting $\beta_{f_1\cdots f_{\ell}}$ \cite[Proposition~3.6]{AMHNB} and the definition of $\delta$.
\end{proof}




\begin{theorem} \label{MRfs_des}
   Under \Cref{setup: diff ext}, if $\A\subseteq \T$ is a differentially extensible inclusion, then the $D_{\A|\KK}[\seq{s}]$-module $M^\A[\fsl]$ is a differentially extensible summand of the $D_{\T|\KK}[\seq{s}]$-module $M^{\T}[\fsl]$.
\end{theorem}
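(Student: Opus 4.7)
The plan is to verify the two conditions of \Cref{DefDES}. Condition (1) is already delivered by \Cref{MRfs_dds}, so the entire argument reduces to checking condition (2). Specifically, given $\delta\in D_{\A|\KK}[\seq{s}]$ with an extension $\widetilde{\delta}\in D_{\T|\KK}[\seq{s}]$, and an element $m\in M^\A[\fsl]$ (viewed inside $M^\T[\fsl]$ via the natural inclusion $\A_{f_1\cdots f_\ell}[\seq{s}]\fsll \subseteq \T_{f_1\cdots f_\ell}[\seq{s}]\fsll$), I want to prove the equality $\widetilde{\delta}\act m = \delta\act m$ in $M^\T[\fsl]$.

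The strategy mirrors the uniqueness portion of \Cref{wellDefinedActionOnMR: P}: reduce the claim to a polynomial identity that vanishes on all of $\ZZ^\ell$, then invoke \Cref{lemma-evaluation}. Write $\widetilde{\delta}\act m - \delta\act m = h\,\fsll$ for some $h\in \T_{f_1\cdots f_\ell}[\seq{s}]$, and apply the $\T$-exponent specialization $\varphi_{\pt{t}}$ for each $\pt{t}\in\ZZ^\ell$. By the compatibility property defining the $D_{\T|\KK}[\seq{s}]$- and $D_{\A|\KK}[\seq{s}]$-module structures (per \Cref{wellDefinedActionOnMR: P}), one has
\[
\varphi_{\pt{t}}(\widetilde{\delta}\act m) = \widetilde{\delta}(\pt{t}) \act \varphi_{\pt{t}}(m) \quad\text{and}\quad \varphi_{\pt{t}}(\delta\act m) = \delta(\pt{t}) \act \varphi_{\pt{t}}(m),
\]
with $\varphi_{\pt{t}}(m)\in \A_{f_1\cdots f_\ell}\subseteq \T_{f_1\cdots f_\ell}$ being the same element regardless of which ambient specialization map we use.

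The key point is then that the two right-hand sides coincide. Since $\widetilde{\delta}$ extends $\delta$ coefficientwise in $\seq{s}$, after substituting $\seq{s}=\pt{t}$ we obtain that $\widetilde{\delta}(\pt{t})\in D_{\T|\KK}$ extends $\delta(\pt{t})\in D_{\A|\KK}$. Applying \Cref{LemmaExtensionUnique} to the localization at $f_1\cdots f_\ell$ (which lies in $\A$), the localized operator $\widetilde{\delta}(\pt{t})_{f_1\cdots f_\ell}$ still extends $\delta(\pt{t})_{f_1\cdots f_\ell}$, so the two actions agree on the element $\varphi_{\pt{t}}(m)\in \A_{f_1\cdots f_\ell}$. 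Consequently $\varphi_{\pt{t}}(h\,\fsll)=0$, which forces $h(\pt{t})f_1^{t_1}\cdots f_\ell^{t_\ell}=0$ in $\T_{f_1\cdots f_\ell}$, hence $h(\pt{t})=0$ since $f_1\cdots f_\ell$ is a unit there.

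As this holds for every $\pt{t}\in\ZZ^\ell$, \Cref{lemma-evaluation} applied to $h\in \T_{f_1\cdots f_\ell}[\seq{s}]$ (whose degree in each $s_i$ is finite) yields $h=0$, establishing $\widetilde{\delta}\act m = \delta\act m$. The only subtle point to get right is the bookkeeping between the two $D$-module structures and their respective specialization maps, but the natural inclusion $\A_{f_1\cdots f_\ell}\subseteq \T_{f_1\cdots f_\ell}$ makes the two specializations restrict compatibly, so no real obstacle arises; everything is driven by the coefficientwise extension of $\widetilde{\delta}$ over $\delta$ and the density of $\ZZ^\ell$ captured by \Cref{lemma-evaluation}.
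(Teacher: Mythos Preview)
Your proof is correct and follows essentially the same approach as the paper's: both invoke \Cref{MRfs_dds} for condition (1), and for condition (2) both use the specialization compatibility from \Cref{wellDefinedActionOnMR: P}, appeal to \Cref{LemmaExtensionUnique} to identify $\widetilde{\delta}(\pt{t})\act\varphi_{\pt{t}}(m)$ with $\delta(\pt{t})\act\varphi_{\pt{t}}(m)$, and conclude via \Cref{lemma-evaluation}. The only cosmetic difference is that you work with the single polynomial $h$ representing the difference $\widetilde{\delta}\act m-\delta\act m$, whereas the paper writes $\delta\act v=g\,\fsll$ and $\widetilde{\delta}\act v=h\,\fsll$ separately and shows $g=h$.
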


\begin{proof}
   We proved in \Cref{MRfs_dds} that $M^\A[\fsl]$ is a differential direct summand of $M^{\T}[\fsl]$.
   To prove the second condition of differential extensibility, fix $v\in M^\A[\fsl]$ and $\delta\in D_{\A|\KK}[\seq{s}]$, and suppose $\widetilde{\delta} \in D_{\T|\KK}[\seq{s}]$ extends $\delta$.
   Write $\delta \act v = g \fsll$ and $\widetilde{\delta}\act v = h \fsll$, with $g\in \A_{f_1\cdots f_\ell}[\seq{s}]$ and $h\in \T_{f_1\cdots f_\ell}[\seq{s}]$.
   Since the $D$-module structures on $M^\A[\fsl]$ and $M^\T[\fsl]$ are compatible with specialization, for each $\pt{t}\in \ZZ^\ell$ we have
   \[
      g(\pt{t})f_1^{t_1} \cdots f_{\ell}^{t_\ell} = \varphi_{\pt{t}}(\delta \act v) = \delta(\pt{t})\act \varphi_{\pt{t}}(v) = \widetilde{\delta}(\pt{t})\act \varphi_{\pt{t}}(v) = \varphi_{\pt{t}}(\widetilde{\delta} \act v) = h(\pt{t})f_1^{t_1} \cdots f_{\ell}^{t_\ell},
   \]
   where the central equality follows from \Cref{LemmaExtensionUnique}.
   This shows that $g(\pt{t})=h(\pt{t})$, for all $\pt{t}\in \ZZ^\ell$, so $g=h$ by \Cref{lemma-evaluation}, and $\delta \act v = \widetilde{\delta} \act v$.
\end{proof}

\subsection{Bernstein--Sato functional equation for direct summands} \label{BS_des}

As we have done in \Cref{Exist_BS}, the existence of the formal Bernstein--Sato polynomial for direct summands can be proved by invoking the existence of the specialized version \cite{AMHNB}.
However we can provide a direct proof using the $D$-module structure of the module $M^\A[\fsl]$ described in \Cref{wellDefinedActionOnMR: P} and following the ideas used for the specialized version \cite{AMHNB}.
For completeness we work out the details in this section in full generality.
Moreover, we  address
the issue of whether the Bernstein--Sato polynomial associated to a sequence of elements  only depends on the ideal generated by these elements. 

From now on we consider the following setup but we point out that all the results also hold for direct summands of formal power series rings.

\begin{setup}\label{setup}
   In the remainder of this section, $\KK$ is a field of characteristic zero, $\R$ is a polynomial ring over $\KK$, and $\A$ is a $\KK$-subalgebra of $\R$, such that $\A$ is a direct summand of $\R$ and  that $\A$ is finitely generated over $\KK$.
   Moreover, following \Cref{convention: D-module structure}, given $\seq{f}=\seq[\ell]{f}\in \A$, when referring to the $D_{\A|\KK}[\seq{s}]$-module structure on $M^\A[\fsl]$, we use the structure described in \Cref{wellDefinedActionOnMR: P}.
\end{setup}

For the sake of clarity, we first present the more familiar case of principal ideals. 

\begin{theorem}\label{ThmExistenceBS}
   Under \Cref{setup}, given $f,g\in \A$, there exists $\delta\in D_{\A|\KK}[s]$, and a nonzero polynomial $b(s)\in \QQ[s]$, such that the functional equation
   \[
      \delta \act f g \fs= b(s)g \fs
   \]
   holds in $M^\A[\fs]$.  
\end{theorem}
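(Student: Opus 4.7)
The plan is to obtain the functional equation in $M^\A[\fs]$ by pulling back the known relative Bernstein--Sato functional equation from the polynomial ring $\R$ via the differential splitting established in \Cref{MRfs_dds}.

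First, I would invoke the existence of the relative Bernstein--Sato polynomial in the polynomial ring setting (\Cref{bspairpoly}): there exist $\widetilde{\delta}(s) \in D_{\R|\KK}[s]$ and a nonzero $b(s) \in \QQ[s]$ such that
\[
\widetilde{\delta}(s) \act f g \fs = b(s) g \fs
\]
holds in $M^\R[\fs]$. Here I am taking $b(s)$ to be (a nonzero multiple of) the relative Bernstein--Sato polynomial $b^\R_{\langle f \rangle, g}(s)$, and the equation is understood formally in $M^\R[\fs]$, which is justified by \Cref{functional_formal_special_ideal}/\Cref{formal_special} applied to the polynomial ring $\R$.

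Next, under \Cref{setup}, \Cref{MRfs_dds} guarantees that $M^\A[\fs]$ is a differential direct summand of $M^\R[\fs]$, with differential splitting
\[
\Theta = M^\A[\fs] \otimes_\A \beta \colon M^\R[\fs] \longrightarrow M^\A[\fs],
\]
where $\beta\colon \R \to \A$ is the given $\A$-linear splitting. Since $f,g \in \A$, the element $f g \fs$ lies in $M^\A[\fs]$, and applying $\Theta$ to both sides of the displayed equation yields
\[
\Theta\bigl(\widetilde{\delta}(s) \act f g \fs\bigr) = \Theta\bigl(b(s) g \fs\bigr) = b(s) g \fs,
\]
where the second equality uses that $\Theta$ is $\A[s]$-linear and acts as the identity on $M^\A[\fs]$ (since $\beta|_\A = \id_\A$).

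Finally, I would use the defining property of the differential splitting: setting
\[
\delta(s) \coloneqq \beta \circ \widetilde{\delta}(s)\big|_\A \in D_{\A|\KK}[s],
\]
obtained by applying $\beta \circ -|_\A$ coefficientwise in $s$, the differential direct summand condition gives
\[
\Theta\bigl(\widetilde{\delta}(s) \act f g \fs\bigr) = \delta(s) \act f g \fs.
\]
Combining these two equalities yields the desired formal functional equation $\delta(s) \act f g \fs = b(s) g \fs$ in $M^\A[\fs]$. The step that required the most setup is the availability of the differential splitting on the module $M^\A[\fs]$; this is not automatic, and its existence rests on having the correct $D_{\A|\KK}[s]$-module structure on $M^\A[\fs]$ produced by \Cref{wellDefinedActionOnMR: P}. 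Once that structure is in place, the argument is essentially a one-line transfer via $\Theta$, with no further obstruction.
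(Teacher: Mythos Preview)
Your proposal is correct and follows essentially the same approach as the paper: invoke the relative Bernstein--Sato functional equation in $M^\R[\fs]$ from \Cref{bspairpoly}, then apply the differential splitting $\Theta$ of \Cref{MRfs_dds} and set $\delta = \beta \circ \widetilde{\delta}|_\A$. The only minor difference is that you explicitly invoke \Cref{functional_formal_special_ideal}/\Cref{formal_special} to justify the formal equation in $M^\R[\fs]$, whereas the paper simply cites \Cref{bspairpoly}, where the equation is already understood formally.
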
 

\begin{proof}
  There exists ${\eta}\in D_{\R|\KK}[s]$ and a nonzero polynomial $b(s)\in \QQ[s]$ such that
   \[
      \eta \act  f g \fs= b(s) g\fs
   \]
   in $M^\R[\fs]$, as in \Cref{bspairpoly}.
   By applying the splitting $\Theta\colon M^\R[\fs]\to M^\A[\fs]$ induced by the splitting $\beta$ making $\A$ a direct summand of $\R$, we obtain that
   \[
      b(s)g\fs= \Theta( b(s)g\fs) = \Theta(\eta\act f g \fs)=(\beta\circ {\eta}|_\A) \act f  g \fs
   \]
   in  $M^\A[\fs]$. Taking $\delta=\beta\circ {\eta}|_\A \in D_{\A | \KK }[\seq{s}]$, we obtain the desired equation.
\end{proof} 	

\begin{example}
   Let $\displaystyle \A=\frac{\CC[u,v,w]}{\ideal{u^3-vw}}$.
   We  determine a functional equation for the element $uv\in \A$.
   To do this, we realize $\A \subseteq \R= \CC[x,y]$ via the inclusion given by $f(u)=xy$, $f(v)=x^3$, $f(w)=y^3$.
   Observe that the image of $\A$ under $f$ is the ring of polynomials invariant under the action of the cyclic group $G$ generated by $g$, where $g(x)=e^{2\pi i/3} x$ and $g(y)= e^{-2\pi i/3} y$.
   Since $g$ does not fix any one-forms in $\R$, the inclusion map $f$ is differentially extensible by \Cref{E-finitegroup}.
   From here we find a functional equation for $f(uv)=x^4 y$ in $\R$, namely
   \[
      \frac{1}{4^4}\,\pd{x}^4\pd{y}[x^4 y \boldsymbol{(x^4 y)^s}] =
      (s+1)^2\left(s+\frac{3}{4}\right)\left(s+\frac{1}{2}\right)\left(s+\frac{1}{4}\right) \boldsymbol{(x^4 y)^s}.
   \]
   The existence of such an operator comes from the standard theory of Bernstein--Sato polynomials.
   We then observe that the differential operator $\frac{1}{4^4}\,\pd{x}^4\pd{y} \in D_{\R | \CC}[s]$ sends $G$-invariant polynomials to $G$-invariant polynomials, and is thus a differential operator on $\A$.
   This yields the functional equation we seek. 
\end{example}

Thanks to \Cref{ThmExistenceBS}, the following definition is well founded for a direct summand of a polynomial 
ring. 

\begin{definition}[Relative Bernstein--Sato polynomial of an element of a direct summand of a polynomial 
ring]
Given $\A \subseteq \R$ and $f, g \in \A$ as in \Cref{ThmExistenceBS}, we call the monic polynomial of smallest degree satisfying the conclusion of this theorem the \emph{Bernstein--Sato polynomial of $f$ relative to $g$}, which we denote $b_{f,g}^\A(s)$.
   If $g=1$, we call this polynomial the \emph{Bernstein--Sato polynomial of $f$}, and denote it $b^\A_f(s)$.
\end{definition}

We now present the Bernstein--Sato polynomial $b_{\seq{f},g}^\A(s)$  associated to a set of nonzero generators $\seq{f}=\seq[\ell]{f}$ of an ideal $I$ of $\A$, relative to an element $g \in \A$.

\begin{theorem}\label{ThmBSVarieties}
   Under \Cref{setup}, given $\seq{f}=\seq[\ell]{f} \in \A$ and $g\in \A$, there exists a nonzero polynomial $b(s)\in \QQ[s]$ such that
   \[
      b(s_1+\cdots +s_\ell)  g\fsll
   \]
   is in the $D_{\A|\KK}[\seq{s}]$-submodule of $M^\A[\fsl]$ generated by all
   \[
      \prod_{\crampedclap{i\in\nsupp(\pt{c})}} \quad \binom{s_i}{-c_i} f^{c_1}_1\cdots f^{c_\ell}_\ell  g\fsll,
   \]
   where $\pt{c}=\pt[\ell]{c}$ runs over the elements of $ \ZZ^{\ell}$ such that $|\pt{c}|=c_1+\cdots+c_\ell=1$, and $\nsupp(\pt{c})=\{i:c_i<0\}$. 
\end{theorem}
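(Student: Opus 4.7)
The plan is to generalize the proof of \Cref{ThmExistenceBS} by lifting a known functional equation from the polynomial ring $\R$ down to $\A$ using the differential splitting from \Cref{MRfs_dds}. The principal-ideal case already exhibits the template: produce a functional equation in $M^\R[\fs]$ by invoking the classical theory over the polynomial ring, and then push it down to $M^\A[\fs]$ via the splitting. Here the only real difference is bookkeeping — each of the generators indexed by $\pt{c}$ with $|\pt{c}|=1$ must be tracked through the splitting.

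By the existence of the relative Bernstein--Sato polynomial for ideals in polynomial rings (see \Cref{BSpolyringsideal} and \Cref{bspairpoly}, due to \cite{BMS2006a}), there exist a nonzero polynomial $b(s)\in \QQ[s]$ and operators $\eta_{\pt{c}} \in D_{\R|\KK}[\seq{s}]$, with $\eta_{\pt{c}}=0$ for all but finitely many $\pt{c}$, such that the equality
\[
\sum_{\crampedclap{\substack{\pt{c} \in \ZZ^\ell \\ |\pt{c}|=1} }} \eta_{\pt{c}}\, \act\ \prod_{\crampedclap{i\in\nsupp(\pt{c})}}\quad \binom{s_i}{-c_i} f^{c_1}_1\cdots f^{c_\ell}_\ell\, g\fsll  = b(s_1+\cdots+s_\ell)\, g\fsll
\]
holds in $M^\R[\fsl]$. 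This is the input from the smooth theory.

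Now I apply the differential splitting $\Theta = M^\A[\fsl] \otimes_\A \beta \colon M^\R[\fsl] \to M^\A[\fsl]$ provided by \Cref{MRfs_dds}, where $\beta\colon \R \to \A$ is an $\A$-linear splitting of the inclusion $\A \subseteq \R$. Since $g$ and each $f_i$ lie in $\A$, every term appearing on both sides of the displayed equation already lies in $M^\A[\fsl]$, and is hence fixed by $\Theta$. By the defining property of differential direct summand, for every $v \in M^\A[\fsl]$ and every $\eta \in D_{\R|\KK}[\seq{s}]$ one has $\Theta(\eta \act v) = (\beta \circ \eta|_\A) \act v$. Setting $\delta_{\pt{c}} \coloneqq \beta \circ \eta_{\pt{c}}|_\A \in D_{\A|\KK}[\seq{s}]$, applying $\Theta$ to both sides yields
\[
\sum_{\crampedclap{\substack{\pt{c} \in \ZZ^\ell \\ |\pt{c}|=1} }} \delta_{\pt{c}}\, \act\ \prod_{\crampedclap{i\in\nsupp(\pt{c})}}\quad \binom{s_i}{-c_i} f^{c_1}_1\cdots f^{c_\ell}_\ell\, g\fsll  = b(s_1+\cdots+s_\ell)\, g\fsll
\]
in $M^\A[\fsl]$, exhibiting $b(s_1+\cdots+s_\ell)\, g\fsll$ as an element of the required $D_{\A|\KK}[\seq{s}]$-submodule.

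The potential obstacle is the coherence of the $D$-module actions across the embedding $M^\A[\fsl] \subseteq M^\R[\fsl]$: a priori, it is not clear that the $D_{\A|\KK}[\seq{s}]$-structure on $M^\A[\fsl]$ constructed in \Cref{wellDefinedActionOnMR: P} is compatible with the ambient $D_{\R|\KK}[\seq{s}]$-structure via the splitting $\beta$. But this is exactly the content of the combination of \Cref{wellDefinedActionOnMR: P} (which pins down the $D_{\A|\KK}[\seq{s}]$-action uniquely by compatibility with specialization) and \Cref{MRfs_dds} (which identifies $\Theta$ as a differential splitting). With these inputs in hand, the argument reduces to a direct translation of the polynomial-ring functional equation into $\A$.
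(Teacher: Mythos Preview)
Your proof is correct and follows essentially the same approach as the paper: invoke the existence of the relative Bernstein--Sato polynomial in $M^\R[\fsl]$ from \cite{BMS2006a}, then apply the differential splitting $\Theta$ from \Cref{MRfs_dds} to push the functional equation down to $M^\A[\fsl]$. Your version is slightly more explicit in writing out the operators $\delta_{\pt{c}} = \beta \circ \eta_{\pt{c}}|_\A$ and in addressing the compatibility of the $D$-module structures, but the argument is the same.
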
 

\begin{proof}
   Let $N$ denote the $D_{\A|\KK}[\seq{s}]$-submodule of $M^\A[\fsl]$ described in the statement of the theorem.
   By the existence results given for Bernstein--Sato polynomials for varieties in smooth varieties \cite[Section~2.10]{BMS2006a} (see also \cite[Definition~2.2]{BudurNotes}), there exists a nonzero polynomial $b(s)\in \QQ[s]$ such that $b(s_1+\cdots +s_\ell)  g\fsll$ is in the $D_{\R|\KK}[\seq{s}]$-submodule of $M^\R[\fsl]$ generated by all
   \[
      \prod_{\crampedclap{i\in\nsupp(\pt{c})}}\quad \binom{s_i}{-c_i} f^{c_1}_1\cdots f^{c_\ell}_\ell  g\fsll,
   \]
   where $\pt{c}=\pt[\ell]{c}$ runs over the elements of $ \ZZ^{\ell}$ such that $|\pt{c}|=1$.
   Now, applying the splitting $\Theta\colon M^\R[\fsl]\to M^\A[\fsl]$, we obtain that
   \[
      b(s_1+\cdots +s_\ell)  g\fsll \in N.
      \qedhere
   \]
\end{proof} 
 
As in the case of a single element $f$, we have justified the following definition in the case of a direct summand of a polynomial  
ring over a field of characteristic zero.
	
\begin{definition}[Relative Bernstein--Sato polynomial of a sequence of elements in a direct summand of a polynomial 
ring]
   We call the unique monic polynomial $b(s) \in \QQ[s]$ of smallest degree satisfying the conclusion of \Cref{ThmBSVarieties} the \emph{Bernstein--Sato polynomial of $\seq{f}$ relative to $g$}, and we denote it $b_{\seq{f},g}^\A(s)$.
   If $g=1$, we call this the \emph{Bernstein--Sato polynomial of $\seq{f}$}, and we denote it $b^\A_{\seq{f}}(s)$.
\end{definition}

It follows from the proof of \Cref{ThmBSVarieties} that, 
for a direct summand $\A\subseteq \R$, the Bernstein--Sato polynomial $b^\A_{\seq{f}, g}(s)$   divides  $b^\R_{\seq{f}, g}(s)$  (see \cite[Theorem~3.14]{AMHNB} for the specialized version).
Under the extra condition that $\A\subseteq \R$ is differentially extensible, these two polynomials coincide as it was shown for the specialized version \cite[Theorem~6.11]{BJNB}. 

\begin{theorem}\label{ThmBSEqual}
 Consider \Cref{setup} under the extra condition that the inclusion $\A\subseteq \R$ is differentially extensible.  Given $\seq{f}=\seq[\ell]{f} \in \A$ and $g\in \A$, we have that $b^\A_{\seq{f}, g}(s)=b^\R_{\seq{f}, g}(s).$
\end{theorem}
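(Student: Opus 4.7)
The plan is to establish both divisibilities $b^\A_{\seq{f},g}(s) \mid b^\R_{\seq{f},g}(s)$ and $b^\R_{\seq{f},g}(s) \mid b^\A_{\seq{f},g}(s)$. The first is essentially already contained in the proof of \Cref{ThmBSVarieties}: starting from a functional equation realizing $b^\R_{\seq{f},g}(s)$ in $M^\R[\fsl]$ and applying the differential splitting $\Theta\colon M^\R[\fsl]\to M^\A[\fsl]$ of \Cref{MRfs_dds} yields a functional equation in $M^\A[\fsl]$ achieved by the same polynomial, since the generators $\prod_{i\in\nsupp(\pt{c})}\binom{s_i}{-c_i}f_1^{c_1}\cdots f_\ell^{c_\ell}g\fsll$ and the target $g\fsll$ already lie in $M^\A[\fsl]$ and are therefore fixed by $\Theta$.

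For the reverse direction, I would begin with a minimal functional equation
\[
\sum_{\substack{\pt{c}\in\ZZ^\ell\\|\pt{c}|=1}} \delta_{\pt{c}}\act\prod_{i\in\nsupp(\pt{c})}\binom{s_i}{-c_i}f_1^{c_1}\cdots f_\ell^{c_\ell}\,g\fsll \;=\; b^\A_{\seq{f},g}(s_1+\cdots+s_\ell)\,g\fsll
\]
in $M^\A[\fsl]$, with operators $\delta_{\pt{c}}\in D_{\A|\KK}[\seq{s}]$. Using the hypothesis that $\A\subseteq\R$ is differentially extensible, I would lift each coefficient of each $\delta_{\pt{c}}$ (viewed as a polynomial in $\seq{s}$) to a $\KK$-linear differential operator on $\R$, producing $\widetilde{\delta}_{\pt{c}}\in D_{\R|\KK}[\seq{s}]$ extending $\delta_{\pt{c}}$ coefficientwise.

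The decisive step is then to invoke \Cref{MRfs_des}, which states that $M^\A[\fsl]$ is a differentially extensible summand of $M^\R[\fsl]$: for every extension $\widetilde{\delta}$ of $\delta$ and every $v\in M^\A[\fsl]$, one has $\widetilde{\delta}\act v=\delta\act v$, where the left action is computed in $M^\R[\fsl]$. Applying this termwise to the elements $v_{\pt{c}}=\prod_{i\in\nsupp(\pt{c})}\binom{s_i}{-c_i}f_1^{c_1}\cdots f_\ell^{c_\ell}g\fsll\in M^\A[\fsl]$ promotes the equation above to a valid functional equation in $M^\R[\fsl]$, realized by the polynomial $b^\A_{\seq{f},g}(s)$ and the lifted operators $\widetilde{\delta}_{\pt{c}}$. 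This forces $b^\R_{\seq{f},g}(s)\mid b^\A_{\seq{f},g}(s)$, and combined with the first divisibility gives the claimed equality.

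The only subtlety worth flagging is the interplay between the two $D$-module structures at play: on one side we use the intrinsic $D_{\A|\KK}[\seq{s}]$-action on $M^\A[\fsl]$ guaranteed by \Cref{wellDefinedActionOnMR: P}, and on the other the $D_{\R|\KK}[\seq{s}]$-action on $M^\R[\fsl]$. \Cref{MRfs_des} is precisely the bridge that reconciles these on the common submodule, so once it is in hand the argument becomes essentially formal, since lifting a polynomial-in-$\seq{s}$ operator coefficientwise commutes with the $\seq{s}$-structure used throughout.
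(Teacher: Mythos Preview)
Your proposal is correct and follows essentially the same route as the paper: the first divisibility comes from the proof of \Cref{ThmBSVarieties} via the differential splitting, and the reverse divisibility is obtained by lifting the operators $\delta_{\pt{c}}$ to extensions $\widetilde{\delta}_{\pt{c}}\in D_{\R|\KK}[\seq{s}]$ and observing that the functional equation persists in $M^\R[\fsl]$. The paper's proof is slightly terser in that it does not explicitly cite \Cref{MRfs_des} for the persistence step, but your invocation of it is exactly the content being used.
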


\begin{proof}
   From the proof of \Cref{ThmBSVarieties}, we obtain that $b^\A_{\seq{f}, g}(s)$ divides $b^\R_{\seq{f}, g}(s)$.
   We now prove that $b^\R_{\seq{f}, g}(s)$ divides $b^\A_{\seq{f}, g}(s)$.
   Choose $\delta_{\pt{c}} \in D_{\A|\KK}[\seq{s}]$ for which
   \[
      b^\A_{\seq{f}, g}(s_1+\cdots+ s_\ell)  g\fsll =
      \sum_{\crampedclap{\substack{\pt{c} \in \ZZ^\ell \\ |\pt{c}|=1} }} \delta_{\pt{c}}\act\ \   \prod_{\crampedclap{i\in\nsupp(\pt{c})}}
      \quad \binom{s_i}{-c_i}  f^{c_1}_1\cdots f^{c_\ell}_\ell g \fsll.
   \]
   The above equation remains true after replacing each $\delta_{\pt{c}} \in D_{\A|\KK}[\seq{s}]$ with an extension $\widetilde{\delta}_{\pt{c}} \in D_{\R|\KK}[\seq{s}]$, justifying our claim.
\end{proof}




Using a mild generalization of a recent result by Musta{\c{t}}{\u{a}} \cite{Mustata2019} we can prove that the Bernstein--Sato polynomial does not depend on the choice of generators for an ideal $I\subseteq \A$, justifying the notation $b^\A_{I, g}(s)$, or $b^\A_I(s)$ if $g=1$. 

\begin{theorem}[Musta{\c{t}}{\u{a}}]\label{ThmExtMustata}
Let $A$ be an algebra over a field $\KK$ of characteristic zero, that is finitely generated over $\KK$.
Assume that $\A$  is a direct summand of a polynomial  ring over $\KK$. Given nonzero $\seq{f} = \seq[\ell]{f}\in A$ and $g\in A$, set $h = y_1f_1+\cdots + y_\ell f_\ell \in A[y_1,\dots, y_\ell]$, where $y_1,\dots, y_\ell$ are  indeterminates. Then \[b^A_{\seq{f},g}(s)=\frac{{ b}^{A[y_1,\dots, y_\ell]}_{h,g}(s) }{(s+1)}.\]
Furthermore, $b^A_{\seq{f},g}(s)$ only depends on the ideal generated by $\seq{f}$.
\end{theorem}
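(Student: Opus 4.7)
The plan is to formally transplant Musta\c{t}\u{a}'s argument from \cite{Mustata2019} into the direct summand setting, using the $D_{\A|\KK}[s]$-module structure on $M^\A[\fsl]$ supplied by \Cref{wellDefinedActionOnMR: P}. The crucial preliminary observation is that, since $\A \subseteq \R$ is a direct summand of a polynomial ring $\R$ with $\A$-linear splitting $\beta$, the polynomial ring $\A[\seq{y}]$ is a direct summand of $\R[\seq{y}]$ via the induced splitting $\beta \otimes \operatorname{id}_{\KK[\seq{y}]}$; consequently, \Cref{ThmBSVarieties} guarantees that $b^{\A[\seq{y}]}_{h,g}(s)$ exists, and \Cref{wellDefinedActionOnMR: P} provides the $D_{\A[\seq{y}]|\KK}[s]$-module structure on $M^{\A[\seq{y}]}[\boldsymbol{h^s}]$. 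By \Cref{functional_formal_special_ideal}, any formal identity we produce in either $M^\A[\fsl]$ or $M^{\A[\seq{y}]}[\boldsymbol{h^s}]$ can equivalently be checked after specialization at integer values.

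I would then prove the equality by establishing the two divisibilities $b^\A_{\seq{f},g}(s)\mid b^{\A[\seq{y}]}_{h,g}(s)/(s+1)$ and $b^{\A[\seq{y}]}_{h,g}(s)/(s+1)\mid b^\A_{\seq{f},g}(s)$ separately. For the first, I would start with a minimal functional equation realizing $b^{\A[\seq{y}]}_{h,g}(s)$ in $M^{\A[\seq{y}]}[\boldsymbol{h^s}]$ and, following Musta\c{t}\u{a}, extract from it a functional equation for $\seq{f}$ in $M^\A[\fsl]$. The key computation is that the derivations $\pd{y_i}$ act via the chain rule as $\pd{y_i}\act \boldsymbol{h^s} = s f_i h^{-1}\boldsymbol{h^s}$, so that iterated compositions of the $\pd{y_i}$, together with multiplications by monomials in the $f_i$, manufacture exactly the terms $\prod_{i\in\nsupp(\pt{c})}\binom{s_i}{-c_i}f^{c_1}_1\cdots f^{c_\ell}_\ell$ that appear in the definition of $b^\A_{\seq{f},g}(s)$, after making the identification $s \leftrightarrow s_1+\cdots+s_\ell$ and discarding an overall factor of $(s+1)$ coming from the passage between the single- and multi-element conventions. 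For the reverse divisibility, I would start with a minimal functional equation for $\seq{f}$ in $M^\A[\fsl]$ and multiply it through by $(s+1)$, then use the identity $h\,\boldsymbol{h^s} = \sum_i y_i f_i\,\boldsymbol{h^s}$ to bundle the multi-element operators into a single-element operator acting on $h\,\boldsymbol{h^s}$; this produces a functional equation for $h$ in $M^{\A[\seq{y}]}[\boldsymbol{h^s}]$ whose coefficient polynomial is $(s+1)\cdot b^\A_{\seq{f},g}(s)$.

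The hard part is making sure that every operator we manipulate actually lies in $D_{\A|\KK}[s]$ or $D_{\A[\seq{y}]|\KK}[s]$, rather than only in the ambient rings $D_{\R|\KK}[s]$ or $D_{\R[\seq{y}]|\KK}[s]$. This is precisely where the direct summand hypothesis enters: by \Cref{MRfs_dds}, both $M^\A[\fsl]$ and $M^{\A[\seq{y}]}[\boldsymbol{h^s}]$ are differential direct summands of $M^\R[\fsl]$ and $M^{\R[\seq{y}]}[\boldsymbol{h^s}]$, with explicit differential splittings $\Theta$. I would therefore first carry out Musta\c{t}\u{a}'s construction in the polynomial ring setting, producing functional equations in $M^\R[\fsl]$ and $M^{\R[\seq{y}]}[\boldsymbol{h^s}]$, and then apply $\Theta$ to descend to valid equations in $M^\A[\fsl]$ and $M^{\A[\seq{y}]}[\boldsymbol{h^s}]$, replacing each $\R$-differential operator $\eta$ by $\beta \circ \eta|_\A$. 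This guarantees that the resulting operators are genuine elements of the correct differential operator rings.

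Finally, the independence of $b^\A_{\seq{f},g}(s)$ from the choice of generators of the ideal $I = (\seq{f})$ is a direct consequence of the displayed formula, because the right-hand side manifestly depends only on the ideal generated by $h$ in $\A[\seq{y}]$: if $\seq{f}'$ is another generating sequence for $I$, then after possibly padding with zeros to equalize lengths, the corresponding element $h' = \sum y_i f_i'$ is related to $h$ by an $\A$-linear automorphism of $\A[\seq{y}]$, under which Bernstein--Sato polynomials are preserved.
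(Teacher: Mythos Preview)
Your overall plan --- transplanting Musta\c{t}\u{a}'s argument from \cite{Mustata2019} using the $D$-module structures supplied by \Cref{wellDefinedActionOnMR: P}, after observing that $\A[\seq{y}]$ is again a direct summand of a polynomial ring so that $b^{\A[\seq{y}]}_{h,g}(s)$ exists and has $(s+1)$ as a factor --- is exactly what the paper does; its proof is omitted with precisely these remarks. The two-divisibility outline in your second paragraph is also the right shape.

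The problem is your ``hard part'' paragraph. Musta\c{t}\u{a}'s manipulations use only the derivations $\partial_{y_i}\in D_{\A[\seq{y}]|\KK}$, multiplications by elements of $\A[\seq{y}]$, the $y$-grading, and whatever operators already appear in the given functional equation; they never require rewriting an operator in terms of $\partial_{x_i}$. Hence a functional equation with operators in $D_{\A[\seq{y}]|\KK}[s]$ (respectively $D_{\A|\KK}[\seq{s}]$) is transformed directly into one with operators in $D_{\A|\KK}[\seq{s}]$ (respectively $D_{\A[\seq{y}]|\KK}[s]$), with no detour through $\R$. Your proposed detour is not just unnecessary but does not deliver the claim: without differential extensibility you cannot regard $D_{\A|\KK}$ inside $D_{\R|\KK}$, so you cannot run Musta\c{t}\u{a}'s construction ``in $\R$'' starting from the \emph{minimal} $\A$-equation; and if instead you start from the minimal $\R$-equations and then apply $\Theta$, you only obtain $b^{\A}_{\seq{f},g}\mid b^{\R[\seq{y}]}_{h,g}/(s+1)$ and $b^{\A[\seq{y}]}_{h,g}\mid b^{\R[\seq{y}]}_{h,g}$, which do not combine to give $b^{\A}_{\seq{f},g}\mid b^{\A[\seq{y}]}_{h,g}/(s+1)$.

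A minor point on the last paragraph: ``padding with zeros'' conflicts with the hypothesis that the $f_i$ are nonzero. The clean fix is to pass to the concatenated generating sequence $\seq{f},\seq{f}'$ and observe that an $\A$-linear change of variables in $\A[\seq{y},\seq{z}]$ carries $\sum_i y_i f_i+\sum_j z_j f'_j$ to $\sum_i y_i f_i$, together with the observation that adjoining extra variables not appearing in $h$ or $g$ leaves $b_{h,g}$ unchanged.
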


The proof of this result is essentially the same as the one presented in work previously mentioned \cite[Theorem~1.1 and Remark~2.1]{Mustata2019}, so we omit it.
We only note that the hypothesis that $A$ (and hence also $A[y_1,\dots, y_\ell]$) is a direct summand of a polynomial ring guarantees the existence of the relative Bernstein--Sato polynomial ${ b}^{A[y_1,\dots, y_\ell]}_{h,g}(s)$, and that $(s+1)$ is a factor of this polynomial, since $h$ involves some variables that $g$ does not. 

\Cref{ThmBSVarieties,ThmExtMustata} allow us to define the (relative) Bernstein--Sato polynomial of an ideal in a direct summand of  a polynomial ring.

\begin{definition}[Relative Bernstein--Sato polynomial in a direct summand of a polynomial 
ring]
Let $A$ be an algebra over a field $\KK$ of characteristic zero that is finitely generated over $\KK$.
Assume that $\A$  is a direct summand of a polynomial  ring over $\KK$.
   Let $I$ and $g$ be an ideal and an element of $\A$.
   Given generators $\seq{f} = \seq[\ell]{f}$ for $I$, we call the polynomial $b^\A_{\seq{f},g} \in \QQ[s]$ the   
   \emph{Bernstein--Sato polynomial of $I$ relative to $g$}, and denote it $b^\A_{I,g}(s)$.
   If $g=1$, we call this the \emph{Bernstein--Sato polynomial of $I$}, and denote it $b^\A_I(s)$.
\end{definition}

\section{$V$-filtrations} \label{Vfilt}

The theory of the \emph{$V$-filtration}, originally defined by Malgrange \cite{MalgrangeVfil} and Kashiwara \cite{KashiwaraVfil}, is a central object in the study of $D$-modules over a smooth variety.
Our goal in this section is to extend this theory to the setting of a differentially extensible summand of a polynomial ring.
In the next section, we  apply this newly-developed theory to the study of multiplier ideals.

Suppose that $\KK$ has characteristic zero.
Let  $\R=\KK[\seq{x}]=\KK[\seq[d]{x}]$ be a polynomial ring over $\KK$  and set $X=\Spec(\R)$.
Let $I\subseteq \R$ be an ideal generated by $\seq{f}=\seq[\ell]{f} \in \R$.
We may assume that $I$ defines a smooth subvariety,  by considering the graph embedding $i_{\seq{f}} \colon X \to X \times \mathbb{A}^\ell$ mapping $x \mapsto (x, f_1(x), \dots , f_\ell(x))$, and then replacing $X$ by $ X \times \mathbb{A}^\ell$ and each $f_i$ by the projection onto the $i$th coordinate of $\mathbb{A}^\ell$.
Therefore, if $\seq{t}=\seq[\ell]{t}$ is a system of coordinates of $\mathbb{A}^\ell$,  we may simply assume that $I$ is  $\ideal{\seq{t}}=\ideal{\seq[\ell]{t}} \subseteq \R[\seq[\ell]{t}]$. 

The $V$-filtration along $\ideal{\seq{t}}$ on the ring of $\KK$-linear differential operators  $D_{\R[\seq{t}]|\KK}$ is obtained by assigning degree $0$ to the elements of $D_{\R|\KK}$, degree $1$ to the indeterminates $\seq[\ell]{t}$, and degree $-1$ to the partial derivatives $\pd{t_1},\ldots,\pd{t_\ell}$.
Equivalently, a differential operator $\delta$ lies in $ V^i D_{\R[\seq{t}]|\KK}$ if and only if $\delta \act \ideal{\seq{t}}^j \subseteq  \ideal{\seq{t}}^{j+i} \ \text{for all} \ j \in \ZZ$, where we use the convention that $\ideal{\seq{t}}^j = \R[\seq{t}]$ for negative exponents $j$.

For a given $D_{\R|\KK}$-module $N$, one considers its direct image with respect to the graph embedding
\[{i_{\seq{f}}}_\ast(N)\coloneqq  N\otimes_{\KK} \KK[\pd{t_1}, \dots , \pd{t_\ell}],\]
which is a $D_{\R[\seq{t}]|\KK}$-module isomorphic to the local cohomology module
\[H^\ell\wrttminusf(N[\seq{t}]),\]
where $\idtminusf$ denotes the ideal $\ideal{t_1-f_1, \dots , t_\ell -f_\ell}$ of $\R[\seq{t}]$.
If $N$ is a \emph{regular holonomic} $D_{\R|\KK}$-module with \emph{quasi-unipotent monodromy}, then there exists a $V$-filtration along $\ideal{\seq{t}}$ on ${i_{\seq{f}}}_\ast(N)$, which is a decreasing filtration $V^\alpha\wrtt {i_{\seq{f}}}_\ast(N)$ indexed over the rational numbers satisfying certain conditions (see  \Cref{VfilM}).
The existence of such a filtration is a result of Malgrange \cite{MalgrangeVfil} and Kashiwara \cite{KashiwaraVfil}, and is essentially equivalent to the existence of relative Bernstein--Sato polynomials in a generalized sense.  We do not go into detail on the definition of regular holonomic modules with quasi-unipotent monodromy, so we encourage the interested reader to take a look at \emph{loc.\,cit.}; we only point out that the ring $\R$ itself, and the local cohomology modules of the form $H^i_I(\R)$, satisfy this property.

Finally, one defines the $V$-filtration along $\seq{f}$ of the $D_{\R|\KK}$-module $N$ as
\[V^\alpha\wrtf N \coloneqq V^\alpha\wrtt {i_{\seq{f}}}_\ast(N) \cap N.\]
It is known  that the $V$-filtration only depends on the ideal generated by $\seq{f}$, and not on the choice of generators \cite[Proposition~2.6]{BMS2006a}.

\subsection{$V$-filtrations for nonregular rings} \label{Vfilt_non_reg}

In order to develop a theory of $V$-filtrations for a not-necessarily-regular ring, we follow the same ideas as in the smooth case and define them using a set of axioms.


\begin{definition}[$V$-filtration on $D_{\T[\seq{t}]|\KK}$] \label{DefVfil}
Suppose that $\KK$ has  characteristic zero. Let $\T$ be a Noetherian commutative   $\KK$-algebra,  and let $\T[\seq{t}]=\T[\seq[\ell]{t}]$ be a polynomial ring over $\T$.  The $V$-filtration along the ideal $\ideal{\seq{t}}=\ideal{\seq[\ell]{t}}$ on the ring of differential operators  $D_{\T[\seq{t}]|\KK}$ is the filtration indexed by integers $i\in \mathbb{Z}$ defined by
   \begin{equation*}
   V^{i}\wrtt {D_{\T[\seq{t}]|\KK}} = \{ \delta \in D_{\T[\seq{t}]|\KK} :  \delta \act \ideal{\seq{t}}^j \subseteq  \ideal{\seq{t}}^{j+i} \ \text{for all} \ j \in \ZZ \},
\end{equation*}
   where we use the convention that $\ideal{\seq{t}}^j = \T[\seq{t}]$ for integers $j \leq 0$.
\end{definition}

\begin{remark}
   As a graded $D_{\T|\KK}$-module, we have  the following description:
   \begin{equation*}
      V^{i}\wrtt {D_{\T[\seq{t}]|\KK}}
      = \bigoplus_{\substack{\pt{a},\pt{b} \in \NN^\ell \\ |\pt{a}|-|\pt{b}|\,\geq \, i}} D_{\T|\KK} \cdot  t_1^{a_1}\cdots t_\ell^{a_\ell} \pd{t_1}^{b_1} \cdots \pd{t_\ell}^{b_\ell},
   \end{equation*}
   where, given $\pt{u} = \pt[\ell]{u} \in \NN^\ell$, we denote $|\pt{u}| = u_1 + \cdots + u_\ell$.
\end{remark}

The $V$-filtration along the ideal $\ideal{\seq{t}}=\ideal{\seq[\ell]{t}}$ on a $D_{\T[\seq{t}]|\KK}$-module $M$ is defined axiomatically as follows.

\begin{definition}[$V$-filtration on a $D_{\T[\seq{t}]|\KK}$-module along indeterminates] \label{VfilM}
   Let $M$ be a $D_{\T[\seq{t}]|\KK}$-module.
   A \emph{$V$-filtration on $M$ along the ideal $\ideal{\seq{t}}=\ideal{\seq[\ell]{t}}$} is a decreasing filtration $\{V^{\alpha}\wrtt M\}_\alpha$ on $M$, indexed by $\alpha\in \QQ$, satisfying the following conditions.
   \begin{enumerate}
      \item For all $\alpha\in \QQ$, $V^{\alpha}\wrtt M$ is a Noetherian  $V^{0}\wrtt {D_{\T[\seq{t}]|\KK}}$-submodule of $M$.
      \item The union of the $V^{\alpha}\wrtt M$, over all $\alpha \in \QQ$, is $M$.
      \item  $V^{\alpha}\wrtt M = \bigcap_{\gamma < \alpha} V^{\gamma}\wrtt M$ for all $\alpha$, and the set $J$  consisting of all $\alpha\in \QQ$ for which $V^{\alpha}\wrtt M \neq \bigcup_{\gamma > \alpha} V^{\gamma}\wrtt M$ is discrete.
      \item For all $\alpha \in \QQ$ and all $1 \leq i \leq \ell$,
      \[ t_i \act V^\alpha\wrtt M \subseteq V^{\alpha+1}\wrtt M \, \text{ and } \,\pd{t_i} \act V^\alpha\wrtt M \subseteq V^{\alpha-1}\wrtt M, \] i.e., the filtration is compatible with the $V$-filtration on $D_{\T[\seq{t}]|\KK}$.
      \item For all $\alpha\gg 0$,  $\sum^\ell_{i=1}\left( t_i \act V^\alpha\wrtt M\right) = V^{\alpha+1}\wrtt M$.
      \item For all $\alpha \in \QQ$,
      \[\sum_{i=1}^\ell \pd{t_i} t_i - \alpha\]
      acts nilpotently on $V^\alpha\wrtt M / (\bigcup_{\gamma > \alpha} V^{\gamma}\wrtt M)$.
   \end{enumerate}
\end{definition}
		
\begin{remark}
   Let $J$ be the set 
   introduced in Condition~(3) above.
   It follows easily from Condition~(5) that for $\alpha\gg 0$, $\alpha+1 \in J$ implies that $\alpha \in J$.
   By Condition~(1), the set $J$ is bounded below.
   Thus, given the fact that these two conditions hold, Condition~(3) is equivalent to the existence of some $m\in \NN$ such that for every $n\in \ZZ$ and all $\alpha\in \left( \frac{n-1}{m}, \frac{n}{m} \right]$, $V^\alpha\wrtt M = V^{n/m}\wrtt M.$
\end{remark}

As in the case of polynomial rings, the $V$-filtration is unique in this broader context, provided it exists. 
The proof of this fact is analogous to that of the original result in the case that $\T$ is a polynomial ring over $\KK$ (see \cite[Proposition~1.3]{SurveyBudur}), so we omit it. 
	
\begin{proposition}
   Let $M$ be a finitely generated $D_{\T[\seq{t}]|\KK}$-module. If a $V$-filtration on $M$ along $\ideal{\seq{t}}$ exists, then it is unique.
   \qed
\end{proposition}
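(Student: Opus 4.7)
The plan is to adapt Saito's classical uniqueness argument for $V$-filtrations to this axiomatic setting. Let $V^\bullet$ and $\widetilde V^\bullet$ be two $V$-filtrations on $M$; by symmetry it suffices to prove $V^\alpha M \subseteq \widetilde V^\alpha M$ for every $\alpha\in \QQ$. The central algebraic object is the ``Euler'' operator $\theta := \sum_{i=1}^\ell \pd{t_i} t_i$. Axiom~(4) immediately yields that $\theta$, and hence every polynomial in $\theta$, preserves each piece $V^\alpha M$ and each piece $\widetilde V^\alpha M$, since $t_i$ raises the index by $1$ and $\pd{t_i}$ lowers it by $1$.

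Fix $m\in V^\alpha M$ and suppose for contradiction that $m\notin \widetilde V^\alpha M$. The left-continuity encoded in axiom~(3) for $\widetilde V^\bullet$, together with axiom~(2), produces a largest $\beta$ with $m\in \widetilde V^\beta M$, and by assumption $\beta < \alpha$. Since $\widetilde J$ is bounded below and discrete, the set $\widetilde J \cap [\beta, \alpha)$ is finite, say $\beta = \beta_0 < \cdots < \beta_k$. Iterated use of axiom~(6) for $\widetilde V^\bullet$ produces integers $N_0,\ldots,N_k$ such that, setting $P(T) := \prod_{j=0}^k (T - \beta_j)^{N_j}$, one obtains $P(\theta) \act m \in V^\alpha M \cap \widetilde V^\alpha M$; the $V^\alpha M$-containment follows from $\theta$-stability.

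Axiom~(6) for $V^\bullet$ now says that $\theta - \alpha$ is locally nilpotent on $V^\alpha M/V^{>\alpha} M$, where $V^{>\alpha} M := \bigcup_{\gamma > \alpha} V^\gamma M$. Writing each $\theta - \beta_j = (\alpha - \beta_j) + (\theta - \alpha)$ as a nonzero scalar plus a nilpotent operator, the factor $(\theta - \beta_j)$, and hence $P(\theta)$ itself, acts invertibly on the finite-length $\KK[\theta]$-submodule generated by the class of $m$ in $V^\alpha M / V^{>\alpha} M$; the inverse is a polynomial $Q(T) \in \KK[T]$ computed as a truncated geometric series. Thus $m \equiv Q(\theta) P(\theta) m \pmod{V^{>\alpha} M}$, and the $\theta$-stability of $\widetilde V^\alpha M$ forces $Q(\theta) P(\theta) m \in \widetilde V^\alpha M$, yielding $m \in \widetilde V^\alpha M + V^{>\alpha} M$.

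To close, one needs $V^{>\alpha} M \subseteq \widetilde V^\alpha M$. Axiom~(5) ensures that for $\alpha \gg 0$, $V^\alpha M$ is built from a fixed $V^0 D_{\T[\seq{t}]|\KK}$-finitely generated piece by multiplication by monomials in $\seq{t}$, and axiom~(4) for $\widetilde V^\bullet$ then gives a uniform bound $V^\alpha M \subseteq \widetilde V^{\alpha - c} M$ for all $\alpha \gg 0$, with $c$ independent of $\alpha$. Iterating the argument of the previous paragraph across the discrete jumping set of $V^\bullet$ strictly reduces the defect $c$, and after finitely many passes the required containment is achieved. The main obstacle I anticipate is precisely this outer iteration: one must verify that the defect genuinely decreases at each stage and that no circular dependency creeps in when comparing the two filtrations at the ``top.'' Once that bookkeeping is carried out, the algebraic core (the nilpotent-plus-unit decomposition of $P(\theta)$ modulo $V^{>\alpha} M$) is purely formal.
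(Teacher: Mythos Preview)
Your proposal is correct and is precisely the classical argument the paper has in mind: the authors omit the proof entirely (note the \qed), pointing to \cite[Proposition~1.3]{SurveyBudur}, which is exactly the Saito-style nilpotent-plus-unit argument you outline. Your worry about the outer iteration is well placed but resolves cleanly: once you have $V^\alpha M \subseteq \widetilde V^\alpha M + V^{>\alpha} M$ for every $\alpha$, iterating through the discrete jump set of $V^\bullet$ gives $V^\alpha M \subseteq \widetilde V^\alpha M + V^{\alpha_k} M$ for each successive jump $\alpha_k$, and the asymptotic bound $V^{\alpha_k} M \subseteq \widetilde V^{\alpha_k - c} M$ absorbs the remainder into $\widetilde V^\alpha M$ as soon as $\alpha_k \geq \alpha + c$ (or, if the jumps are bounded, the stable piece $M_0 = \ideal{\seq{t}}^n M_0$ lies in every $\widetilde V^\gamma M$); there is no circularity, since the asymptotic comparison uses only axioms (1), (2), (4), (5) and not the nilpotency step.
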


Our goal is  to prove the existence of the $V$-filtration for differentially extensible summands of polynomial rings.
For this reason, we introduce certain auxiliary Rees algebras that lead to an equivalent way of defining $V$-filtrations.

\begin{definition} \label{Rees}
Suppose that $\KK$ has  characteristic zero. Let $\T$ be a Noetherian commutative   $\KK$-algebra,  and let $\T[\seq{t}]=\T[\seq[\ell]{t}]$ be a polynomial ring over $\T$.  Given an auxiliary indeterminate $w$, we define the following Rees algebras:
\begin{alignat*}{2}
	&\VV\wrtt D_{\T[\seq{t}]|\KK} &&\coloneqq\bigoplus_{i\in \ZZ} \left( V^i\wrtt D_{\T[\seq{t}]|\KK} \right) w^i \\ 
	&&&= D_{\T|\KK}[ \,t_1 w, \dots , t_\ell w, \pd{t_1} w^{-1}, \dots ,  \pd{t_\ell} w^{-1}] \\
        &\VV^+\wrtt D_{\T[\seq{t}]|\KK} &&\coloneqq  \bigoplus_{i\ge 0} \left( V^i\wrtt D_{\T[\seq{t}]|\KK} \right) w^i\  \oplus\  \bigoplus_{i<0} \left( V^0\wrtt D_{\T[\seq{t}]|\KK}\right) w^i \\ &&&=
        D_{\T|\KK}[  w^{-1}, t_i w, \partial_{t_i} t_j \mid 1 \leq i, j \leq \ell]
\end{alignat*}
Moreover, given a $D_{\T[\seq{t}]|\KK}$-module $M$, for $\alpha \in \QQ$, 
\[
\VV^\alpha\wrtt M \coloneqq \bigoplus_{i\in \ZZ} \left( V^{\alpha+i}\wrtt M \right) w^i.
\]
\end{definition}

In the polynomial ring case, these algebras are Noetherian.
 
\begin{lemma} \label{LemmaVAlgebrasNoeth}
   Suppose that $\KK$ has characteristic zero.
   Let $\R=\KK[\seq{x}]=\KK[\seq[d]{x}]$, and consider the polynomial ring $\R[\seq{t}]=\R[\seq[\ell]{t}]$ over $\R$.
   Then $V^0\wrtt D_{\R[\seq{t}]|\KK}$, $\VV\wrtt D_{\R[\seq{t}]|\KK}$, and $\VV^+\wrtt D_{\R[\seq{t}]|\KK}$ are Noetherian rings.
\end{lemma}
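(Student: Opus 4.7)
The plan is to equip each ring with the order filtration inherited from the Weyl algebra $D_{\R[\seq{t}]|\KK}$ (extended to $D_{\R[\seq{t}]|\KK}[w,w^{-1}]$ for the two Rees algebras), compute the associated graded in each case, and then invoke the classical principle that a ring carrying a positive exhaustive filtration whose associated graded is Noetherian is itself both left and right Noetherian; see, for instance, \cite[Theorem~1.6.9]{McC_Rob}.

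First I would treat $V^{0}\wrtt{D_{\R[\seq{t}]|\KK}}$. Because $[F^{m},F^{n}]\subseteq F^{m+n-1}$ inside the Weyl algebra, the associated graded with respect to the order filtration is commutative, and it embeds into the polynomial ring $\KK[x_i,\xi_i,t_j,\eta_j]$ of principal symbols, where $\xi_i$ and $\eta_j$ denote the symbols of $\partial_{x_i}$ and $\partial_{t_j}$ respectively. A direct inspection of the monomial basis $\{x^{\pt{\alpha}}\partial_x^{\pt{\beta}}t^{\pt{a}}\partial_t^{\pt{b}}:|\pt{a}|\geq|\pt{b}|\}$ of $V^{0}\wrtt{D_{\R[\seq{t}]|\KK}}$ shows that its associated graded is precisely the $\KK$-subalgebra generated by $x_i,\xi_i,t_j$ and the products $t_k\eta_j$: any symbol $t^{\pt{a}}\eta^{\pt{b}}$ with $|\pt{a}|\geq|\pt{b}|$ can be obtained by pairing each $\eta$-factor with a $t$-factor. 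This is a finitely generated commutative $\KK$-algebra, hence Noetherian by the Hilbert basis theorem, and the filtered-to-graded principle yields the Noetherianity of $V^{0}\wrtt{D_{\R[\seq{t}]|\KK}}$.

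The Rees algebras $\VV\wrtt{D_{\R[\seq{t}]|\KK}}$ and $\VV^{+}\wrtt{D_{\R[\seq{t}]|\KK}}$ are handled in parallel. Each carries a natural order filtration compatible with the $w$-grading, positive and exhaustive, whose associated graded embeds into $\KK[x_i,\xi_i,t_j,\eta_j][w,w^{-1}]$. In the $\VV$ case, the graded ring is generated by $x_i,\xi_i,t_j w,\eta_j w^{-1}$ together with $w^{-1}$, which lies in $\VV\wrtt{D_{\R[\seq{t}]|\KK}}$ because $1\in V^{-1}\wrtt{D_{\R[\seq{t}]|\KK}}$; in the $\VV^{+}$ case, the generators are $x_i,\xi_i,w^{-1},t_j w$, and $t_k\eta_j$. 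Both associated graded rings are finitely generated commutative $\KK$-algebras, hence Noetherian, so the same filtered-to-graded principle delivers the Noetherianity of $\VV\wrtt{D_{\R[\seq{t}]|\KK}}$ and $\VV^{+}\wrtt{D_{\R[\seq{t}]|\KK}}$.

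The principal obstacle, and the only genuinely non-trivial bookkeeping, is verifying that the associated graded of each ring coincides with the subalgebra of $\KK[x_i,\xi_i,t_j,\eta_j][w,w^{-1}]$ generated by the listed symbols; equivalently, that the given elements suffice to generate each ring up to lower-order operators. Once this symbol-level identification is confirmed, the Noetherianity of the three rings reduces to the Hilbert basis theorem together with the standard filtered-to-graded lifting result.
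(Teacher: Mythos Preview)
Your proposal is correct and follows essentially the same route as the paper: equip each ring with the order filtration (refined by the $w$-grading for the Rees algebras), identify the associated graded as a finitely generated commutative $\KK$-subalgebra of the symbol ring, and invoke the standard filtered-to-graded lifting principle. Your list of generators for $\gr(\VV\wrtt D_{\R[\seq{t}]|\KK})$ includes $w^{-1}$, which the paper omits; your version is in fact the accurate one, since $1\in V^{-1}$ forces $w^{-1}$ into $\VV$, but this does not affect the argument either way.
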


\begin{proof}
   We aim to show that each ring admits a filtration for which the associated graded ring is commutative and Noetherian.
   Let $\mathcal{F}$ denote the order filtration on $D_{\R[\seq{t}]|\KK}$, so that
   \[\gr_\mathcal{F}(D_{\R[\seq{t}]|\KK})\cong \KK[\seq{x},\seq{y},\seq{t},\seq{u}],\]
   where $\seq{y}=\seq[d]{y}$ and $\seq{u}=\seq[\ell]{u}$ correspond to the images of the partial derivatives $\pd{x_1}, \ldots, \pd{x_d}$ and $\pd{t_1},\ldots,\pd{t_\ell}$. 
   Let $\mathcal{G}_n = \left( V^{0}\wrtt {D_{\R[\seq{t}]|\KK}} \right) \cap \mathcal{F}_n$, and consider the ring inclusion
   \[\varphi\colon \gr_\mathcal{G}(V^{0}\wrtt {D_{\R[\seq{t}]|\KK}})\to \gr_\mathcal{F}(D_{\R[\seq{t}]|\KK}).\]
   Set $\pt{a}=\pt[\ell]{a}$ and $\pt{b}=\pt[\ell]{b}$.
   We have that 
	\begin{align*}
	\gr_\mathcal{G}(V^{0}\wrtt {D_{\R[\seq{t}]|\KK}}) \cong \IM(\varphi)& \cong  \KK[\seq{x},\seq{y}, t_1^{a_1} \cdots t_{\ell}^{a_\ell} u_1^{b_1} \cdots u_{\ell}^{b_\ell}]_{ \{|\pt{a}| \geq |\pt{b}|\} }\\
	&\cong \KK[\seq{x},\seq{y},\seq{t}, t_i u_j]_{\{i,j\}},
	\end{align*}
	which is a finitely generated $\KK$-algebra, and so, a Noetherian ring.
        We conclude that  $V^{0}\wrtt {D_{\R[\seq{t}]|\KK}}$ is left- and right-Noetherian since $\mathcal{F}_0\cong \R[ \seq{t}]$.
	
	Now, consider the subring $\VV\wrtt D_{\R[\seq{t}]|\KK}$ of $D_{\R[\seq{t}]|\KK}[w]$, which is $\ZZ$-graded as a polynomial ring in $w$. We refine this to a $\ZZ\times \NN$ grading by taking the filtration
        \[\mathcal{G}'_{j}=\{ \delta \in \VV\wrtt D_{\R[\seq{t}]|\KK} : \delta\in \mathcal{F}_j\otimes_{D_{\R[\seq{t}]|\KK}} D_{\R[\seq{t}]|\KK}[w] \}.\]
	We obtain an inclusion $\varphi\colon \gr_{\mathcal{G}'}(\VV\wrtt D_{\R[\seq{t}]|\KK})\to \gr_\mathcal{F}(D_{\R[\seq{t}]|\KK})[w]$.
        With the same coordinates that we used for $\gr_\mathcal{F}(D_{\R[\seq{t}]|\KK})$ above, 
	 	\begin{align*}
	 \gr_{\mathcal{G}'}(\VV\wrtt D_{\R[\seq{t}]|\KK}) \cong \IM(\varphi)& \cong  \KK[\seq{x},\seq{y}, t_1^{a_1} \cdots t_{\ell}^{a_\ell} u_1^{b_1} \cdots u_{\ell}^{b_\ell} w^j]_{\{|a| \geq |b| + j\} }\\
	 &\cong \KK[\seq{x},\seq{y},t_1w,\ldots, t_\ell w, u_1 w^{-1},\dots, u_\ell w^{-1}],
	 \end{align*}
	 which is again a finitely generated $\KK$-algebra, and hence Noetherian. We conclude that $\VV\wrtt D_{\R[\seq{t}]|\KK}$ is left- and right-Noetherian.
	 
	 Finally, consider $\VV^+\wrtt D_{\R[\seq{t}]|\KK} \subseteq \VV\wrtt D_{\R[\seq{t}]|\KK} \subseteq D_{\R[\seq{t}]|\KK}[w]$. We have that
         \begin{align*}
	 \gr_{\mathcal{G}'}(\VV^+D_{\R[\seq{t}]|\KK}) & \cong  \KK[\seq{x},\seq{y}, t_1^{a_1} \cdots t_{\ell}^{a_\ell} u_1^{b_1} \cdots u_{\ell}^{b_\ell}  w^j]_{\{|\pt{a}| \geq |\pt{b}| + \max\{j,0\}\} }\\
	 &\cong \KK[\seq{x},\seq{y}, t_1w,\ldots, t_\ell w, w^{-1}, t_i u_j]_{\{ i,j\} },
	 \end{align*}
	 and by the same argument above, we are done.
	\end{proof}

Using the algebras introduced in  \Cref{Rees}, we can give another characterization of the stabilization conditions appearing in the definition of the $V$-filtration. 

\begin{lemma}\label{LemmaEquivVfilt}
Suppose that $\KK$ has  characteristic zero. Let $\T$ be a commutative  Noetherian   $\KK$-algebra,  and let $\T[\seq{t}]=\T[\seq[\ell]{t}]$ be a polynomial ring over $\T$.
	Given a $D_{\T[\seq{t}]|\KK}$-module  $M$, a decreasing filtration $\{V^{\alpha}\wrtt M\}_\alpha$ on $M$ indexed by $\alpha\in \QQ$
	satisfies conditions \textup{(1)} and \textup{(5)} of \Cref{VfilM} if
	\begin{enumerate}[label={\textup{(\alph*)}}]
	\item $\VV^\alpha\wrtt M$ is a Noetherian $\VV^+\wrtt D_{\T[\seq{t}]|\KK}$-module for all $\alpha\in \QQ\cap [0,1)$.  
	\end{enumerate}
\end{lemma}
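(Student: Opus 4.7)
My plan is to translate conditions (1) and (5) into graded statements about the Rees module $\VV^\alpha\wrtt M$ and exploit its finite generation over $\VV^+\wrtt D_{\T[\seq{t}]|\KK}$ for each $\alpha\in[0,1)\cap\QQ$. Any $\alpha'\in\QQ$ can be written uniquely as $\alpha'=\alpha+i$ with $\alpha\in[0,1)\cap\QQ$ and $i\in\ZZ$, and under this decomposition $V^{\alpha'}\wrtt M$ is precisely the degree-$i$ homogeneous component of $\VV^\alpha\wrtt M$. Similarly, the degree-zero homogeneous component of $\VV^+\wrtt D_{\T[\seq{t}]|\KK}$ is $V^0\wrtt D_{\T[\seq{t}]|\KK}$. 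The argument will need the structural identity $V^n\wrtt D_{\T[\seq{t}]|\KK}=\sum_{j=1}^\ell t_j\cdot V^{n-1}\wrtt D_{\T[\seq{t}]|\KK}$ for every integer $n\geq 1$; equivalently, the positive-degree part of $\VV^+\wrtt D_{\T[\seq{t}]|\KK}$ is generated as a left module over its degree-zero part by the elements $\{t_j w\}_{j=1}^\ell$.

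To obtain condition (1), I would fix $\alpha'=\alpha+i$ as above and take an arbitrary ascending chain $N_1\subseteq N_2\subseteq\cdots$ of $V^0\wrtt D_{\T[\seq{t}]|\KK}$-submodules of $V^{\alpha'}\wrtt M$. Viewing each $N_k$ as sitting in degree $i$ of $\VV^\alpha\wrtt M$, let $\widetilde{N}_k$ be the graded $\VV^+\wrtt D_{\T[\seq{t}]|\KK}$-submodule it generates. Since only the degree-zero summand of $\VV^+\wrtt D_{\T[\seq{t}]|\KK}$ preserves degree $i$, the degree-$i$ component of $\widetilde{N}_k$ equals $V^0\wrtt D_{\T[\seq{t}]|\KK}\cdot N_k\cdot w^i=N_k\cdot w^i$, the last equality holding because $N_k$ is already $V^0$-stable. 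Noetherianity of $\VV^\alpha\wrtt M$ over $\VV^+\wrtt D_{\T[\seq{t}]|\KK}$ then forces the chain $(\widetilde{N}_k)_k$, hence also $(N_k)_k$, to stabilize.

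For condition (5), I would fix $\alpha\in[0,1)\cap\QQ$ and pick finitely many homogeneous generators $m_1w^{k_1},\ldots,m_rw^{k_r}$ of $\VV^\alpha\wrtt M$, setting $K_\alpha=\max_i k_i$. For any integer $n\geq K_\alpha+1$, one has $n-k_i\geq 1$ for every $i$, so the degree-$n$ component of $\VV^\alpha\wrtt M$ equals $\sum_i V^{n-k_i}\wrtt D_{\T[\seq{t}]|\KK}\cdot m_i$. Applying the structural identity to each factor and collecting the $t_j$'s to the left rewrites this as $\sum_j t_j\sum_i V^{n-k_i-1}\wrtt D_{\T[\seq{t}]|\KK}\cdot m_i=\sum_j t_j\cdot V^{\alpha+n-1}\wrtt M$, yielding $V^{\alpha+n}\wrtt M=\sum_{j=1}^\ell t_j\cdot V^{\alpha+n-1}\wrtt M$, which is exactly condition (5) at $\alpha'=\alpha+n$.

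The hard part will be establishing the structural identity $V^n\wrtt D_{\T[\seq{t}]|\KK}=\sum_{j=1}^\ell t_j\cdot V^{n-1}\wrtt D_{\T[\seq{t}]|\KK}$ for $n\geq 1$. In the polynomial case it can be read off from the explicit presentation of the Rees algebra used in the proof of \Cref{LemmaVAlgebrasNoeth}; the corresponding statement for a general Noetherian $\KK$-algebra $\T$ requires an analogous normal-form analysis of elements of $V^n\wrtt D_{\T[\seq{t}]|\KK}$ relative to the $\seq{t}$-filtration, and this is the step I would expect to demand the most care.
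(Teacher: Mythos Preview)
Your proposal is correct and follows essentially the same route as the paper: for (1), lift an ascending chain of $V^0\wrtt D_{\T[\seq{t}]|\KK}$-submodules of $V^{\alpha'}\wrtt M$ to a chain of $\VV^+\wrtt D_{\T[\seq{t}]|\KK}$-submodules of $\VV^\alpha\wrtt M$, use Noetherianity to stabilize, and read off the degree-$i$ component; for (5), use a finite set of homogeneous generators of $\VV^\alpha\wrtt M$ and the identity $V^{n}\wrtt D_{\T[\seq{t}]|\KK}=\sum_{j=1}^\ell t_j\cdot V^{n-1}\wrtt D_{\T[\seq{t}]|\KK}$ for $n\geq 1$.

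The one place you are overcautious is your final paragraph. The ``structural identity'' is not the hard part; it is immediate from the Remark following \Cref{DefVfil}, which gives
\[
V^{i}\wrtt D_{\T[\seq{t}]|\KK}=\bigoplus_{\substack{\pt{a},\pt{b}\in\NN^\ell\\ |\pt{a}|-|\pt{b}|\geq i}} D_{\T|\KK}\cdot t^{\pt{a}}\,\pd{}^{\pt{b}}
\]
for \emph{any} Noetherian $\KK$-algebra $\T$ in characteristic zero (this uses only that $\T[\seq{t}]=\T\otimes_\KK \KK[\seq{t}]$, so differential operators split as $D_{\T|\KK}\otimes_\KK D_{\KK[\seq{t}]|\KK}$). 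For $i\geq 1$ every monomial in the display has $|\pt{a}|\geq 1$, hence is divisible on the left by some $t_j$, giving $V^i=\sum_j t_j\,V^{i-1}$ at once. The paper uses this identity without comment in the line $V^{\alpha+d+1}\wrtt M=\sum \ideal{\seq{t}}\act V^{d-d_i}\wrtt D_{\T[\seq{t}]|\KK}\,m_i$; no normal-form analysis beyond the Remark is needed.
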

     
\begin{proof}
We first show that  (a) implies (1). Let $\{ M_j\}$ an ascending chain of $V^0_{\overline{t}} D_{\T[\seq{t}]|\KK}$-submodules of 
$V^\alpha_{\overline{t}} M$. We consider the induced chain $\{  \VV^+\wrtt D_{\T[\seq{t}]|\KK}  M_j\}$.
By (a), there exists an integer $a$ such that  for every $j>a$, we have $ \VV^+\wrtt D_{\T[\seq{t}]|\KK}  M_j= \VV^+\wrtt D_{\T[\seq{t}]|\KK}  M_a$. By the grading, we obtain that  $M_j=M_a$ as 
$V^0_{\overline{t}} D_{\T[\seq{t}]|\KK}$-modules. We conclude that $V^{\alpha}\wrtt M$ is a Noetherian left $V^{0}\wrtt {D_{\T[\seq{t}]|\KK}}$-submodule of $M$.

We now show that (a) implies (5). Let $\VV^\alpha\wrtt M = \sum_{i=1}^s \VV^+\wrtt D_{\T[\seq{t}]|\KK} m_i w^{d_i}$ as above. For $d>\max\{d_1,\ldots,d_s \}$, using the previous equation, we obtain that
\begin{align*}
V^{\alpha+d+1}\wrtt M & = \sum V^{d-d_i+1}\wrtt D_{\T[\seq{t}]|\KK} m_i \\
& = \sum \ideal{\seq{t}} \act V^{d-d_i}\wrtt D_{\T[\seq{t}]|\KK} m_i = \ideal{\seq{t}} \act V^{\alpha+d}\wrtt M.\qedhere
\end{align*}
\end{proof}

\subsection{$V$-filtrations for differentially extensible summands}

The main result of this section is the existence of $V$-filtrations for differentially extensible summands.

\begin{theorem}\label{ThmExistenceVfil}
Let $\T$ be a commutative Noetherian ring containing a field $\KK$ of characteristic zero. 
Let $\A$ be a $\KK$-subalgebra of $\T$  such that $\A$ is a direct summand of $\T$, and for which the inclusion $\A \subseteq \T$ is differentially extensible. 
Let $M$ be a $D_{\A[\seq{t}]|\KK}$-module that is a differentially extensible summand of a $D_{\T[\seq{t}]|\KK}$-module $N$.
If $V^\alpha\wrtt N$ is a $V$-filtration on $N$ along the ideal $\ideal{\seq{t}}=\ideal{\seq[\ell]{t}}$ over $\T[\seq{t}]$, then $W^\alpha\wrtt M \coloneqq V^\alpha\wrtt N \cap M$ is a $V$-filtration on $M$ along $\ideal{\seq{t}}$ over $\A[\seq{t}]$.
\end{theorem}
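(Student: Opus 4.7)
The plan is to verify each of the six axioms of \Cref{VfilM} for $W^\alpha\wrtt M \coloneqq V^\alpha\wrtt N \cap M$. First, some preliminaries: because $\A\subseteq\T$ is a differentially extensible direct summand inclusion, so is $\A[\seq{t}]\subseteq\T[\seq{t}]$, with splitting $\beta'\colon \T[\seq{t}]\to\A[\seq{t}]$ obtained from $\beta$ by $\A[\seq{t}]$-linearity, and with extensions obtained by extending the $D_{\A|\KK}$-coefficients of any operator in $D_{\A[\seq{t}]|\KK}$ written in the generators $\A[\seq{t}]$, $D_{\A|\KK}$, $\pd{t_1},\ldots,\pd{t_\ell}$. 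Crucially, this extension respects the $V$-filtration: if $\widetilde\delta \in V^i\wrtt D_{\T[\seq{t}]|\KK}$, then $\beta'\circ \widetilde\delta|_{\A[\seq{t}]} \in V^i\wrtt D_{\A[\seq{t}]|\KK}$, since $\beta'$ maps $\ideal{\seq{t}}^j\T[\seq{t}]$ into $\ideal{\seq{t}}^j\A[\seq{t}]$ for all $j$. The splitting $\Theta\colon N\to M$ is $\A[\seq{t}]$-linear by hypothesis.

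Conditions (2), (3), (4), and (6) follow by direct arguments. Condition (2) holds because every $m\in M$ lies in some $V^\alpha\wrtt N$. For (3), intersecting $V^\alpha = \bigcap_{\gamma<\alpha}V^\gamma$ with $M$ yields left-continuity, and the jump set of $W$ is contained in that of $V$, hence discrete. For (4), both $t_i$ and $\pd{t_i}$ belong to $D_{\A[\seq{t}]|\KK}\subseteq D_{\T[\seq{t}]|\KK}$ and act on $M$ identically to their action on $N$ restricted to $M$ by the differentially extensible summand property, so the required inclusions transfer. For (6), the natural map $W^\alpha/\bigcup_{\gamma>\alpha}W^\gamma \hookrightarrow V^\alpha/\bigcup_{\gamma>\alpha}V^\gamma$ is injective---as $W^\alpha\cap V^\gamma = W^\gamma$ for $\gamma\geq\alpha$---and commutes with the action of $\sum \pd{t_i} t_i - \alpha$, so nilpotency transfers.

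For condition (1), I would use a chain-stabilization argument. Given an ascending chain $\{W_k\}$ of $V^0\wrtt D_{\A[\seq{t}]|\KK}$-submodules of $W^\alpha\wrtt M$, set $N_k\coloneqq V^0\wrtt D_{\T[\seq{t}]|\KK}\cdot W_k \subseteq V^\alpha\wrtt N$, which stabilizes by condition (1) for $N$. The key claim is $W_k = N_k\cap M$: one containment is trivial, and for the reverse, given $v\in N_k\cap M$ written as $v = \sum_j \widetilde\delta_j\act w_j$ with $\widetilde\delta_j \in V^0\wrtt D_{\T[\seq{t}]|\KK}$ and $w_j \in W_k\subseteq M$, one applies $\Theta$ (which fixes $v$) and the differential splitting property to obtain $v = \sum_j (\beta'\circ\widetilde\delta_j|_{\A[\seq{t}]})\act w_j \in W_k$, using the preliminary compatibility to ensure the coefficient operators lie in $V^0\wrtt D_{\A[\seq{t}]|\KK}$.

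Condition (5), the equality $\sum_i t_i\act W^\alpha = W^{\alpha+1}$ for $\alpha\gg 0$, is the main obstacle: applying $\Theta$ directly to a representation $v = \sum t_i u_i$ in $N$ yields $v = \sum t_i \Theta(u_i)$ with $\Theta(u_i)\in M$, but $\Theta(u_i)$ need not lie in $V^\alpha\wrtt N$. The cleanest route is to derive (5) together with (1) from the Rees-algebra criterion of \Cref{LemmaEquivVfilt}, proving that $\VV^\alpha\wrtt M$ is Noetherian over $\VV^+\wrtt D_{\A[\seq{t}]|\KK}$ for $\alpha\in\QQ\cap[0,1)$. Under the corresponding Noetherianness hypothesis on $N$---satisfied in the main applications, e.g., when $N$ is regular holonomic with quasi-unipotent monodromy---one extends $\Theta$ to a $w$-linear map $\Theta_w\colon N[w,w^{-1}]\to M[w,w^{-1}]$, and applying the preliminary compatibility coefficient-wise in $w$, checks that $\Theta_w\bigl(\VV^+\wrtt D_{\T[\seq{t}]|\KK}\cdot p\bigr) \subseteq \VV^+\wrtt D_{\A[\seq{t}]|\KK}\cdot p$ for $p\in\VV^\alpha\wrtt M$. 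This yields the identity $P_k = \bigl(\VV^+\wrtt D_{\T[\seq{t}]|\KK}\cdot P_k\bigr)\cap \VV^\alpha\wrtt M$ for any ascending chain $\{P_k\}$ of $\VV^+\wrtt D_{\A[\seq{t}]|\KK}$-submodules of $\VV^\alpha\wrtt M$, from which the desired stabilization follows.
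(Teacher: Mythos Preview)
Your proposal is correct and follows essentially the same route as the paper: conditions (2), (3), (4), (6) are verified directly, and the Noetherian-type conditions (1) and (5) are obtained via the Rees-algebra criterion of \Cref{LemmaEquivVfilt} together with the key trick of applying the differential splitting $\Theta$ to pull a relation over $\VV^+\wrtt D_{\T[\seq{t}]|\KK}$ back to one over $\VV^+\wrtt D_{\A[\seq{t}]|\KK}$. Your separate direct argument for (1) is a harmless redundancy, and your explicit caveat that the argument uses Noetherianity of $\VV^\alpha\wrtt N$ over $\VV^+\wrtt D_{\T[\seq{t}]|\KK}$ is in fact more careful than the paper, which invokes this without comment (it is only established, via \Cref{LemmaVAlgebrasNoeth}, when $\T$ is a polynomial ring, which suffices for the applications).
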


\begin{proof}
We show that $W^\alpha\wrtt M$ is a $V$-filtration for $M$ by verifying Conditions~(2), (3), (4), and (6) of \Cref{VfilM}, and Condition~(a) of \Cref{LemmaEquivVfilt}.

The fact that Condition~(2) holds follows from the fact that $V^\alpha N$ is exhaustive.

To see that Condition~(3) holds, observe that since  $V^\alpha\wrtt N $ is a $V$-filtration for $N$, there exists some $m\in \NN$ for which $V^\alpha\wrtt N$ is constant on each interval of the form $(\frac{n}{m}, \frac{n+1}{m}]$, where $n\in \ZZ$.
Consequently, $W^\alpha\wrtt M =V^\alpha\wrtt N \cap M$ is constant on each such interval.

Toward verifying Condition~(4), we first show that $W^\alpha\wrtt M $ is a $V^{0}\wrtt {D_{\A[\seq{t}]|\KK}}$-submodule of $M$.
Let
\[
   \delta = \sum_{|a|\geq |b|} \delta_{a,b} t^a \pd{}^b \in V^{0}\wrtt {D_{\A[\seq{t}]|\KK}},
\]
where $\delta_{a,b} \in D_{\A|\KK}$, $t^a=t_1^{a_1} \cdots t_\ell^{a_\ell}$,  $\pd{}^b = \pd{t_1}^{b_1} \cdots \pd{t_\ell}^{b_\ell}$, and the sum is taken over finitely many $a,b\in \NN^\ell$. 
For each $\delta_{a,b}$, there exists $\widetilde{\delta}_{a,b}\in D_{\T|\KK}$ such that 
$\widetilde{\delta}_{a,b}|_\A=\delta_{a,b}$; set
\[\widetilde{\delta}=\sum_{|a|\geq |b|}\widetilde{\delta}_{a,b} t^a \pd{}^b.
\]
Given $w\in W^\alpha\wrtt M$, we have $\delta \act w = \widetilde{\delta}\act w$, as $M$  is a differentially extensible summand of $N$.
Since $\widetilde{\delta} \in V^{0}\wrtt {D_{\T[\seq{t}]|\KK}}$,
we obtain that 
$\widetilde{\delta} \act w\in V^\alpha\wrtt N$, hence $\delta \act w \in W^\alpha\wrtt M$.

To conclude that Condition~(4) holds,  it suffices to show that $t^a \pd{}^b \act W^\alpha\wrtt M \subseteq  W^{\alpha +|a|-|b|}\wrtt M$, but this follows from the string of containments
\begin{align*}
t^a \pd{}^b \act W^\alpha\wrtt M = t^a \pd{}^b \act ( V^\alpha\wrtt N\cap M) &\subseteq  
(t^a \pd{}^b \act  V^\alpha\wrtt N)\cap M \\
&\subseteq 
(V^{\alpha +|a|-|b|}\wrtt N) \cap M=W^{\alpha +|a|-|b|}\wrtt M.
\end{align*}

From the definition, it is clear that Condition~(6) holds, since 
\[ \frac{W^\alpha\wrtt M}{\bigcup_{\beta > \alpha} W^{\beta}\wrtt M} = \frac{V^\alpha\wrtt N \cap M}{\bigcup_{\beta > \alpha} V^{\beta}\wrtt N \cap M} \hookrightarrow \frac{V^\alpha\wrtt N}{\bigcup_{\beta > \alpha} V^{\beta}\wrtt N},\]
and
$\sum_{i=1}^r \pd{t_i} t_i - \alpha$ acts nilpotently on the module on the right-hand side. 

Finally, to verify that Condition~(a) holds, we show that the following stronger condition holds: $\mathfrak{W}^\alpha\wrtt M $ is a left-Noetherian  $\VV^+ D_{\A[\seq{t}]|\KK}$-module.
 It suffices to show that if $M_1, M_2 \subseteq \mathfrak{W}^\alpha\wrtt M $ are $\VV^{\alpha}\wrtt {D_{\A[\seq{t}]|\KK}}$-modules such that 
\[
(\VV^{+}\wrtt {D_{\T[\seq{t}]|\KK}})M_1=
(\VV^{+}\wrtt{D_{\T[\seq{t}]|\KK}} )M_2,
\]
then $M_1=M_2$,
because $\VV^\alpha\wrtt N$ is  a left-Noetherian  $\VV^{+}\wrtt {D_{\T[\seq{t}]|\KK}}$-module.
We denote by $\Theta\colon N\to M$ a differential splitting.
Let $w\in M_1$.
Since $\VV^{+}\wrtt {D_{\T[\seq{t}]|\KK}} M_1=\VV^{+}\wrtt {D_{\T[\seq{t}]|\KK}} M_2,$ there exists
$\xi_1,\ldots,\xi_s\in D_{\T|\KK}$ and $w_1,\ldots,w_s \in
M_2$ such that $w=\xi_1 \act w_1+\cdots +\xi_s \act w_s$. We apply
$\Theta$ to both sides to obtain
\begin{align*}
w'  &=\Theta(w')
=\Theta\left(\xi_1 \act w_1+\cdots +\xi_s \act w_s\right) \\
&=\Theta(\xi_1 \act w_1)+\cdots + \Theta(\xi_s \act w_s)
=\widetilde{\xi_1} \act w_1+\cdots +\widetilde{\xi_s} \act w_s,
\end{align*}
where the last equality follows from the fact that $\Theta$ is a
differential splitting compatible with $\beta.$ Thus, $w'\in \VV^{+}\wrtt{D_{\A[\seq{t}]|\KK}} M_2=M_2.$ Hence, $M_1\subseteq M_2.$ Similarly, one can show that $M_2\subseteq M_1$.
\qedhere
\end{proof}

\begin{corollary}\label{CorExistVfilEDS}
Suppose that $\KK$ has  characteristic zero. 
Let $\R$ be a polynomial ring over $\KK$, and let $\A$ be a $\KK$-subalgebra that is a direct summand of $\R$, and such that the inclusion $\A \subseteq \R$ is differentially extensible. 
Let $M$ be a $D_{\A[\seq{t}]|\KK}$-module that is a differentially extensible summand of a regular holonomic $D_{\R[\seq{t}]|\KK}$-module $N$ with quasi-unipotent monodromy.
Then there exists a $V$-filtration on $M$ along the ideal $\ideal{\seq{t}}=\ideal{\seq[\ell]{t}}$. 
\qed
\end{corollary}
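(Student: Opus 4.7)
The plan is to combine the classical Malgrange--Kashiwara existence theorem for $V$-filtrations on regular holonomic $D$-modules with quasi-unipotent monodromy over a smooth variety with \Cref{ThmExistenceVfil}, which transfers a $V$-filtration from the ambient module to a differentially extensible summand by intersection.

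First, I would invoke the classical existence result in the smooth case. Since $\R[\seq{t}]$ is a polynomial ring over $\KK$ and $N$ is a regular holonomic $D_{\R[\seq{t}]|\KK}$-module with quasi-unipotent monodromy, the work of Malgrange and Kashiwara (recalled at the beginning of \Cref{Vfilt}) produces a $V$-filtration $\{V^\alpha\wrtt N\}_{\alpha \in \QQ}$ on $N$ along the ideal $\ideal{\seq{t}}$ that satisfies all the axioms of \Cref{VfilM} in the polynomial ring setting.

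Next, I would apply \Cref{ThmExistenceVfil} with $\T = \R$. The required hypotheses hold by assumption: the commutative Noetherian $\KK$-algebra $\R$ contains $\KK$, the subalgebra $\A$ is a direct summand of $\R$, and the inclusion $\A \subseteq \R$ is differentially extensible; moreover, $M$ is a $D_{\A[\seq{t}]|\KK}$-module that is a differentially extensible summand of the $D_{\R[\seq{t}]|\KK}$-module $N$. Feeding the filtration $\{V^\alpha\wrtt N\}$ obtained in the previous step into \Cref{ThmExistenceVfil}, one concludes that
\[
W^\alpha\wrtt M \coloneqq V^\alpha\wrtt N \cap M
\]
is a $V$-filtration on $M$ along $\ideal{\seq{t}}$, which is exactly the claim.

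Since the argument is a direct assembly of \Cref{ThmExistenceVfil} with the classical smooth-case existence theorem, there is no further technical obstacle at this stage. The real substance lies in \Cref{ThmExistenceVfil} itself, whose delicate ingredient is the verification of the Noetherian condition \textup{(a)} of \Cref{LemmaEquivVfilt}: this is where differential extensibility and the differential splitting $\Theta$ are used to descend generators of $\VV^+\wrtt D_{\R[\seq{t}]|\KK}$-submodules of $\VV^\alpha\wrtt N$ to generators over $\VV^+\wrtt D_{\A[\seq{t}]|\KK}$, exploiting the Noetherianity of the Rees algebras established in \Cref{LemmaVAlgebrasNoeth}.
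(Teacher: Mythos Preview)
Your proposal is correct and matches the paper's approach exactly: the corollary is stated with a \qed and no explicit proof, since it follows immediately by combining the classical Malgrange--Kashiwara existence of a $V$-filtration on $N$ (recalled at the start of \Cref{Vfilt}) with \Cref{ThmExistenceVfil} applied with $\T=\R$. Your final paragraph is accurate commentary on where the work actually lies, but is not needed for the proof of this corollary itself.
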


Finally, we aim to define a $V$-filtration on a $D_{\A |\KK}$-module $M$ along $\seq{f} = \seq[\ell]{f} \in \A$, where $M$ is a differential direct summand of a $D_{\R |\KK}$-module $N$.
The direct image of $N$ under the graph embedding $i_{\seq{f}}$ is the local cohomology module $H^\ell_{\idtminusf} (N[\seq{t}])$, where $\idtminusf = \ideal{t_1-f_1,\ldots,t_\ell-f_\ell}$.
This $D_{\R[\seq{t}]|\KK}$-module has $H^\ell_{\idtminusf}(M[\seq{t}])$ as a differential direct summand by \Cref{PropLcExt}, and we use this fact in defining our desired $V$-filtration.
In \Cref{CorVfilGens}, we show that the $V$-filtration depends on the ideal generated by $\seq{f}$, but not on the particular choice of its generators.

\begin{definition}[$V$-filtration on a $D_{\A|\KK}$-module along $\seq{f}$] \label{DefVfilIdeal}
Suppose that $\KK$ has  characteristic zero. 
Let $\A$ be a commutative Noetherian $\KK$-algebra. 
Given indeterminates $\seq{t} = \seq[\ell]{t}$, and $\seq{f}=\seq[\ell]{f}\in \A$, consider the ideal 
 $\idtminusf$ of the polynomial ring $\A[\seq{t}]$ generated by $t_1-f_1,\ldots, t_\ell-f_\ell$. 
For a $D_{\A|\KK}$-module  $M$, let $M'$ denote the $D_{\A[\seq{t}]|\KK}$-module $H^{\ell}_{\idtminusf}(M[\seq{t}])$, and 
 identify $M$ with the isomorphic module $0 :_{M'} \idtminusf \subseteq M'$. Suppose that $M'$ admits a $V$-filtration along $\ideal{\seq{t}}$ over $\A[\seq{t}]$.
Then the \emph{$V$-filtration on $M$ along $\seq{f}$} is defined, for $\alpha \in \QQ$, as
 \[V^{\alpha}\wrtf M\coloneqq V^{\alpha}\wrtt M' \cap M = ( 0 :_{V^{\alpha}\wrtt M'} \idtminusf ).\]
\end{definition}

Using the proof of the previous theorem, we show that a $V$-filtration over $\A$ along $\seq{f}$ only depends on the ideal $I=\ideal{\seq[\ell]{f}}$ and not on the generators chosen.

\begin{corollary}\label{CorVfilGens}
Suppose that $\KK$ has  characteristic zero. 
Let $\R$ be a polynomial ring over $\KK$, and let $\A$ be a $\KK$-subalgebra that is a direct summand of $\R$, for which the inclusion $\A \subseteq \R$ is differentially extensible. 
Let $M$ be a $D_{\A[\seq{t}]|\KK}$-module that is a differentially extensible summand of a regular holonomic $D_{\R[\seq{t}]|\KK}$-module $N$ with quasi-unipotent monodromy.
Suppose that $\seq{f}=\seq[\ell]{f}$  and  $\seq{g}=\seq[m]{g}$  both generate an ideal $I\subseteq \A$.
Set $M'=H^{\ell}_{\idtminusf}(M[\seq{t}])$ 
  and $M''=H^{m}_{\ideal{\seq{u}-\seq{g}}}(M[\seq{u}])$, for another sequence of indeterminates $\seq{u}=\seq[m]{u}$.
We identify $M$ with both $0 :_{M'} \ideal{\seq{t}-\seq{f}}$ and  $0 :_{M''} \ideal{\seq{u}-\seq{g}}$.
Then $V^\alpha_{\ideal{\seq{t}}} M'\cap M=V^\alpha_{\ideal{\seq{u}}} M''\cap M$.
In particular, the $V$-filtration of $M$ along $\seq{f}$ depends only on the ideal $I$.
\end{corollary}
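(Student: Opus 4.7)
The plan is to reduce the independence statement to the already-known analogous fact over the polynomial ring $\R$, by threading the equality through the differentially extensible summand containment $M \subseteq N$.

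First I would introduce the two corresponding modules on the $\R$-side, namely $N' \coloneqq H^\ell_{\idtminusf}(N[\seq{t}])$ and $N'' \coloneqq H^m_{\ideal{\seq{u}-\seq{g}}}(N[\seq{u}])$. Since $N$ is regular holonomic with quasi-unipotent monodromy and the (closed) graph embedding preserves both properties, $N'$ and $N''$ are regular holonomic $D_{\R[\seq{t}]|\KK}$- and $D_{\R[\seq{u}]|\KK}$-modules, respectively, with quasi-unipotent monodromy. By Malgrange--Kashiwara, they therefore carry (unique) $V$-filtrations along $\ideal{\seq{t}}$ and $\ideal{\seq{u}}$.

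Next, by \Cref{PropLcExt}, $M'$ is a differentially extensible summand of $N'$, and $M''$ is a differentially extensible summand of $N''$. Applying \Cref{ThmExistenceVfil} and the uniqueness of the $V$-filtration, we obtain
\[
V^\alpha_{\ideal{\seq{t}}} M' = V^\alpha_{\ideal{\seq{t}}} N' \cap M', \qquad V^\alpha_{\ideal{\seq{u}}} M'' = V^\alpha_{\ideal{\seq{u}}} N'' \cap M''.
\]
Intersecting with $M$ and using the compatible identifications $M = 0:_{M'}\idtminusf \subseteq 0:_{N'}\idtminusf = N$ (and analogously on the $\seq{u}$-side), which are compatible because the graph embedding is functorial in the differential direct summand $M \subseteq N$, gives
\[
V^\alpha_{\ideal{\seq{t}}} M' \cap M = V^\alpha_{\ideal{\seq{t}}} N' \cap M, \qquad V^\alpha_{\ideal{\seq{u}}} M'' \cap M = V^\alpha_{\ideal{\seq{u}}} N'' \cap M.
\]

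Finally, since $\seq{f}$ and $\seq{g}$ generate the same ideal $I\R$ of $\R$, the already-established independence of the $V$-filtration from the choice of generators in the polynomial ring setting \cite[Proposition~2.6]{BMS2006a} yields
\[
V^\alpha_{\ideal{\seq{t}}} N' \cap N = V^\alpha_{\ideal{\seq{u}}} N'' \cap N,
\]
and intersecting both sides with $M \subseteq N$ gives $V^\alpha_{\ideal{\seq{t}}} N' \cap M = V^\alpha_{\ideal{\seq{u}}} N'' \cap M$. Chaining the two displays above yields the conclusion.

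The only genuine subtlety, and hence the main obstacle, is to confirm that the various embeddings are compatible: specifically that the inclusion $M \hookrightarrow M' \hookrightarrow N'$ induced by the graph embedding of $M[\seq{t}] \subseteq N[\seq{t}]$ agrees with the inclusion $M \hookrightarrow N \hookrightarrow N'$ under the identifications of $M$ and $N$ as the $\ideal{\seq{t}-\seq{f}}$-torsion submodules of $M'$ and $N'$; once this is noted, the result follows by a purely formal chase of the intersections.
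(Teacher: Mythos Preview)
Your proposal is correct and follows essentially the same route as the paper: introduce $N'$ and $N''$, use \Cref{PropLcExt} and \Cref{ThmExistenceVfil} to identify $V^\alpha_{\ideal{\seq{t}}} M' = V^\alpha_{\ideal{\seq{t}}} N' \cap M'$ (and similarly for $M''$), invoke \cite[Proposition~2.6]{BMS2006a} for the generator-independence on the $\R$-side, and finish with the intersection chase. The paper performs exactly this argument, writing out the chain of equalities explicitly rather than separating it into the two displays you give; your remark about the compatibility of the embeddings is the only point the paper leaves implicit.
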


\begin{proof}
Let  $N'=H^{\ell}_{\ideal{\seq{t}-\seq{f}}}(N[\seq{t}])$   and $N''=H^{m}_{\ideal{\seq{u}-\seq{g}}}(N[\seq{u}])$. 
We observe that $M'$ is a differentially extensible summand of $N'$, and  $M''$ is a differentially extensible summand of $N''$ by \Cref{PropLcExt}.
Since both $N'$ and $N''$ have $V$-filtrations (along $\seq{t}$ and $\seq{u}$), so do $M'$ and $M''$ by \Cref{ThmExistenceVfil}.
We have that  $N$ is identified with  $0 :_{N'} \ideal{\seq{t}-\seq{f}}$ and  $0 :_{N''} \ideal{\seq{u}-\seq{g}}$.
Then $V^\alpha_{\ideal{\seq{t}}} N'\cap N=V^\alpha_{\ideal{\seq{u}}} N''\cap N$  \cite[Proposition~2.6]{BMS2006a}, and thus we have the equalities
\begin{align*}
V^\alpha_{\ideal{\seq{t}}} M'\cap M &=(V^\alpha_{\ideal{\seq{t}}} N' \cap M') \cap M =V^\alpha_{\ideal{\seq{t}}} N' \cap M' \cap N \cap  M  \\
&=(V^\alpha_{\ideal{\seq{t}}} N'  \cap N) \cap M' \cap M =(V^\alpha_{\ideal{\seq{u}}} N''  \cap N) \cap M'' \cap M \\ 
&=(V^\alpha_{\ideal{\seq{u}}} N'' \cap M'') \cap M  =V^\alpha_{\ideal{\seq{u}}} M''\cap M.\qedhere
\end{align*}
\end{proof}

\subsection{Hodge  ideals} \label{Hodge}

Given  $f\in R=\CC[x_1,\ldots,x_d]$, the $D_{R|\CC}$-module
of regular functions on $X=\Spec(R)$ with poles along the divisor $Y={\rm div}(f)$ is given by 
$\cO_X(*Y)=R_f$. This module  is equipped with the Hodge filtration $\cF_\bullet (\cO_X(*Y))$ \cite{Saito90,MP19}, which is an invariant measuring the singularities of $Y$.  Musta\c{t}\u{a} and Popa \cite{MP19} defined a sequence of ideals, the \emph{Hodge ideals} of $f$, that encode data of the Hodge filtration.  Later on, they extended this notion to $\QQ$-divisors  \cite{MPBirational,MPVfil}. In this framework, they considered a filtration $\cF_\bullet \cO_X (*Y) \cdot f^{1-\lambda}$ on the free $D_{R|\CC}$-module of rank one $ \cO_X (*Y) \cdot f^{1-\lambda}= R_f  \cdot f^{1-\lambda}$, where $\lambda \in \QQ_{\geq 0}$. If we denote the support of $Y$ as $Z=Y_{\rm red}$,  then  the sequence of \emph{Hodge ideals} of $f^\lambda$, which we denote as $I_k(f^\lambda)$, $k\geq 0$, 
is defined by the equations:
$$
\cF_k \cO_X (*Y) \cdot f^{1-\lambda} =I_k(\lambda Y) \otimes \cO_X (k Z + Y) \cdot f^{1-\lambda}.
$$
We point out that Hodge ideals can be also defined for ideals of $R$, and $\lambda \in (0,1] \cap \QQ$  \cite{MPIdeals}. This description boils down to the computation of Hodge ideals of $\QQ$-divisors.

An interesting feature is that Hodge ideals of $\QQ$-divisors are characterized completely in terms of $V$-filtrations. For simplicity we will just consider the case where the divisor is reduced. Set $Q_j(x):= x(x+1) \cdots (x+j-1) \in \ZZ[x]$. 


\begin{theorem}[{\cite{MPVfil}}]\label{ThmDefHodgeVfil}
Let $Y$ be a reduced divisor defined by $f\in R$. Then for all $\lambda \in \QQ_{\geq 0}$ and $k\geq 0$, we have
{ $$
I_k(f^\lambda)=\left\{  g=\sum^k_{j=0} Q_j(\lambda) f^{k-j} h_j \in R \ \Bigg|\ {\large \substack{ \exists h_0,\dots ,h_k\in R \ \mathrm{ s.t. } \\ \sum^k_{j=0} h_j \partial_t^j \frac{1}{f-t}\in V^\lambda_{\ideal{t}} H^1_{\ideal{f-t}}(R[t])}}  \right\}.
$$ }
\end{theorem}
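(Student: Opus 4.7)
The plan is to prove the theorem using the machinery of the graph embedding $i_f\colon X\to X\times \mathbb{A}^1_t$, which translates the definition of $I_k(f^\lambda)$ via the Hodge filtration on $\mathcal{O}_X(*Y)\cdot f^{1-\lambda}$ into a condition on the $V$-filtration of the direct image. Recall that under $i_{f+}$, the module $\mathcal{O}_X(*Y)\cdot f^{1-\lambda}$ maps to a $D_{R[t]|\CC}$-module whose underlying $R[t]$-module is (up to the twist by $f^{1-\lambda}$) identified with $H^1_{\ideal{f-t}}(R[t])$. The first task is to make this identification explicit: a section of the form $g\cdot f^{-k}\cdot f^{1-\lambda}$ on $X$ lifts, via repeated applications of $\partial_t$ to $1/(f-t)$ and the relation $(f-t)\cdot\delta_{f-t}=0$ in local cohomology, to an element of the shape $\sum_{j=0}^k h_j\partial_t^j\frac{1}{f-t}$ in $H^1_{\ideal{f-t}}(R[t])$.

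Next I would carry out the combinatorial computation that links the $h_j$ to the prescribed expression $g=\sum_{j=0}^k Q_j(\lambda)f^{k-j}h_j$. The polynomial $Q_j(\lambda)=\lambda(\lambda+1)\cdots(\lambda+j-1)$ arises from iterating the identity $\partial_t\cdot(f-t)^{-a}\cdot f^{1-\lambda}=a(f-t)^{-a-1}f^{1-\lambda}$ and reading off the top-order contribution of each term after the Kashiwara-type isomorphism identifying sections along the graph. The factor $f^{k-j}$ keeps track of pole orders, so that applying the natural evaluation $t\mapsto f$ (i.e.\ quotienting by $\ideal{t-f}\subseteq R[t]$) collapses the sum $\sum_j h_j\partial_t^j\frac{1}{f-t}$ to precisely $\sum_{j=0}^k Q_j(\lambda)f^{k-j}h_j\cdot f^{-(k+1)}\cdot f^{1-\lambda}$, which (stripped of the universal pole factor) is the element $g\in R$ we are testing for membership in $I_k(f^\lambda)$.

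The third step is to invoke Saito's theorem describing the Hodge filtration on the direct image of a Hodge module under a smooth-codimension-one embedding in terms of the $V$-filtration: an element lies in $\mathcal{F}_k\,i_{f+}(\mathcal{O}_X(*Y)\cdot f^{1-\lambda})$ if and only if it admits a representative in $\sum_{j=0}^k \partial_t^j\,V^\lambda_{\ideal{t}}$ of the local cohomology module. Combining this characterization with the explicit parametrization from Steps 1 and 2 yields exactly the claimed equivalence: $g\in I_k(f^\lambda)$ iff one can choose $h_0,\ldots,h_k$ realizing $g=\sum_{j=0}^k Q_j(\lambda)f^{k-j}h_j$ and such that $\sum_{j=0}^k h_j\partial_t^j\frac{1}{f-t}$ belongs to $V^\lambda_{\ideal{t}}H^1_{\ideal{f-t}}(R[t])$.

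The hard part will be Step 3, namely establishing (or quoting at just the right level of generality) Saito's identification of the Hodge filtration on a direct image in terms of the $V$-filtration along the normal coordinate, especially in the twisted setting with the rational multiplier $f^{1-\lambda}$. The subtlety is that the twist $f^{1-\lambda}$ means the generator of the relevant submodule sits in $V^\lambda$ rather than $V^1$, so one must carefully track how the Hodge filtration transforms under this shift; this also explains why the polynomials $Q_j(\lambda)$ (rather than $Q_j(1)=j!$) appear. Once the translation from $\mathcal{F}_k$ to $V^\lambda$ is in hand, the matching of representatives with the explicit sum $\sum_j h_j\partial_t^j(f-t)^{-1}$ is a direct consequence of the presentation of $H^1_{\ideal{f-t}}(R[t])$ as $R[t]\langle\partial_t\rangle\cdot (f-t)^{-1}$ modulo the $R[t]$-submodule $R[t]\cdot(f-t)^{-1}$ shifted appropriately.
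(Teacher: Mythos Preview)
The paper does not prove this theorem; it is stated with the citation \cite{MPVfil} (Musta\c{t}\u{a}--Popa) and used as a black box to \emph{define} Hodge ideals in the singular setting. So there is no proof in the paper to compare against.

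Your sketch is a reasonable outline of the argument in the cited reference: the graph embedding identifies $i_{f+}(\mathcal{O}_X(*Y)f^{1-\lambda})$ with $H^1_{\ideal{f-t}}(R[t])$ (twisted), and the key input is Saito's compatibility between the Hodge filtration and the $V$-filtration under a closed embedding, which produces the description $F_k = \sum_{j\le k}\partial_t^j V^\lambda$. The combinatorics with $Q_j(\lambda)$ then falls out from computing how $\partial_t^j$ acts on the generator. One correction: your Step~3 slightly misstates the shape of the result you need---it is not that an element lies in $F_k$ iff it has a representative in $\sum_j \partial_t^j V^\lambda$, but rather that $F_k$ of the direct image \emph{equals} $\sum_{j\le k}\partial_t^j(V^\lambda\cap F_0)$ in the appropriate sense, and one must also check that the $h_j$ can be taken in $R$ (not just $R[t]$); this uses that $V^\lambda$ is generated over $V^0 D$ by elements of the form $h\cdot\frac{1}{f-t}$ with $h\in R$. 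But these are refinements, not gaps, and in any case the paper under review simply imports the statement.
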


We refer to the work of Musta\c{t}\u{a} and Popa \cite{MPProperties,MP19,MPBirational,MPVfil}  for properties of these ideals.

Theorem~\ref{ThmDefHodgeVfil} allows us to define Hodge ideals whenever $V$-filtrations are defined.

\begin{definition}
Suppose that $\KK$ has  characteristic zero. 
Let $T$ be a $\KK$-algebra  that admits   a $V$-filtration along a reduced $f\in T$.
For any $\lambda \in \QQ_{\geq 0}$, and every  $k\geq 0$,  the  $k$-th Hodge ideal of $f^\lambda$ is defined by
$$
I^T_k(f^\lambda)=\left\{  g=\sum^k_{j=0} Q_j(\lambda) f^{k-j} h_j \in T \ \Bigg|\ {\large \substack{\exists h_0,\dots,h_k\in T\ \mathrm{ s.t.} \\ \sum^k_{j=0} h_j \partial^j_t \frac{1}{f-t}\in V^\lambda_{\ideal{t}} H^1_{\ideal{f-t}}(T[t])}}  \right\}.
$$ 
If the ring $T$ is clear from the context, we write $I_k(f^\lambda)$ instead of $I^T_k(f^\lambda)$.
\end{definition}

Observe that the conditions in the definition of the $V$-filtration imply that for fixed $f$ and $k$, the collection of ideals $\{I^T_k(f^\lambda)\}$ is decreasing in $\lambda$, and that the set of $\lambda$ for which $I^T_{k}(f^\lambda)\neq I^T_{k}(f^{\lambda'})$ for all $\lambda'>\lambda$ is a discrete set of rational numbers.
From our results on $V$-filtrations, we can show that Hodge ideals exist for 
extensible direct summands of polynomial rings.

\begin{corollary}\label{CorHodge}
Suppose that $\KK$ has  characteristic zero. 
Let $\R$ be a polynomial ring over $\KK$, and let $\A$ be a $\KK$-subalgebra that is a direct summand of $\R$, for which the inclusion $\A \subseteq \R$ is differentially extensible. 
Then $I_k^A(f^\lambda)$ exists for all $f\in A$ and  $\lambda\in \QQ_{\geq 0}$.
Furthermore,
\begin{equation}\label{EqRestrictionHI}
I^A_k(f^\lambda)=I^R_k(f^\lambda)\, \cap \, A. 
\end{equation}
\end{corollary}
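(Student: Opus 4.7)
The plan is first to deduce existence of $I^A_k(f^\lambda)$ from the $V$-filtration machinery, then to prove the restriction formula $I^A_k(f^\lambda)=I^R_k(f^\lambda)\cap A$ by transporting witnesses through the splitting $\beta\colon R\to A$.

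\textbf{Existence and reformulation.} I would set $N := H^1_{\ideal{f-t}}(R[t])$ and $M := H^1_{\ideal{f-t}}(A[t])$; the former is a regular holonomic $D_{R[t]|\KK}$-module with quasi-unipotent monodromy, and by \Cref{PropLcExt}, $M$ is a differentially extensible summand of $N$. \Cref{CorExistVfilEDS} then yields a $V$-filtration on $M$ along $\ideal{t}$, so that $I_k^A(f^\lambda)$ is well-defined. Furthermore, \Cref{ThmExistenceVfil} gives $V^\lambda_{\ideal{t}}M=V^\lambda_{\ideal{t}}N\cap M$, and since $\sum_j h_j\partial_t^j\tfrac{1}{f-t}$ lies in $M$ exactly when each $h_j\in A$, both $I^A_k(f^\lambda)$ and $I^R_k(f^\lambda)\cap A$ consist precisely of those $g\in A$ admitting witnesses $(h_j)$---in $A$ or in $R$ respectively---that satisfy $g=\sum_j Q_j(\lambda)f^{k-j}h_j$ together with $\sum_j h_j\partial_t^j\tfrac{1}{f-t}\in V^\lambda_{\ideal{t}}N$. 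The inclusion $I^A_k(f^\lambda)\subseteq I^R_k(f^\lambda)\cap A$ is therefore immediate.

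\textbf{The reverse inclusion.} Given $g\in A$ with $R$-witnesses $(h_j)$, I would set $h'_j:=\beta(h_j)\in A$ and use $A$-linearity of $\beta$ together with $g,f\in A$ to recover $g=\sum_j Q_j(\lambda)f^{k-j}h'_j$. Writing $\Theta\colon N\to M$ for the splitting induced coefficient-wise by the $A[t]$-linear extension $\bar\beta\colon R[t]\to A[t]$ of $\beta$, the new filtration witness becomes $\Theta(u)$, where $u:=\sum_j h_j\partial_t^j\tfrac{1}{f-t}\in V^\lambda_{\ideal{t}}N$. Because $\Theta(u)\in M$ and $V^\lambda_{\ideal{t}}M=V^\lambda_{\ideal{t}}N\cap M$, the problem reduces to establishing that the splitting preserves the filtration, i.e., $\Theta\bigl(V^\lambda_{\ideal{t}}N\bigr)\subseteq V^\lambda_{\ideal{t}}N$.

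\textbf{The hard part: preservation of the $V$-filtration.} This is the main obstacle: proving $\Theta(V^\alpha_{\ideal{t}}N)\subseteq V^\alpha_{\ideal{t}}N$ for every $\alpha$. The decisive property is that $\Theta$ commutes with $t$ and with $\partial_t$---hence with $\partial_t t$---because $\bar\beta$ acts coefficient-wise on $t$. I would then proceed by downward induction on $\alpha$: for $v\in V^\alpha_{\ideal{t}}N$, axiom~(6) of \Cref{VfilM} supplies $K\ge 1$ with $(\partial_t t-\alpha)^K v\in V^{>\alpha}_{\ideal{t}}N$, so commutativity yields $(\partial_t t-\alpha)^K\Theta(v)=\Theta\bigl((\partial_t t-\alpha)^K v\bigr)\in\Theta(V^{>\alpha}_{\ideal{t}}N)\subseteq V^{>\alpha}_{\ideal{t}}N$ by the inductive hypothesis. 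If $\Theta(v)\in V^\beta_{\ideal{t}}N\setminus V^{>\beta}_{\ideal{t}}N$ for some $\beta<\alpha$, then on the generalized eigenspace $V^\beta/V^{>\beta}$ the operator $(\partial_t t-\alpha)^K$ would be invertible (its scalar part $-(\alpha-\beta)^K$ is nonzero while $\partial_t t-\beta$ is nilpotent there), contradicting $(\partial_t t-\alpha)^K\Theta(v)\in V^{>\alpha}_{\ideal{t}}N\subseteq V^{>\beta}_{\ideal{t}}N$. Hence $\Theta(v)\in V^\alpha_{\ideal{t}}N$. The delicate base case is supplied by axiom~(5), which makes $V^{\alpha+1}_{\ideal{t}}N=\ideal{t}\,V^\alpha_{\ideal{t}}N$ for $\alpha\gg 0$; combined with $\Theta(\ideal{t}X)=\ideal{t}\Theta(X)$ from $A[t]$-linearity, this reduces preservation in the $\ideal{t}$-adic tail to a single threshold, which one verifies using Noetherianity of the $V$-filtration. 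Once this preservation is in hand, the reverse inclusion---and the equality $I^R_k(f^\lambda)\cap A\subseteq I^A_k(f^\lambda)$---follows at once.
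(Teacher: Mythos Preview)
Your overall architecture is exactly the paper's: existence from \Cref{CorExistVfilEDS}, the forward inclusion from $V^\lambda_{\ideal{t}}M\subseteq V^\lambda_{\ideal{t}}N$, and the reverse inclusion by pushing $R$-witnesses $(h_j)$ through the splitting to get $A$-witnesses $(\beta(h_j))$. The paper's own computation is precisely $\theta\bigl(\sum_j h_j\partial_t^j\tfrac{1}{f-t}\bigr)=\sum_j \beta(h_j)\partial_t^j\tfrac{1}{f-t}$, obtained from the differential-splitting identity $\Theta(\delta\act v)=(\beta\circ\delta|_A)\act v$ with $v=\tfrac{1}{f-t}\in M$.

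Where you go beyond the paper is in isolating the step $\Theta(V^\lambda_{\ideal{t}}N)\subseteq V^\lambda_{\ideal{t}}N$. The paper simply asserts the conclusion after the displayed computation, without saying why the resulting element lies in $V^\lambda_{\ideal{t}}M$; you are right that this needs $\Theta$ to preserve the $V$-filtration on $N$, since $V^\lambda_{\ideal{t}}M=V^\lambda_{\ideal{t}}N\cap M$. Your commutation of $\Theta$ with $t$ and $\partial_t$ is correct (it follows from $\bar\beta=\beta\otimes_{\KK}\KK[t]$), and your nilpotence/eigenvalue argument on $\mathrm{gr}^V_\beta$ is the standard one.

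The one place to tighten is your ``base case.'' As written, you reduce preservation for $\alpha\gg 0$ to preservation at a single threshold $\alpha_0$ and then say ``one verifies using Noetherianity,'' but that threshold claim is itself what you are trying to prove. The clean fix is to first establish a \emph{comparability} estimate and then run the eigenvalue argument once, with no induction needed. Namely: the filtration is bounded below, say $V^{\gamma_0}_{\ideal{t}}N=N$; axiom~(5) gives $V^{\alpha_0+n}_{\ideal{t}}N=t^nV^{\alpha_0}_{\ideal{t}}N$ for $n\geq 0$; hence
\[
\Theta\bigl(V^{\alpha_0+n}_{\ideal{t}}N\bigr)=t^n\Theta\bigl(V^{\alpha_0}_{\ideal{t}}N\bigr)\subseteq t^nN\subseteq V^{\gamma_0+n}_{\ideal{t}}N,
\]
so with $j_0=\alpha_0-\gamma_0$ one has $\Theta(V^{\alpha+j_0}_{\ideal{t}}N)\subseteq V^{\alpha}_{\ideal{t}}N$ for all $\alpha$. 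Now given $v\in V^\alpha_{\ideal{t}}N$ with $\Theta(v)\in V^\beta_{\ideal{t}}N\setminus V^{>\beta}_{\ideal{t}}N$ and $\beta<\alpha$, iterate axiom~(6) to produce a polynomial $P$ with all roots $\geq\alpha$ and $P(\partial_t t)v\in V^{\alpha'}_{\ideal{t}}N$ for some $\alpha'>\beta+j_0$; then $P(\partial_t t)\Theta(v)=\Theta(P(\partial_t t)v)\in V^{>\beta}_{\ideal{t}}N$, while $P(\partial_t t)$ is invertible on $\mathrm{gr}^V_\beta$, giving the contradiction. With this adjustment your proof is complete and in fact fills a step the paper leaves implicit.
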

\begin{proof}
The existence of Hodge ideals follows from Corollary \ref{CorExistVfilEDS}.
We prove \eqref{EqRestrictionHI} by showing both containments. First take $g\in I^A_k(f^\lambda)$.  There exists $ h_0,\dots,h_k\in A\subseteq R$ such that  $g=\sum^k_{j=0} Q_j(\lambda) f^{k-j} h_j$ and
$$\sum^k_{j=0} h_j \partial^j_t \frac{1}{f-t}\in V^\lambda_{\ideal{t}} H^1_{\ideal{f-t}}(A[t])\subseteq V^\lambda_{\ideal{t}} H^1_{\ideal{f-t}}(R[t])$$
so that $ g\in I^R_k(f^\lambda)$.

On the other hand, for $g\in I^R_k(f^\lambda)\cap A$,
there exist $ h_0,\dots,h_k\in R$ such that $g=\sum^k_{j=0} Q_j(\lambda) f^{k-j} h_j$ and
$$\sum^k_{j=0} h_j \partial^j_t \frac{1}{f-t}\in V^\lambda_{\ideal{t}} H^1_{\ideal{f-t}}(A[t])\subseteq V^\lambda_{\ideal{t}} H^1_{\ideal{f-t}}(R[t]).$$

Let $\beta:R\to A$ a splitting, and $\theta:H^1_{\ideal{f-t}}(R[t])\to H^1_{\ideal{f-t}}(A[t])$ a differentiable splitting.
Then
\begin{align*}
\theta\left(\sum^k_{j=0} h_j \partial_t^j \frac{1}{f-t}\right)&=
\sum^k_{j=0} \theta\left(h_j \partial_t^j \frac{1}{f-t}\right)\\
&=\sum^k_{j=0}( \beta\circ h_j \partial_t^j) \frac{1}{f-t}\\
&=\sum^k_{j=0}( \beta h_j) \partial_t^j \frac{1}{f-t}.
\end{align*}
Since  $\beta( h_0),\dots,\beta(h_k)\in A$, we conclude that $g\in I^A_k(f^\lambda)$.
\end{proof}

We now recall a definition related to the log canonical threshold.

\begin{definition}
Suppose that $\KK$ has  characteristic zero. 
Let $T$ be a $\KK$-algebra such that $b^T_f(s)$ exists for every $f\in T$.
The \emph{minimal exponent} of $f$, $\widetilde{\alpha}_f$, is the negative of the largest root of $b_f(s)/(s+1)$.
\end{definition}

As a consequence of \Cref{CorHodge}, we can relate Hodge ideals and minimal exponent. 

\begin{theorem}\label{CorMinExp}
Suppose that $\KK$ has  characteristic zero. 
Let $\R$ be a polynomial ring over $\KK$, and let $\A$ be a $\KK$-subalgebra that is a direct summand of $\R$, for which the inclusion $\A \subseteq \R$ is differentially extensible. 
Given $\lambda\in(0,1]\cap \QQ$,  $I^A_k(f^\lambda)=0$ if and only if $k\leq \widetilde{\alpha}_f-\lambda$.
\end{theorem}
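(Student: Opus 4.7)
The plan is to reduce the statement to the analogous characterization in the polynomial ring $\R$, established by Musta\c{t}\u{a} and Popa in \cite{MPVfil}, and to transport it to $\A$ via two structural results proven earlier in the paper. First, since the inclusion $\A \subseteq \R$ is differentially extensible, \Cref{ThmBSEqual} yields the equality of Bernstein--Sato polynomials $b^\A_f(s) = b^\R_f(s)$; in particular, the minimal exponent $\widetilde{\alpha}_f$ is intrinsic to $f$ and takes the same value whether computed in $\A$ or in $\R$. Second, \Cref{CorHodge} provides the restriction identity
\[
I^\A_k(f^\lambda) = I^\R_k(f^\lambda) \cap \A.
\]

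Granting these two inputs, the forward implication $k \le \widetilde{\alpha}_f - \lambda \Rightarrow I^\A_k(f^\lambda)=0$ is straightforward: the characterization of \cite{MPVfil} gives the corresponding vanishing for $I^\R_k(f^\lambda)$ in $\R$, and the restriction formula immediately transports this to $\A$. For the converse, assuming $k > \widetilde{\alpha}_f - \lambda$, the polynomial ring result guarantees that $I^\R_k(f^\lambda)$ is nontrivial, and one must certify that the intersection with $\A$ remains nontrivial as well.

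The main step requiring attention is thus the converse---descending the nontriviality of $I^\R_k(f^\lambda)$ to $\A$. To handle this, I would apply the $\A$-linear splitting $\beta\colon \R \to \A$ to a representative $g = \sum_{j=0}^{k} Q_j(\lambda) f^{k-j} h_j \in I^\R_k(f^\lambda)$. Since $\beta$ is $\A$-linear and $f \in \A$, one has $\beta(g) = \sum_{j=0}^{k} Q_j(\lambda) f^{k-j} \beta(h_j) \in \A$, and an argument parallel to the proof of \Cref{CorHodge}---invoking the differential splitting $\theta\colon H^1_{\ideal{f-t}}(\R[t]) \to H^1_{\ideal{f-t}}(\A[t])$ together with \Cref{ThmExistenceVfil}, which identifies the $V$-filtration on the $\A[t]$-module as the intersection with that on the $\R[t]$-module---verifies that $\beta(g) \in I^\A_k(f^\lambda)$. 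One then chooses $g$ among the witnesses supplied by \cite{MPVfil} that already lie in $\A$---for instance, a suitable power of $f$, or the canonical element witnessing nontriviality---so that $\beta(g)\neq 0$, completing the proof. The only real subtlety in this plan is the compatibility of the differential splitting with the $V$-filtration, but this is precisely what \Cref{ThmExistenceVfil} is designed to deliver.
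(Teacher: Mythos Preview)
Your overall plan---invoke \Cref{ThmBSEqual} to identify the minimal exponents in $\A$ and $\R$, invoke \Cref{CorHodge} for the restriction formula $I^\A_k(f^\lambda)=I^\R_k(f^\lambda)\cap \A$, and then appeal to the Musta\c{t}\u{a}--Popa result for polynomial rings---is exactly the paper's argument. The paper's proof is a single sentence citing these three ingredients.

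Where you diverge is in the converse, and this is because you have taken the stated condition ``$I^\A_k(f^\lambda)=0$'' literally. The statement being proved (and the polynomial-ring result of Musta\c{t}\u{a}--Popa being invoked, namely \cite[Corollary~C]{MPVfil}) is that the Hodge ideal is \emph{trivial}, i.e.\ equal to the whole ring, not that it is the zero ideal; the ``$=0$'' is a typo for ``$=\A$'' (respectively ``$=\R$''). With the intended reading, both directions are immediate from the restriction formula, because triviality of an ideal is witnessed by the element~$1$, and $1\in \A$: one has $I^\A_k(f^\lambda)=\A \Longleftrightarrow 1\in I^\R_k(f^\lambda)\cap \A \Longleftrightarrow I^\R_k(f^\lambda)=\R$. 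No splitting argument is needed.

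Your proposed fix for the converse---produce a nonzero element of $I^\R_k(f^\lambda)$ already lying in $\A$ by choosing ``a suitable power of $f$, or the canonical element witnessing nontriviality''---is in any case incomplete: you do not verify that such a power of $f$ actually belongs to $I^\R_k(f^\lambda)$, and ``the canonical element'' is never identified. Fortunately, once the statement is read correctly this entire discussion is unnecessary.
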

\begin{proof}
Since this equivalence is known for polynomial rings \cite[Theorem C]{MPVfil},
the result  follows from Theorems~\ref{ThmBSEqual} and Corollary \ref{CorHodge}.
\end{proof}


\section{Multiplier  ideals} \label{Multiplier}

Given a field $\KK$ of characteristic zero, let $\R$ be a polynomial ring over $\KK$ and $X=\Spec(\R)$. To any ideal $I\subseteq \R$, we associate a family of \emph{multiplier ideals} $\cJ_\R(I^\lambda)$ parameterized by nonnegative real numbers $\lambda$.
There exists a discrete sequence of rational numbers
\[
 0 = \lambda_0 < \lambda_1 < \lambda_2 < \cdots 
\]
called the \emph{jumping numbers} of $I$, 
for which $\cJ_\R(I^\lambda)$ is constant for $\lambda_i \leq \lambda < \lambda_{i+1}$, and $\cJ_\R(I^{\lambda_i})\varsupsetneq\cJ_\R(I^{\lambda_{i+1}})$. 
The first nonzero jumping number, $\lambda_1$, is known as the \emph{log canonical threshold of $I$}, denoted $\lct_\R(I)$. 

We begin this section by recalling the definition of the multiplier ideals in this setting, 
and we refer to \cite{Laz2004} for details, and any unexplained terminology.	
	
\begin{definition}[Multiplier ideals over a polynomial ring]\label{multiplier}
Let $\R$ be a polynomial ring over a field $\KK$ of characteristic zero, let $I$ be an ideal of $\R$, and let $X = \Spec(\R)$.
Let $\pi\colon Y\to X$ be a log resolution of $I$ such that $I\cO_Y=\cO_Y(-F_\pi)$, where 
$F_\pi=\sum r_i E_i$, and $K_{Y/X}=\sum b_i E_i$ is the {relative canonical divisor}.
The \emph{multiplier ideal} of $I$ with exponent $\lambda\in \RR_{\geq 0}$ is defined by 
\[
\cJ_\R(I^\lambda)=\pi_* \cO_Y(\lceil K_{Y/X}- \lambda F_\pi\rceil)=\{h\in \R : \ord_{E_i} (\pi^* h) \geq \lfloor \lambda r_i  - b_i\rfloor\; \forall i  \}.
\]
\end{definition}
	
In our setting, there is a characterization of multiplier ideals in terms of the $V$-filtration that, roughly speaking, says that the chain of multiplier ideals is
essentially the $V$-filtration of $\R$ along the ideal $I$  \cite{BudurSaito05, BMS2006a}. 
In fact, this relation has been used to develop algorithms to compute multiplier ideals, based on existent algorithms for computing the Bernstein--Sato polynomial that use Gr\"obner bases over the ring of differential operators \cite{BL-Alg,Shibuta}. 		
	
\begin{theorem}[{\cite[Theorem~1, Corollary~2]{BMS2006a}}]\label{ThmMultIdealVersions}
Let $\R$ be a polynomial ring over a field $\KK$ of characteristic zero, and let $I$ be an ideal of $\R$. 	
	Given a real number $\lambda\geq 0$, the following ideals coincide\textup: 
\begin{enumerate}
\item $\cJ_\R(I^\lambda)$,
\item $\bigcup_{\alpha>\lambda}V^\alpha \R$, where the $V$-filtration is taken along  $I$, and 
\item  $\{ g\in \R : \gamma>\lambda \hbox{  if } b_{I,g}^\R(-\gamma)=0\}$.
\end{enumerate}
\end{theorem}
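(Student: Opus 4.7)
The plan is to prove the three equalities by first establishing (2) $=$ (3) via the intrinsic relationship between $V$-filtrations and Bernstein--Sato polynomials, and then handling (1) $=$ (2) by reducing to the case of a coordinate ideal through the graph embedding.

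First I would address (2) $=$ (3) using the Kashiwara--Malgrange-type characterization of the $V$-filtration on the direct image $M' = H^\ell_{\idtminusf}(\R[\seq{t}])$. Specifically, the sixth axiom in \Cref{VfilM} demands that $\sum_{i=1}^\ell \pd{t_i} t_i - \alpha$ acts nilpotently on $V^\alpha\wrtt M' / \bigcup_{\gamma > \alpha} V^\gamma\wrtt M'$, which forces the jumps of the $V$-filtration to occur precisely at values $\alpha$ such that $-\alpha$ is a root of the relevant $b$-function. Restricting from $M'$ to its subspace $\R \cong 0 :_{M'} \idtminusf$ and using the definition of the relative Bernstein--Sato polynomial $b^\R_{I,g}(s)$ as the minimal polynomial in $s=s_1+\cdots+s_\ell$ realizing \eqref{FunctionalEquationIdeals} with $g \fsll$ on both sides, one concludes that $g \in V^\alpha\wrtf \R$ if and only if every root $-\gamma$ of $b^\R_{I,g}(s)$ satisfies $\gamma \geq \alpha$. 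Taking the union over $\alpha > \lambda$ then produces the desired equivalence with (3).

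For (1) $=$ (2), I would use the graph embedding $i_{\seq{f}}\colon X \to X \times \mathbb{A}^\ell$ to reduce to the case of the coordinate ideal $\ideal{\seq{t}} \subseteq \R[\seq{t}]$. On the $V$-filtration side this reduction is built into \Cref{DefVfilIdeal}; on the multiplier ideal side, one invokes the standard restriction formula comparing $\cJ_\R(I^\lambda)$ with the multiplier ideal of the pullback along the graph embedding (using that the graph is a smooth complete intersection cut out by the regular sequence $\seq{t}-\seq{f}$). In the coordinate-ideal case, $\cJ(\ideal{\seq{t}}^\lambda)$ can be read off from the log resolution obtained by a single blowup of the smooth subvariety $V(\seq{t})$, yielding a monomial ideal determined by $\lfloor \lambda \rfloor$ and $\ell$. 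The $V$-filtration on the direct image along $\seq{t}$ can likewise be computed explicitly from the monomial structure, and a direct comparison of the two descriptions completes the proof.

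The main obstacle will be making the $b$-function characterization of the $V$-filtration precise in the multi-parameter case: the functional equation involves several indeterminates $\seq{s}$, yet the $V$-filtration is indexed by a single rational number $\alpha$, so one must carefully verify that the total degree $s_1 + \cdots + s_\ell$ is what controls the filtration indexing. A secondary difficulty is checking that the graph-embedding reduction is compatible with both the multiplier ideal and the $V$-filtration simultaneously; this requires some $D$-module bookkeeping about direct images under closed embeddings of smooth subvarieties, and using that the $V$-filtration along $\seq{f}$ is by definition the restriction to $\R$ of the coordinate $V$-filtration on $M'$.
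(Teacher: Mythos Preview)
The paper does not prove this statement at all: \Cref{ThmMultIdealVersions} is quoted from Budur--Musta\c{t}\u{a}--Saito \cite[Theorem~1, Corollary~2]{BMS2006a} and is stated without proof, serving as background input to the paper's own results (in particular \Cref{ThmMultV}). There is therefore nothing in the paper to compare your proposal against.

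That said, a brief comment on the proposal itself. Your outline for (2)~$=$~(3) is in the right spirit, but the step ``$g \in V^\alpha\wrtf \R$ if and only if every root $-\gamma$ of $b^\R_{I,g}(s)$ satisfies $\gamma \geq \alpha$'' is not a direct consequence of the nilpotency axiom alone; it requires the construction of the $V$-filtration via Bernstein--Sato polynomials (the Sabbah--Kashiwara description), which is where the real work lies. For (1)~$=$~(2), your reduction to the coordinate ideal via the graph embedding is not what is done in \cite{BMS2006a}: Budur--Saito's argument (in the principal case) and its generalization by Budur--Musta\c{t}\u{a}--Saito proceed by comparing both sides on a log resolution using bifiltered direct images, not by an explicit blowup computation for $\ideal{\seq{t}}$. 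Your proposed route would only compute $\cJ_{\R[\seq{t}]}(\ideal{\seq{t}}^\lambda)$, whereas what is needed is the $V$-filtration on ${i_{\seq{f}}}_\ast \R$ restricted to $\R$, which is not the same object; the graph embedding changes the $D$-module but not the ideal whose multiplier ideals you are computing.
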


The definition of the multiplier ideals can be easily extended to the case that $X$ is $\QQ$-Gorenstein, where it is possible to construct the relative canonical divisor.
A version of multiplier ideals in normal varieties was defined  by de Fernex and Hacon \cite{dFH}, and  later extended by Chiecchio, Enescu, Miller, and Schwede, to include a boundary divisor term  \cite{CEMS}.

Our aim in this section is to study multiplier ideals for direct summands of coordinate rings of smooth varieties, and to extend \Cref{ThmMultIdealVersions} to this setting. Our most general result in this direction is the following.

\begin{theorem}\label{ThmMultV}
Let $\R$ be a polynomial ring over a field $\KK$ of characteristic zero, and let $\A\subseteq \R$ be a differentially extensible inclusion of finitely generated $\KK$-algebras, such that $\A$ is a direct summand of $\R$.	
Then for every ideal $I$ of $\A$, and every real number $\lambda\geq 0$,  the following ideals coincide\textup:
\begin{enumerate}
\item $\cJ_\R((I\R)^\lambda)\cap \A$, 
\item $\bigcup_{\alpha>\lambda}V^\alpha \A$,  where the $V$-filtration is taken along $I$, and 
\item  $\{ g\in \A : \gamma>\lambda \hbox{  if } b^\A_{I,g}(-\gamma)=0\}$.
\end{enumerate}
\end{theorem}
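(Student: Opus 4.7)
The plan is to show $(1) = (2)$ and $(1) = (3)$ separately, reducing each equality to the corresponding statement in the polynomial ring $\R$ provided by \Cref{ThmMultIdealVersions}, and then transferring between $\R$ and $\A$ using the results on $V$-filtrations and Bernstein--Sato polynomials developed for differentially extensible summands.

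For $(1) = (2)$, I would start from \Cref{DefVfilIdeal}, writing $V^\alpha\wrtf \A = V^\alpha\wrtt M' \cap \A$, where $M' = H^\ell_{\idtminusf}(\A[\seq{t}])$ for a chosen generating set $\seq{f}$ of $I$. Because $\A \subseteq \R$ is differentially extensible, so is $\A[\seq{t}] \subseteq \R[\seq{t}]$ (by extending operators coefficientwise in the polynomial expansion in $\seq{t}$ and $\pd{\seq{t}}$), and hence \Cref{PropLcExt} ensures $M'$ is a differentially extensible summand of $N' = H^\ell_{\idtminusf}(\R[\seq{t}])$. Since $N'$ is regular holonomic with quasi-unipotent monodromy, being a local cohomology module of the polynomial ring $\R[\seq{t}]$, \Cref{ThmExistenceVfil} yields $V^\alpha\wrtt M' = V^\alpha\wrtt N' \cap M'$. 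Identifying $\A$ with $0 :_{M'} \idtminusf$ and $\R$ with $0 :_{N'} \idtminusf$, I would then compute
\[
V^\alpha\wrtf \A = V^\alpha\wrtt N' \cap \A = \bigl(V^\alpha\wrtt N' \cap \R\bigr) \cap \A = V^\alpha\wrtf \R \cap \A.
\]
Taking the union over $\alpha > \lambda$ and applying \Cref{ThmMultIdealVersions} to $\R$ then gives
\[
\bigcup_{\alpha > \lambda} V^\alpha \A = \Bigl(\bigcup_{\alpha > \lambda} V^\alpha \R\Bigr) \cap \A = \cJ_\R((I\R)^\lambda) \cap \A,
\]
settling $(1) = (2)$.

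For $(1) = (3)$, I would invoke \Cref{ThmBSEqual}, which under the differential extensibility hypothesis gives $b^\A_{I,g}(s) = b^\R_{I\R,g}(s)$ for every $g \in \A$. The set in $(3)$ is therefore identical to $\{ g \in \R : \gamma > \lambda \text{ if } b^\R_{I\R,g}(-\gamma) = 0 \} \cap \A$, and \Cref{ThmMultIdealVersions} identifies this with $\cJ_\R((I\R)^\lambda) \cap \A$, completing the argument.

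No genuine obstacle remains after the machinery of the previous sections is in place; the only point that deserves explicit attention is that differential extensibility propagates to the constructions used here, namely to the polynomial extension $\A[\seq{t}] \subseteq \R[\seq{t}]$ and thence to the relevant local cohomology modules. The former is a routine coefficientwise extension, and the latter is \Cref{PropLcExt}. Once these observations are made, the proof reduces to a short chain of set-theoretic identities built on \Cref{ThmMultIdealVersions}.
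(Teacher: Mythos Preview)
Your proposal is correct and follows essentially the same approach as the paper's proof: reduce both (2) and (3) to their counterparts in $\R$ intersected with $\A$, then invoke \Cref{ThmMultIdealVersions}. The paper's argument is terser---it simply asserts that $\bigcup_{\alpha>\lambda}V^\alpha \A = (\bigcup_{\alpha>\lambda}V^\alpha \R)\cap \A$ without spelling out the chain of identities through \Cref{ThmExistenceVfil} and \Cref{PropLcExt} as you do---but the underlying logic is identical.
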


\begin{proof}
From the equality $ b^\A_{I,g}(s)= b^\R_{I\R,g}(s)$ given in \Cref{ThmBSEqual}, we deduce 
\[
\{ g\in \A : \gamma>\lambda \hbox{  if } b^\A_{I,g}(-\gamma)=0\}=
\{ g\in \R : \gamma>\lambda \hbox{  if } b^\R_{I\R,g}(-\gamma)=0\}\cap \A.
\]
Moreover, $\bigcup_{\alpha>\lambda}V^\alpha \A$ is the intersection  $(\bigcup_{\alpha>\lambda}V^\alpha \R)\cap \A$, where the $V$-filtrations are taken along $I$ in $\A$, and along $I\R$ in $\R$. 
The claim then follows from \Cref{ThmMultIdealVersions}.
\end{proof}


Of course, it would be desirable if $\cJ_\R((I\R)^\lambda)\cap \A$ were equal to $\cJ_\A(I^\lambda)$, and we show this, under certain conditions, in \Cref{theorem-multiplier}. To prove this result, we make a detour through positive characteristic methods, relying on the well-known relation between multiplier ideals and test ideals \cite{Karen2000,Hara2001,HY2003}.

\subsection{Test ideals and Cartier extensibility}

Throughout this subsection, $\T$ denotes a  commutative Noetherian ring of prime characteristic $p$. 
The $e$-th iteration of the Frobenius endomorphism $F^e$ on $\T$, given by $r\mapsto r^{p^e}$, yields a new $\T$-module structure on $\T$ by restriction of scalars.
We denote this module action on $\T$ by $F^e_* \T$, so that for $r\in \T$ and $F^e_*x \in F^e_* \T$, 
\[r \cdot F^e_*x= F^e_*(r^{p^e}x) \in F^e_* \T.\]
We say that $\T$ is \emph{$F$-finite} if $F^e_* \T$ is a finitely generated $\T$-module for some (equivalently, all) $e>0$.

The iterated Frobenius endomorphism can be used to classify singularities. 
 
\begin{definition}
   Let $\T$ be an $F$-finite Noetherian domain of prime characteristic.
   \begin{enumerate}
      \item We call $\T$ \emph{$F$-pure} if the inclusion map $\T\to F^e_* \T$ splits for some (equivalently, all) $e>0$.
      \item We call $\T$ \emph{strongly $F$-regular} if for every $r\in \T$, there exists $e\in\NN$ such that the $\T$-linear map $\T\to F^e_* \T$ sending $1$ to $F^e_* r$ splits.
   \end{enumerate}
\end{definition}
		
A regular ring is strongly $F$-regular, and a direct summand of a strongly $F$-regular ring is itself strongly $F$-regular \cite[Theorem~3.1]{HoHuStrong}. 
		
\begin{definition} \label{pelinear: D}
   Let $\T$ be an $F$-finite Noetherian domain of prime characteristic~$p$.
   An additive map $\psi\colon \T\to \T$ is a \emph{$p^{-e}$-linear map} if $\psi(r^{p^e} f)=r\psi(f)$.
   Such a map is also referred to as a \emph{Cartier operator}.
   The set of all the $p^{-e}$-linear maps on $\T$ is denoted $\cC^e_\T$; in the notation of \Cref{pelinear: D}, $\cC^e_\T \cong \Hom_\T(F^e_* \T,\T)$.
\end{definition}

Test ideals first appeared in the theory of tight closure developed by Hochster and Huneke \cite{HH1990, HoHu2}.
Hara and Yoshida subsequently defined test ideals associated to pairs $(\T, I^\lambda)$, where $I$ is an ideal of a regular ring $\T$, and $\lambda$ is a nonnegative real parameter  \cite{HY2003}.
A tight closure-free version of test ideals was developed by Blickle, Musta\c{t}\u{a}, and Smith, using Cartier operators over regular rings \cite{BMS2008, BMS2009}.
This new approach was extended to nonregular rings  \cite{TestQGor, BlickleP-1maps, BB-CartierMod}. 
For strongly $F$-regular rings, we have the following description (see \cite{TT}).

\begin{definition}
   Let $\T$ be an $F$-finite Noetherian domain of prime characteristic~$p$ that is strongly $F$-regular.
   The \emph{test ideal} of an ideal $I$ of $\T$ with respect to a real parameter $\lambda\geq 0$ is defined by
   \[
      \ti{\T}\big(I^{\lambda}\big)=\bigcup_{e>0} \Bigg(\sum_{\psi\in \cC^e_\T} \psi\Big( I^{\lceil p^e \lambda \rceil}\Big)\Bigg).
   \]
\end{definition}
		
Cartier operators are closely related to differential operators, so it is natural to consider our condition of differentiable extensibility in this context, as well.
		
\begin{definition}[Cartier extensibility] \label{Def_Cartier_ext}
   Consider a ring homomorphism $\varphi\colon \A\to \T$,  and fix a Cartier operator $\psi\in \cC^e_{\A}$.  
We say that 
	\begin{enumerate}
		\item  A Cartier operator $\widetilde{\psi}\in \cC^e_{\T}$ \emph{extends} $\psi$ if $\varphi \circ \psi = \widetilde{\psi}\circ \varphi$; i.e., the following diagram commutes:
\[
\xymatrix{
\A\ar[d]_{\psi} \ar[r]^{\varphi} & \T\ar[d]^{\widetilde{\psi}}\\
\A \ar[r]^{\varphi}  & \T }
\]
		If $\varphi$ is simply an inclusion, 
		then this is equivalent to the condition that $\widetilde{\psi}|_\A=\psi$. 

		\item The map $\varphi$ is \emph{Cartier extensible} if for every $\psi\in \cC^e_{\A}$,  there exists  $\widetilde{\psi}\in \cC^e_{\T}$ that extends $\psi$.
		
    \end{enumerate}  
\end{definition}		
	
\begin{lemma}\label{LDE->CE}
   An inclusion $\A\subseteq \T$ of $F$-finite $F$-pure domains of prime characteristic $p>0$
   that is differentially extensible with respect to the level filtration is also Cartier extensible.
\end{lemma}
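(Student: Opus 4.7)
The plan is to leverage the natural bridge between Cartier operators and differential operators supplied by the Frobenius $F^e$. Starting with $\psi \in \cC^e_\A$, I would first form the composition $\eta := F^e_\A \circ \psi$, where $F^e_\A \colon \A \to \A$ denotes the $e$-th iterated Frobenius on $\A$. A direct check using the $p^{-e}$-linearity of $\psi$ gives $\eta(a^{p^e} x) = (a\psi(x))^{p^e} = a^{p^e} \eta(x)$ for $a, x \in \A$, so $\eta$ is $\A^{p^e}$-linear and thus $\eta \in D^{(e)}_{\A}$.

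Next, I would invoke level-differential extensibility of $\A \subseteq \T$ to produce some $\widetilde{\eta} \in D^{(e)}_{\T}$ with $\widetilde{\eta}|_\A = \eta$. Because $\T$ is $F$-finite and $F$-pure, the inclusion $\T \hookrightarrow F^e_* \T$ splits as $\T$-modules, yielding a Cartier operator $\phi_\T \in \cC^e_\T$ with $\phi_\T(1) = 1$ (an $F^e$-splitting of $\T$). With these ingredients in hand, I define the candidate extension
\[
\widetilde{\psi} \;:=\; \phi_\T \circ \widetilde{\eta} \colon \T \to \T.
\]

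To confirm that $\widetilde{\psi}$ is the desired Cartier extension, observe first that pre-composing any Cartier operator with a level-$e$ differential operator yields a new Cartier operator: for $r, x \in \T$,
\[
\widetilde{\psi}(r^{p^e} x) \;=\; \phi_\T(\widetilde{\eta}(r^{p^e} x)) \;=\; \phi_\T(r^{p^e}\, \widetilde{\eta}(x)) \;=\; r\, \phi_\T(\widetilde{\eta}(x)) \;=\; r\, \widetilde{\psi}(x),
\]
so $\widetilde{\psi} \in \cC^e_\T$. For the extension property, note that for $a \in \A$ we have $\widetilde{\eta}(a) = \eta(a) = \psi(a)^{p^e}$, whence
\[
\widetilde{\psi}(a) \;=\; \phi_\T(\psi(a)^{p^e}) \;=\; \psi(a)\, \phi_\T(1) \;=\; \psi(a),
\]
again by $p^{-e}$-linearity and the splitting property of $\phi_\T$. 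In particular $\widetilde{\psi}(\A) \subseteq \A$ and $\widetilde{\psi}|_\A = \psi$.

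I do not anticipate any serious obstacle. The one conceptual point worth flagging is that a priori the extended operator $\widetilde{\eta}$ need not take values in $\T^{p^e}$, which would prevent a direct ``Frobenius $p^e$-th root'' construction of $\widetilde{\psi}$ from $\widetilde{\eta}$ alone. The role of the $F$-purity hypothesis on $\T$ is precisely to supply the splitting $\phi_\T$ that absorbs this defect, and the normalization $\phi_\T(1) = 1$ is exactly what guarantees that the restriction to $\A$ recovers $\psi$ without any correction term.
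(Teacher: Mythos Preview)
Your proof is correct and follows essentially the same approach as the paper. The paper phrases things in the module-theoretic language of $F^e_*$ (composing $\psi$ with the inclusion $i_\A\colon \A \to F^e_*\A$ to land in $D^{(e)}_\A$, extending, then post-composing with a Frobenius splitting $\pi_\T$), while you write the same maps as additive endomorphisms ($\eta = F^e_\A \circ \psi$, extend, then compose with an $F$-splitting $\phi_\T$); the two arguments are identical after unwinding the identifications.
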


\begin{proof}
   Let $i_\T\colon \T\to F^e_* \T$ be the inclusion map, and $\pi_\T\colon F^e_* \T\to \T$ a $\T$-linear splitting.
   Likewise, let $i_\A\colon \A\to F^e_* \A$ be the inclusion map, and $\pi_\A$ an $\A$-splitting.
   Given an $\A$-linear map $\psi\colon F^e_* \A \to \A$, we obtain an $\A$-linear map $i_\A \circ \psi$ on $F^e_* \A$.
   Note that $\psi = \pi_\A \circ (i_\A \circ \psi)$.
   By assumption, there is a $\T$-linear map $\beta$ on $F^e_*\T$ such that $\beta|_{F^e_* \A} = i_\A \circ \psi$.
Then since  $\mathrm{im}(\beta|_{F^e_* \A}) \subseteq \A$ and $(\pi_\T)|_{\A}$ is the identity map,
$\pi_\T \circ \beta \in \cC^e_\T$ extends $\psi$.
\end{proof}

\begin{proposition}\label{PropTestIdealRetraction}
   Let $\R$ be a regular $F$-finite domain of prime characteristic $p>0$. Let $\A\subseteq \R$ be an  extension of Noetherian rings, such that $\A$ is a Cartier extensible direct summand of $\R$.
   Then for every ideal $I$ of $\A$, and every real number $\lambda \geq 0$. 
   \[\ti{\A}(I^\lambda)=\ti{\R}((I\R)^\lambda)\cap \A. \]
\end{proposition}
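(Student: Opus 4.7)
The plan is to establish the two containments separately, using the explicit Cartier-operator description of the test ideal. First I would observe that $\A$ is strongly $F$-regular as a direct summand of the strongly $F$-regular ring $\R$, which licenses applying the displayed formula both to $\ti{\A}(I^\lambda)$ and to $\ti{\R}((I\R)^\lambda)$.

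For the containment $\ti{\A}(I^\lambda) \subseteq \ti{\R}((I\R)^\lambda) \cap \A$, Cartier extensibility gives the argument almost immediately: given $\psi \in \cC^e_\A$ and $x \in I^{\lceil p^e\lambda\rceil}$, choose an extension $\widetilde{\psi} \in \cC^e_\R$ of $\psi$ and note that $\psi(x) = \widetilde{\psi}(x) \in \widetilde{\psi}\bigl((I\R)^{\lceil p^e\lambda\rceil}\bigr) \subseteq \ti{\R}\bigl((I\R)^\lambda\bigr)$; since $\psi(x) \in \A$ automatically, this finishes the forward direction.

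The reverse containment is where the real work lies, and it does not use extensibility, only the splitting $\beta\colon \R \to \A$. Given $r \in \ti{\R}((I\R)^\lambda) \cap \A$, write $r = \sum_i \varphi_i(y_i)$ with $\varphi_i \in \cC^e_\R$ and $y_i \in (I\R)^{\lceil p^e\lambda\rceil}$. Since $(I\R)^{\lceil p^e\lambda\rceil} = I^{\lceil p^e\lambda\rceil}\R$, one can decompose each $y_i$ as $\sum_j a_{ij} r_{ij}$ with $a_{ij} \in I^{\lceil p^e\lambda\rceil} \subseteq \A$ and $r_{ij} \in \R$. The key move is to absorb each $r_{ij}$ into the Cartier operator: the map $\varphi_{ij}(-) \coloneqq \varphi_i(r_{ij} \cdot -)$ is still $\R$-linear on $F^e_*\R$, so $\varphi_i(y_i) = \sum_j \varphi_{ij}(a_{ij})$ with the arguments now in $\A$. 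Applying $\beta$ and using $\beta(r) = r$ yields $r = \sum_{i,j} \beta(\varphi_{ij}(a_{ij}))$, and a routine check confirms that $\beta \circ \varphi_{ij}|_{F^e_*\A}$ lies in $\cC^e_\A$, placing $r$ in $\ti{\A}(I^\lambda)$. I expect the main obstacle, if any, to be the bookkeeping around this last step: keeping the module structures on $F^e_*\R$ and $F^e_*\A$ straight while verifying that precomposition with multiplication by $r_{ij}$ preserves $\R$-linearity and that $\beta \circ \varphi_{ij}|_{F^e_*\A}$ is $\A$-linear.
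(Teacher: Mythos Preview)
Your proposal is correct and follows essentially the same strategy as the paper: Cartier extensibility gives $\ti{\A}(I^\lambda)\subseteq \ti{\R}((I\R)^\lambda)\cap \A$, and the splitting $\beta$ gives the reverse containment. The only difference is one of emphasis: the paper cites the splitting direction as already known \cite[Proposition~4.9]{AMHNB} and treats the extensibility direction as the new content, whereas you label the splitting direction ``where the real work lies'' and supply its proof in full (your argument is, in fact, a correct reconstruction of that cited result).
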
 

\begin{proof}
Since $\A$ is a direct summand of $\R$, $\ti{\R}((I\R)^\lambda)\cap \A\subseteq \ti{\A}(I^\lambda)$
\cite[Proposition~4.9]{AMHNB}, and we focus on the other containment. For any $f\in \ti{\A}(I^\lambda)$,  there exists $e\in \NN$ for which
$f\in \cC^e_\A I^{\lceil p^e \lambda \rceil}$. Then there exist $g_1,\ldots, g_\ell\in  I^{\lceil p^e \lambda \rceil}$ and $\psi_1,\ldots, \psi_\ell\in \cC^e_\A$ such that $f=\psi_1 g_1+\cdots + \psi_\ell g_\ell$. Given extensions $\widetilde{\psi}_i\in \cC^e_\R$ of $\psi_i$, 
\[
f=\psi_1 g_1+\cdots + \psi_\ell g_\ell =\widetilde{\psi}_1 g_1+\cdots + \widetilde{\psi}_\ell g_\ell\in\cC^e_\R  I^{\lceil p^e \lambda \rceil}\subseteq \ti{\R}((I\R)^\lambda)
\]
so that $\ti{\A}(I^\lambda)\subseteq \ti{\R}((I\R)^\lambda)\cap \A$.
\end{proof}

\subsection{Multiplier ideals and reduction modulo $p$}
                
The relationship between multiplier ideals and test ideals using reduction to prime characteristic $p$ was originally observed in work of Smith \cite{Karen1997b,Karen2000} and Hara \cite{Hara2001}.  
For the case of pairs, we refer to work of Hara and Yoshida \cite{HY2003}, to that of Takagi \cite{Takagi04} for $\QQ$-Gorenstein varieties, and to that of de Fernex, Docampo, Takagi, and Tucker for the extension to the numerically $\QQ$-Gorenstein case \cite{dFDTT2015}. Herein, we call upon a result of Chiechio, Enescu, Miller, and Schwede that states that multiplier ideals reduce to test ideals via reduction to prime characteristic for a large class of rings \cite{CEMS}.

Before explaining the result in \emph{loc.\,cit.}, we recall the basics on reduction to prime characteristic. For more details, we 
 refer to Hochster and Huneke's notes on tight closure in characteristic zero \cite{HHCharZero}.

\begin{definition}[Model for an ideal in a finitely generated $\KK$-algebra] \label{DefModel}
Suppose that $\A$ is a finitely generated $\KK$-algebra, where $\KK$ has characteristic zero, and fix an ideal $I$ of $\A$. 
 Fix a finitely generated $\ZZ$-subalgebra $B$ of $\KK$,  
  and a $B$-subalgebra $\A_B$ of $\A$ essentially of finite type over $B$,  that satisfy the following conditions:
 $\A_B$ is flat over $B$, $\A_B \otimes_B \KK \cong \A$, and $I_B \A = I$, where $I_B = I \cap \A_B \subseteq \A_B$.
For every closed point $s\in\Spec(B)$, 
we set $\A_s = \A_B \otimes_B \kappa(s)$ and $I_s = I_B \A_s \subseteq \A_s$, where $\kappa(s)$ denotes the residue field of $s$.
We say that $(B,\A_B,I_B)$ is a \emph{model} for $(\KK,\A,I)$.

If, additionally, $\R$ is another finitely generated $\KK$-algebra, and $\varphi\colon \A \to \R$ is a $\KK$-algebra homomorphism, then we say that $(B,\A_B,I_B,\R_B,\varphi_B)$ is a \emph{model} for $(\KK,\A,I,\R,\varphi)$ if $(B,\A_B,I_B)$ is a model for $(\KK,\A,I)$, $(B,\R_B,0)$ is a model for $(\KK,\R,0)$, and $\varphi_B\colon \A_B \to \R_B$ satisfies $\varphi=\varphi_B \otimes_B \KK$. We write $\varphi_s = \varphi_B \otimes_B \kappa(s)$ for $s\in\Spec(B)$ closed.
\end{definition}

\begin{remark}
   Note that if $\A$ is a subalgebra of a polynomial ring $\R$ over a field $\KK$ of characteristic zero, and $\A$ is generated by polynomials with integer coefficients, then one can take $B=\ZZ$ in a model for $\A$, and $\A_{\ZZ}$ the $\ZZ$-algebra with the same generators.
   In this case, a condition on $\A_s$, for all $s$ in a dense open subset of $\Spec(\ZZ)$ is equivalent to this condition on $\A_{\ZZ}/p \A_{\ZZ}$, for all $p \gg 0$. We write $\A_p$ to denote $\A_{\ZZ}/p \A_{\ZZ}$ in this case.
\end{remark}

\begin{definition}[Anticanonical cover]
 Let $\A$ be a normal ring that is either complete and local, or $\NN$-graded and finitely generated over its zeroth graded component. The \emph{anticanonical cover} of $\A$ is the symbolic Rees algebra
 \[\cR (-\omega_\A)= \bigoplus_{n\geq 0} \omega_\A^{(-n)},\] where $\omega_\A$ is the canonical module of $\A$. 
\end{definition}

By  recent results in the minimal model program  \cite[Corollary~1.1.9]{BCHM}, we know that for varieties with KLT singularities, $\cR (-\omega_\A)$ is finitely generated.

\begin{theorem}[{\cite[Theorem~D]{CEMS}}]\label{ThmCEMS}
   Suppose that $\A$ is a finitely generated $\KK$-algebra over an algebraically closed field of characteristic zero such that the anticanonical cover $\cR (-\omega_\A)$ is finitely generated.
Let $I$ be an ideal of $\A$, and fix $\lambda\in \RR_{\ge 0}$. 
Let $(B,\A_B,I_B)$ be a model for $(\KK,\A,I).$
Then
\[
[\cJ_\A (I^\lambda)]_s=\ti{\A_s}(I^\lambda_s)
\]
for every closed point $s$ in a dense open subset of $\Spec(B)$.
\end{theorem}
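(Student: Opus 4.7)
The plan is to reduce the statement to the $\QQ$-Gorenstein case, where the analogue of the result is established by work of Takagi, Hara--Yoshida, and others, exploiting the finite generation of $\cR(-\omega_\A)$ to produce an appropriate boundary divisor.

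First, I would use the hypothesis that $\cR(-\omega_\A)$ is finitely generated to produce an effective $\QQ$-divisor $\Delta$ on $X = \Spec(\A)$, supported on a fixed closed subset of small codimension, such that $K_X + \Delta$ is $\QQ$-Cartier. Following the framework of de Fernex--Hacon \cite{dFH} as generalized in \cite{CEMS}, for $\Delta$ chosen to be a sufficiently small generic perturbation one has $\cJ_\A(I^\lambda) = \cJ_{(\A,\Delta)}(I^\lambda)$, where the right-hand side denotes the multiplier ideal of the $\QQ$-Gorenstein pair. The finite generation is essential here: it ensures that there exists some positive integer $N$ for which $\omega_\A^{(-N)}$ is a line bundle, and it produces a finite supply of candidate boundaries that are controlled uniformly rather than depending on ever finer perturbations.

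Next, I would spread the data out to the model over $B$: after possibly enlarging the finitely generated $\ZZ$-subalgebra $B$ of $\KK$, a log resolution $\pi\colon Y \to X$ of the pair $(X, \Delta \cdot I)$ extends to a resolution $\pi_B\colon Y_B \to X_B$ over $B$. For closed points $s$ in a dense open subset of $\Spec(B)$, the fiber $\pi_s$ remains a log resolution of the reduced pair, all divisors reduce compatibly, and the pushforward computations defining the multiplier ideal commute with base change by generic flatness and Grauert's theorem. At this point I would invoke Takagi's theorem \cite{Takagi04} (and its refinements for pairs), which shows that the reduction modulo $p$ of the multiplier ideal of a $\QQ$-Gorenstein pair agrees with the corresponding test ideal of the reduced pair in positive characteristic. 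A final comparison identifies this test ideal of the pair with the test ideal $\ti{\A_s}(I_s^\lambda)$ appearing in the statement, using strong $F$-regularity of $\A_s$ for $s$ in a dense open subset, which follows from the characteristic-zero singularities of $\A$ via standard reduction-to-characteristic-$p$ arguments.

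The main obstacle is the non-$\QQ$-Gorenstein nature of $\A$: without finite generation of $\cR(-\omega_\A)$ there is no canonical candidate for $\Delta$, and in characteristic $p$ the relevant line bundles on a log resolution need not descend to Frobenius-compatible structures on $X_s$. The finite generation hypothesis is precisely what permits passage to an index-$N$ cyclic cover and the construction of a single boundary divisor that behaves well uniformly in $p$. A secondary subtlety is controlling the small-perturbation equality $\cJ_\A(I^\lambda) = \cJ_{(\A,\Delta)}(I^\lambda)$ uniformly over the open locus in $\Spec(B)$: one must ensure that the parameters controlling the perturbation are bounded independently of the characteristic, which again uses finite generation essentially to avoid taking a limit that would spoil spreading-out.
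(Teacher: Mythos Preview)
The paper does not prove this statement at all: it is quoted verbatim as \cite[Theorem~D]{CEMS} and used as a black box in the proof of \Cref{theorem-multiplier}. There is therefore no ``paper's own proof'' to compare your proposal against.

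Your sketch is a plausible outline of the strategy in \cite{CEMS}, and the ingredients you name (reduction to the $\QQ$-Gorenstein case via a boundary $\Delta$ made possible by finite generation of the anticanonical cover, spreading out a log resolution, invoking Takagi's reduction-mod-$p$ comparison for pairs, and identifying the resulting test ideal with $\ti{\A_s}(I_s^\lambda)$ via strong $F$-regularity) are indeed the relevant ones. But since the present paper simply imports this result, your proposal is not so much an alternative proof as a summary of the cited external argument; for the purposes of this paper no proof is expected or given.
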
	
 	

Consider the following setup.

\begin{setup}\label{setup: multiplier}
Let $\R$ be a polynomial ri
ng with coefficients in  an algebraically closed field $\KK$ of characteristic zero.
   Suppose that $\A \subseteq \R$ is differentially extensible, that the inclusion map $\varphi$ makes $\A$ a direct summand of $\R$, and that $\A$ is finitely generated over $\KK$ with KLT singularities.
Moreover, let $I$ be an ideal of $\A$, and let $(B,\A_B,I_B,\R_B,\varphi_B)$ be a model for $(\KK,\A,I,\R,\varphi)$. Suppose that for all $s$ in a dense open subset $U$ of $\Spec(B)$, $\A_s$ is a direct summand of $\R_s$ and the inclusion $\varphi_s$ of $\A_s$ into $\R_s$ is also Cartier extensible. 
\end{setup}

In this setting, we fully understand the relationship between the multiplier ideals $\cJ_\A (I^\lambda)$ and $\cJ_\R((I\R)$.

\begin{theorem}\label{theorem-multiplier}
 In the context of \Cref{setup: multiplier}, for every real number $\lambda \geq 0$, 
 \[
\cJ_\A (I^\lambda)=\cJ_\R((I\R)^\lambda)\cap \A.
\]
\end{theorem}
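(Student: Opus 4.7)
The plan is to establish the equality by reducing to positive characteristic and combining \Cref{ThmCEMS} with its positive-characteristic test-ideal analog \Cref{PropTestIdealRetraction}.

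First I would verify that \Cref{ThmCEMS} applies to both $\A$ and $\R$. Since $\R$ is a polynomial ring, $\omega_\R$ is free and the anticanonical cover $\cR(-\omega_\R)$ is trivially finitely generated; for $\A$, the finite generation of $\cR(-\omega_\A)$ follows from the KLT hypothesis together with the cited minimal model program results. After possibly shrinking $U$ to a common dense open subset $U' \subseteq \Spec(B)$ that works for both applications of \Cref{ThmCEMS} and on which the Cartier extensibility assumption of \Cref{setup: multiplier} still holds, for every closed point $s \in U'$ one obtains
\begin{equation*}
[\cJ_\A(I^\lambda)]_s = \ti{\A_s}(I_s^\lambda) \qquad \text{and} \qquad [\cJ_\R((I\R)^\lambda)]_s = \ti{\R_s}((I_s\R_s)^\lambda).
\end{equation*}
Since $\R_s$ is a regular $F$-finite domain and $\A_s$ is a Cartier extensible direct summand of $\R_s$, \Cref{PropTestIdealRetraction} yields $\ti{\A_s}(I_s^\lambda) = \ti{\R_s}((I_s\R_s)^\lambda) \cap \A_s$, and chaining these identifications gives $[\cJ_\A(I^\lambda)]_s = [\cJ_\R((I\R)^\lambda)]_s \cap \A_s$.

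Next I would invoke the standard fact that reduction modulo $p$ commutes with intersections at generic closed points: after shrinking $U'$ further if necessary, one has $[J \cap \A]_s = [J]_s \cap \A_s$ for $J = \cJ_\R((I\R)^\lambda) \subseteq \R$. This follows by choosing compatible models, using generic flatness over $B$, and applying $-\otimes_B \kappa(s)$ to the short exact sequence
\begin{equation*}
0 \to J_B \cap \A_B \to J_B \oplus \A_B \to J_B + \A_B \to 0,
\end{equation*}
where the last term is identified with an ideal of $\R_B$. Combining the preceding steps yields $[\cJ_\A(I^\lambda)]_s = [\cJ_\R((I\R)^\lambda) \cap \A]_s$ for every closed point $s$ in a dense open subset of $\Spec(B)$, from which one concludes by the standard principle that two ideals of a finitely generated $\KK$-algebra whose reductions modulo $p$ coincide at a dense set of closed points of the base must be equal in characteristic zero.

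The main obstacle is the bookkeeping required to guarantee a single model in which all of the hypotheses---generic flatness, preservation of the direct-summand splitting, Cartier extensibility of $\A_s \subseteq \R_s$, compatibility of intersections with base change, and the CEMS comparison for both $\A$ and $\R$---are realized simultaneously on a dense open subset of $\Spec(B)$. Each such condition is standard in isolation and handled by shrinking $\Spec(B)$, but one must check that the resulting open subset remains nonempty, and in particular that the Cartier-extensibility hypothesis built into \Cref{setup: multiplier} propagates compatibly with the spreading out required by \Cref{ThmCEMS}.
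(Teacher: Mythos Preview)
Your proposal is correct and uses the same key ingredients as the paper---\Cref{ThmCEMS} applied to both $\A$ and $\R$, together with \Cref{PropTestIdealRetraction} in positive characteristic. The difference is organizational: the paper argues the two containments \emph{elementwise}. For $f\in\cJ_\A(I^\lambda)$ one has $f_s\in[\cJ_\A(I^\lambda)]_s$ automatically, pushes through to $f_s\in[\cJ_\R((I\R)^\lambda)]_s$, and concludes $f\in\cJ_\R((I\R)^\lambda)\cap\A$; the reverse containment is handled symmetrically. This elementwise approach uses only the two trivial facts that $f\in J$ implies $f_s\in J_s$, and that $f_s\in J_s$ for a dense set of $s$ implies $f\in J$. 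In particular, it entirely sidesteps the intersection-commutes-with-base-change step that you correctly identify as the main bookkeeping obstacle in your ideal-level argument. Your short exact sequence argument for that step is fine, but the paper's route simply does not need it.
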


\begin{proof}
   Take $f\in \cJ_\A (I^\lambda)$, and for $s\in \Spec(B)$, let $f_s$ denote the image of $f$ in $\A_s$.
   Then $f_s\in [ \cJ_\A (I^\lambda)]_s$ for every $s$, so $f_s\in \ti{\A_s}(I^\lambda_s)$ for $s \in U$ by \Cref{ThmCEMS}.
Thus, $f_s\in  \ti{\R_s}((I\R_s)^\lambda)\cap \A_s$ by \Cref{PropTestIdealRetraction}. In particular, $f_s\in  \ti{\R_s}((I\R_s)^\lambda)$, so that  $f_s\in  [ \cJ_\R ((I\R)^\lambda)]_s$ for $s \in U$, and $f\in \cJ_ \R((I\R)^\lambda)\cap \A$.

Now take $f\in  \cJ_\R((I\R)^\lambda)\cap \A$, so that $f_s\in [ \cJ_\R (I^\lambda)]_s \cap \A_s$ for every $s$.
Then $f_s\in  \ti{\R_s}((I\R_s)^\lambda)\cap \A_s=\ti{\A_s}(I^\lambda_s)$ for all $s$ in the dense open set described in \Cref{ThmCEMS} and \Cref{PropTestIdealRetraction}.
Then $f_s\in [ \cJ_\A (I^\lambda)]_s$  for such $s$, and so $f\in \cJ_\A (I^\lambda)$.
\end{proof}

The hypotheses that the inclusion  $\A\subseteq \R$ is differentially extensible, and split as $\A$-modules, are essential for \Cref{theorem-multiplier}, as the following example shows.

\begin{example} \label{ex_det}
 Given  $\A=\CC[x^2,y^2] \subseteq \R= \CC[x,y]$, and $I=\ideal{x^2,y^2}\subseteq \A$, then $\cJ_\A(I)=\A$, whereas $\cJ_\R(I\R)=\ideal{x,y}$. In particular, $\cJ_\R(I\R)\cap \A \neq \cJ_\A(I)$.
\end{example}

The following example indicates that the assumption in \Cref{theorem-multiplier} that, after reduction to prime characteristic, $\A_p\subseteq \R_p$ is a direct summand that is also Cartier extensible for $p\gg 0$, might not be necessary. 

\begin{example}\label{ExDetl3x3}  Let $X,Y,Z$ be  $3\times 3$, $3 \times 2$, and $2\times 3$ matrices of indeterminates, respectively. Let $\T=\CC[X]$, $\A=\T/\ideal{\det(X)}$, and $\R=\CC[Y,Z]$. Then the $\CC$-algebra map sending $x_{ij}$ in $\A$ to the $(i,j)$-entry of the matrix product $YZ$ realizes $\A$ as an extensible direct summand of $\R$ \cite[IV~1.9]{LS}.
   Let $I\subseteq \A$ be the ideal generated by the $2\times 2$ minors~of~$X$. 

Using the methods of Johnson  \cite[Chapters~4~and~5]{Johnson} along with techniques of Takagi \cite[Theorem~3.1]{TakagiAdj}, one can compute $\cJ_\A(I^\lambda)=\m^{\lfloor 2 \lambda\rfloor-5} \cap I^{\lfloor \lambda\rfloor-1}$, where $\m$ is the ideal generated by the entries of $X$. We have that $\cJ_\R(I\R^{\lambda})=(I_1(Y)I_1(Z))^{\lfloor 2 \lambda \rfloor -5} \cap (I_2(Y)I_2(Z))^{\lfloor \lambda \rfloor -1}$ using \cite[Theorem~5.4]{Johnson}  and \cite[Variant~2.5]{Subadd}, where $I_i(-)$ denotes the ideal of $i\times i$-minors of a matrix. It is then easy to verify that $\cJ_\R(I\R^{\lambda})\cap \A = \cJ_\A(I^{\lambda})$.

However, for $p$ prime, the inclusion $\A_p \subseteq \R_p$ is not a direct summand. Indeed, suppose it were. Then there would be an injection $H^8_{\m_p}(\A_p)\hookrightarrow H^8_{\m_p \R_p}(\R_p)$.  However, $H^8_{\m_p}(\A_p)\neq 0$ by Grothendieck nonvanishing, and since $\mathrm{pd}_{\R_p}(\m_p \R_p) = 7$, we know that $H^8_{\m_p \R_p}(\R_p)=0$ by Peskine--Szpiro vanishing, yielding a contradiction.
\end{example}

Motivated by \Cref{ThmMultV,theorem-multiplier} and \Cref{ExDetl3x3}, we wonder whether is it possible to drop the condition of Cartier extensibility after reduction to prime characteristic. Namely, we pose the following question:

\begin{question}\label{QuestionConjecture}
Let $\A$ be a differentially extensible  summand of a polynomial ring over a field of characteristic zero.  Given an ideal $I$ of $\A$, and a real number $\lambda \geq 0$, 
do the following ideals coincide?
    \begin{enumerate}
        \item $\cJ_\A(I^\lambda)$,
        \item $\bigcup_{\alpha>\lambda}V^\alpha \A$, where the $V$-filtration is taken along  $I$, and 
        \item  $\{ g\in \A : \gamma>\lambda \hbox{  if } 
        b^\A_{I,g}(-\gamma)=0\}$.
    \end{enumerate}
\end{question}

The following consequence of \Cref{theorem-multiplier} may be known to experts in some cases. 

\begin{corollary}
   In the context of \Cref{setup: multiplier} we fix a real number $\lambda < 1$ and an ideal $I=\ideal{g_1,\ldots,g_\ell}$ of $\A$. 
   Moreover, let $g$ be a general $\KK$-linear combination of $g_1, \ldots, g_\ell$.  Then
$
\cJ_\A (I^\lambda)= 
\cJ_\A (f^\lambda)$, and as a consequence,
$\lct(f)=\min\{\lct(I),1\}$.
\end{corollary}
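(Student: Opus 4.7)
The plan is to reduce the claim in $\A$ to the corresponding well-known statement in the polynomial ring $\R$, using \Cref{theorem-multiplier} together with a classical fact on multiplier ideals of generic linear combinations of generators in the smooth setting. (I interpret the $f$ in the statement as the $g$ just defined.)

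First, I would apply \Cref{theorem-multiplier} twice, to the ideal $I$ and to the principal ideal $g\A$, to obtain
\[
\cJ_\A(I^\lambda) = \cJ_\R((I\R)^\lambda) \cap \A \quad \text{and} \quad \cJ_\A(g^\lambda) = \cJ_\R((g\R)^\lambda) \cap \A.
\]
Next, a standard result in the smooth case (see, e.g., \cite[Chapter~9]{Laz2004}) asserts that if an ideal $J$ in $\R$ is generated by $g_1, \ldots, g_\ell$ over an infinite field, then for a general $\KK$-linear combination $g = \sum c_i g_i$ of the $g_i$ and every $\lambda < 1$, one has $\cJ_\R(J^\lambda) = \cJ_\R((g\R)^\lambda)$. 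Applied with $J = I\R$---noting that the same generators $g_1, \ldots, g_\ell$ work both for $I \subseteq \A$ and for $I\R \subseteq \R$---and intersected with $\A$, this gives the first assertion $\cJ_\A(I^\lambda) = \cJ_\A(g^\lambda)$.

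For the statement on log canonical thresholds, I would first observe that $\lct_\A(g) \leq 1$: by \Cref{theorem-multiplier}, $\cJ_\A(g^\lambda) = \A$ is equivalent to $1 \in \cJ_\R((g\R)^\lambda)$, i.e., to $\cJ_\R((g\R)^\lambda) = \R$, and hence $\lct_\A(g) = \lct_\R(g\R) \leq 1$ for any non-unit $g$ in the polynomial ring. The equality $\cJ_\A(I^\lambda) = \cJ_\A(g^\lambda)$ for $\lambda < 1$ then implies that if $\lct_\A(I) < 1$ the two thresholds coincide, while if $\lct_\A(I) \geq 1$ then $\cJ_\A(g^\lambda) = \A$ for every $\lambda < 1$, forcing $\lct_\A(g) = 1$. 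In either case, $\lct_\A(g) = \min\{\lct_\A(I), 1\}$.

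There is no real obstacle to this argument once \Cref{theorem-multiplier} is available; the only subtlety worth checking is that genericity of the linear combination---taken in Lazarsfeld's result with respect to $\R$---is compatible with coefficients in the base field $\KK$, which is harmless because $\KK$ is infinite (in fact algebraically closed) under \Cref{setup: multiplier}.
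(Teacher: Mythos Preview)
Your proof is correct and follows essentially the same route as the paper: apply \Cref{theorem-multiplier} to both $I$ and the principal ideal, and bridge the two via the classical smooth result \cite[Theorem~9.2.28]{Laz2004}. The paper omits the verification of the $\lct$ consequence, which you spell out correctly.
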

\begin{proof}
Since
$\cJ_\R((I\R)^\lambda)=\cJ_\R(f^\lambda)$ \cite[Theorem 9.2.28]{Laz2004}, then by \Cref{theorem-multiplier}, 
\[
\cJ_\A (I^\lambda)=\cJ_\R((I\R)^\lambda)\cap \A=\cJ_\R(f^\lambda)\cap \A=\cJ_\A (f^\lambda). \qedhere
\]
\end{proof}

The following is an extension to our setting of results for smooth varieties that relate the jumping numbers of multiplier ideals with the roots of Bernstein--Sato polynomials \cite{ELSV2004,BMS2006a}. 

\begin{theorem}\label{theorem-jn-roots}
Under \Cref{setup: multiplier}, the log canonical threshold of
 $I$ in $\A$ is the smallest root $\alpha_I$ of
 $b_I(-s)$, and every jumping number of $I$ in $[\alpha_I,\alpha_I +1)$ is a root of $b_I(-s)$.
\end{theorem}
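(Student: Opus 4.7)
The plan is to reduce both statements to the corresponding theorems of Ein--Lazarsfeld--Smith--Varolin and Budur--Musta\c{t}\u{a}--Saito for the polynomial ring $\R$, via two comparison results already established in the paper: the multiplier ideal identification $\cJ_\A(I^\lambda) = \cJ_\R((I\R)^\lambda) \cap \A$ from \Cref{theorem-multiplier}, and the equality $b^\A_I(s) = b^\R_{I\R}(s)$ from \Cref{ThmBSEqual}.

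First I would show that $\lct_\A(I) = \lct_\R(I\R)$. Applying \Cref{theorem-multiplier}, for any $\lambda < \lct_\R(I\R)$ one has $\cJ_\R((I\R)^\lambda) = \R$, and hence $\cJ_\A(I^\lambda) = \A$; conversely, at $\lambda = \lct_\R(I\R)$ the ideal $\cJ_\R((I\R)^\lambda)$ is a proper ideal of $\R$, so $1 \notin \cJ_\R((I\R)^\lambda)$, and because $1 \in \A$ this forces $\cJ_\A(I^\lambda) \neq \A$. Combining $\lct_\A(I) = \lct_\R(I\R)$ with the classical ELSV/BMS identification of $\lct_\R(I\R)$ as the smallest root of $b^\R_{I\R}(-s)$ and with $b^\A_I = b^\R_{I\R}$ yields the first conclusion.

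Next I would verify that every jumping number of $I$ in $\A$ is automatically a jumping number of $I\R$ in $\R$. Indeed, if $\cJ_\R((I\R)^{\lambda - \varepsilon}) = \cJ_\R((I\R)^\lambda)$ for all sufficiently small $\varepsilon > 0$, then intersecting with $\A$ via \Cref{theorem-multiplier} gives $\cJ_\A(I^{\lambda - \varepsilon}) = \cJ_\A(I^\lambda)$, contradicting $\lambda$ being a jumping number of $I$ in $\A$. The smooth-case ELSV/BMS result then identifies every jumping number of $I\R$ lying in $[\lct_\R(I\R), \lct_\R(I\R) + 1)$ as a root of $b^\R_{I\R}(-s)$, and the identifications $\lct_\A(I) = \lct_\R(I\R)$ and $b^\A_I = b^\R_{I\R}$ transfer this statement to $\A$.

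The conceptual work is already absorbed by \Cref{theorem-multiplier} and \Cref{ThmBSEqual}; the remaining argument is essentially bookkeeping about when intersection with a direct summand preserves strict inclusions and which side of the comparison produces the identification of the log canonical threshold. The one genuine hypothesis to keep track of is the Cartier extensibility after reduction modulo $p$ demanded by \Cref{setup: multiplier}, without which \Cref{theorem-multiplier} is not available, so this is the main place where the argument could fail if one attempts to weaken the standing assumptions.
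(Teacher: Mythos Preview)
Your proposal is correct and follows essentially the same route as the paper's own proof: both reduce to the smooth case via $\cJ_\A(I^\lambda)=\cJ_\R((I\R)^\lambda)\cap\A$ from \Cref{theorem-multiplier} and $b^\A_I=b^\R_{I\R}$ from \Cref{ThmBSEqual}, deducing $\lct_\A(I)=\lct_\R(I\R)$ and that jumping numbers in $\A$ are jumping numbers in $\R$. Your write-up is somewhat more explicit about why properness transfers under intersection with $\A$, but the argument is the same.
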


\begin{proof}
Since $\cJ_\A (I^\lambda)=\cJ_\R((I\R)^\lambda)\cap \A$, we have that $\cJ_\A (I^\lambda)=\A $ if and only if $\cJ_\R((I\R)^\lambda)=\R$. As a consequence, 
$\lct_\A(I)=\lct_\R(I\R)$. Since $b^\A_I(-s)=b^\R_{I\R}(-s)$ by \Cref{ThmBSEqual}, the first claim follows.
Furthermore, every jumping number of $I$ in $\A$ is also a jumping number of $I\R$ in $\R$.  Then the second claim follows again from \Cref{ThmBSEqual}.
\end{proof}

The above results apply to pointed normal affine toric varieties.
Recall that by \Cref{toricODE}, \Cref{toricLDE}, and 
\Cref{toricODELDE}, there is an embedding of $\A$ into a polynomial ring $\R$ over $\KK$ such that, $\A\subseteq \R$ is order-differentially extensible, 
and $\A_p \subseteq \R_p$ is level-differentially extensible for $p\gg 0$.
The ring $R$ in the following theorem is chosen in this way.

\begin{corollary}\label{PropToric}
   Let $I$ be an ideal of $\A$, the coordinate ring of a pointed normal affine toric variety over a field $\KK$ of characteristic zero.
   Then there exists an embedding of $\A$ into a polynomial ring $\R$ over $\KK$ such that   
$
\cJ_\A (I^\lambda)=\cJ_\R((I\R)^\lambda)\cap \A
$ 
for all real numbers $\lambda\geq 0$.
\end{corollary}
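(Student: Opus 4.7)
The plan is to apply \Cref{theorem-multiplier} using the Hsiao--Matusevich embedding. Concretely, by \Cref{toricODE} together with \Cref{toricODELDE}, I would take an embedding $\A\hookrightarrow\R$ that realizes the pointed normal affine semigroup ring $\A$ as a monomial direct summand of a polynomial ring $\R$ over $\KK$, in such a way that the inclusion is order-differentially extensible, and, for every $p\gg 0$, the reduction $\A_p\subseteq \R_p$ is level-differentially extensible by \Cref{toricLDE}.

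Next I would verify the hypotheses of \Cref{setup: multiplier}. The ring $\A$ is finitely generated over $\KK$ and, as a normal toric variety, has log terminal singularities; in particular, it is KLT (with trivial boundary) and its anticanonical cover is finitely generated. The standard monomial splitting $\R\to \A$ (projection onto those monomials whose exponent vectors lie in the semigroup defining $\A$) is defined over $\ZZ$, so for a finitely generated $\ZZ$-subalgebra $B\subseteq\KK$ supporting a model $(B,\A_B,I_B,\R_B,\varphi_B)$ for $(\KK,\A,I,\R,\varphi)$, the induced map $\A_s\subseteq \R_s$ remains a monomial direct summand for every closed point $s\in\Spec(B)$. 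For $p\gg 0$, $\A_p$ is $F$-pure (being a direct summand of a regular ring) and $F$-finite after possibly shrinking the open subset so that residue fields are perfect; by \Cref{LDE->CE}, level-differential extensibility of $\A_p\subseteq \R_p$ then upgrades to Cartier extensibility, providing the last hypothesis of \Cref{setup: multiplier}.

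If $\KK$ is not algebraically closed, I would first base change to $\overline{\KK}$: the monomial embedding, the splitting, and differential extensibility all persist, while multiplier ideals are preserved under faithfully flat extension of the base field, so the conclusion descends. Applying \Cref{theorem-multiplier} then yields $\cJ_\A(I^\lambda)=\cJ_\R((I\R)^\lambda)\cap\A$ for every $\lambda\geq 0$. The main obstacle is essentially bookkeeping: checking that on a common dense open subset of $\Spec(B)$, the direct summand property, differential extensibility, $F$-purity, $F$-finiteness, and Cartier extensibility all hold simultaneously, so that the machinery of \Cref{theorem-multiplier} can be invoked with a single model uniformly for all closed points $s$ in that subset.
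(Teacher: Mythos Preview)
Your approach is essentially the same as the paper's: choose the Hsiao--Matusevich embedding so that $\A\subseteq\R$ is order-differentially extensible and $\A_p\subseteq\R_p$ is level-differentially extensible for $p\gg 0$, invoke \Cref{LDE->CE} to upgrade to Cartier extensibility, and then apply \Cref{theorem-multiplier}. You supply more detail than the paper does---explicitly checking the KLT condition, the persistence of the monomial splitting in the model, and the reduction to the algebraically closed case---but the logical skeleton is identical.
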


\begin{proof}
With $\R$ chosen as in the discussion before the statement, $\A_p \subseteq \R_p$ is Cartier extensible for $p\gg 0$ by \Cref{LDE->CE}, so  we can apply \Cref{theorem-multiplier}.
\end{proof}

The following result is already established for monomial ideals $I$  \cite{HsiaoMatusevich}.

\begin{corollary}\label{ThmToric}
	Let $I\subseteq \A$ be an ideal in an affine normal toric ring over an algebraically closed field $\KK$
	of characteristic zero. 
	Then the log canonical threshold of 
	$ I$ in $\A$ coincides with the smallest root $\alpha_I$ of the Bernstein--Sato polynomial 
	$b_I(-s)$, and any jumping numbers of $I$ in $[\alpha_I,\alpha_I +1)$ are roots of $b_I(-s)$. 
	\qed
\end{corollary}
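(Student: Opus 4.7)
The plan is to deduce this corollary directly from \Cref{theorem-jn-roots} by verifying that every pointed normal affine toric ring $\A$ over the algebraically closed field $\KK$ fits into \Cref{setup: multiplier} for a suitably chosen polynomial ring $\R$. The statement of the corollary is a perfect fit for \Cref{theorem-jn-roots}, so the entire task is to check the chain of hypotheses.

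First, I would invoke \Cref{toricODE} to obtain an embedding of $\A$ into a polynomial ring $\R$ over $\KK$ as a monomial subalgebra that realizes $\A$ as a direct summand and is order-differentially extensible. Crucially, I would take this to be the specific embedding referred to in \Cref{toricODELDE} (the Hsiao--Matusevich embedding), since that construction has the extra feature that for all but finitely many primes $p$, the reduction modulo $p$ of the inclusion $\A_p \subseteq \R_p$ is level-differentially extensible. This is the only place where the choice of embedding matters, and it is exactly the reason \Cref{toricODELDE} was stated.

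Next I would verify the remaining conditions in \Cref{setup: multiplier}. Normal affine toric varieties are classically known to have log terminal (in particular KLT) singularities, so the anticanonical cover $\cR(-\omega_\A)$ is finitely generated and \Cref{ThmCEMS} is available. For the positive-characteristic side, $\R_p$ is regular and $F$-finite, while $\A_p$ is $F$-pure (indeed strongly $F$-regular, since normal affine toric rings in positive characteristic are direct summands of polynomial rings). Hence \Cref{LDE->CE} upgrades the level-differential extensibility provided by \Cref{toricODELDE} to Cartier extensibility of $\A_p \subseteq \R_p$ for $p \gg 0$. At this point all the hypotheses of \Cref{setup: multiplier} are in place, and \Cref{theorem-jn-roots} yields both conclusions: that $\lct_\A(I)$ is the smallest root of $b_I^\A(-s)$, and that every jumping number of $I$ in $[\alpha_I, \alpha_I+1)$ is a root of $b_I^\A(-s)$.

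No serious obstacle remains, since the corollary is essentially a packaging statement. The only care needed is in bookkeeping: one must ensure that the toric embedding of \Cref{toricODELDE} \emph{simultaneously} delivers order-differential extensibility in characteristic zero and level-differential extensibility after reduction for cofinitely many primes, so that both halves of \Cref{setup: multiplier} can be satisfied by the same inclusion $\A \subseteq \R$. Once this coherence is observed, the proof is a direct invocation of \Cref{theorem-jn-roots}.
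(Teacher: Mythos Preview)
Your proposal is correct and matches the paper's approach: the corollary is stated with a \qed because it is immediate from \Cref{theorem-jn-roots} once \Cref{setup: multiplier} has been verified for pointed normal affine toric rings, and that verification is precisely the content of the discussion preceding \Cref{PropToric} together with its proof (invoking \Cref{toricODE}, \Cref{toricODELDE}, and \Cref{LDE->CE}). Your write-up fills in a couple of details the paper leaves implicit (notably that normal toric varieties are KLT and that the reductions $\A_p$ are $F$-pure), but the logical structure is identical.
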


We obtain similar results  for rings of invariants under the action of a finite group.

\begin{corollary}\label{ThmGroup}
 Let $\KK$ be a field of characteristic zero, and  $G$ a finite group acting linearly on a polynomial ring $\R$ over $\KK$. Suppose that $G$ contains no element that fixes a hyperplane in the space of one-forms $[\R]_1$. Let $\A=\R^G$ denote the ring of invariants. 
 Then the log canonical threshold of 
 $ I$ in $\A$ coincides with the smallest root $\alpha_I$ of 
 $b_I(-s)$, and every jumping number of $I$ in $[\alpha_I,\alpha_I +1)$ is a root of $b_I(-s)$.
 \qed
\end{corollary}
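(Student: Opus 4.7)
The plan is to reduce \Cref{ThmGroup} to \Cref{theorem-jn-roots} by verifying that the pair $\A = \R^G \subseteq \R$ satisfies the hypotheses of \Cref{setup: multiplier}. The work is thus checking the following list: (i) $\A$ is a finitely generated $\KK$-subalgebra of $\R$ that is a direct summand; (ii) the inclusion $\A \subseteq \R$ is differentially extensible; (iii) $\A$ has KLT singularities; and (iv) after reducing modulo $p$ for a dense open subset of primes, the inclusion $\A_p \subseteq \R_p$ remains a Cartier extensible direct summand.

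First, items (i) and (ii) are classical in combination with earlier results of this paper. The Reynolds operator $\tfrac{1}{|G|}\sum_{g\in G} g$ provides an $\A$-linear splitting $\R \to \A$ (since $\mathrm{char}(\KK) = 0$), and Hilbert's finiteness theorem gives that $\A$ is a finitely generated $\KK$-algebra. Differential extensibility of $\A \subseteq \R$ is exactly \Cref{E-finitegroup}, whose pseudoreflection-free hypothesis matches our assumption on $G$. For (iii), since $G$ contains no pseudoreflection, the quotient map $\Spec(\R) \to \Spec(\A)$ is étale in codimension one, so $\omega_\A$ is invertible and $\A$ is in fact Gorenstein with canonical (hence KLT) singularities by the standard Chevalley--Shephard--Todd-type analysis (one may equivalently invoke that direct summands of regular rings in characteristic zero have rational, hence KLT, singularities when the ramification is tame and in codimension $\geq 2$).

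The main point requiring attention is (iv). Choose a finitely generated $\ZZ$-subalgebra $B \subseteq \KK$ containing $|G|^{-1}$ and the entries of the matrices representing the $G$-action; spread out to obtain a model $(B, \A_B, I_B, \R_B, \varphi_B)$ in which the action of $G$ on $\R_B$ is defined over $B$ and $\A_B = \R_B^G$. For every closed point $s \in \Spec(B)$, the order $|G|$ is invertible in $\kappa(s)$, so the Reynolds operator specializes to an $\A_s$-linear splitting of $\A_s \subseteq \R_s$, and no element of $G$ fixes a hyperplane in $[\R_s]_1$ (this is an open condition in $s$, satisfied generically since it holds over the generic fiber). Applying the positive characteristic statement of \Cref{E-finitegroup}, the inclusion $\A_s \subseteq \R_s$ is level-differentially extensible for all $s$ in this dense open subset. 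Because $\R_s$ is a regular $F$-finite ring and $\A_s$ is a direct summand (hence $F$-pure and also $F$-finite, as $\R_s$ is module-finite over it), \Cref{LDE->CE} upgrades level-differential extensibility to Cartier extensibility.

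Having verified items (i)--(iv), \Cref{setup: multiplier} applies, and \Cref{theorem-jn-roots} yields the conclusion: $\lct_\A(I) = \alpha_I$, the smallest root of $b_I^\A(-s)$, and every jumping number of $I$ in $[\alpha_I, \alpha_I + 1)$ is a root of $b_I^\A(-s)$. The step I expect to be the most delicate is checking that the hypothesis on pseudoreflections and the inversion of $|G|$ persist on a \emph{dense open} subset of $\Spec(B)$ in a way compatible with all of the module-theoretic constructions (Reynolds splitting, $F$-finiteness of $\A_s$, $F$-purity), but each ingredient is routine once $B$ is chosen large enough to invert $|G|$ and to carry the $G$-action.
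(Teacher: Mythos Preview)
Your argument is correct and follows exactly the route the paper intends: the corollary is stated with a bare \qed, signaling that it follows from \Cref{theorem-jn-roots} once \Cref{setup: multiplier} is checked, and the ingredients for that check are precisely \Cref{E-finitegroup} (giving differential extensibility in characteristic zero and level-differential extensibility after reduction mod $p$) together with \Cref{LDE->CE}. The only point not made explicit in either place is that \Cref{setup: multiplier} literally assumes $\KK$ algebraically closed (coming from \Cref{ThmCEMS}), so strictly speaking one should either impose that hypothesis or observe that multiplier ideals, Bernstein--Sato polynomials, and jumping numbers are all insensitive to passing to the algebraic closure.
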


In the polynomial ring setting, we can understand Hodge ideals as a generalization of multiplier ideals. Indeed,
$$ I_0 ^R(f^\lambda) =  \cJ_\R(f^{\lambda-\varepsilon})$$
for $\varepsilon >0$ small enough  \cite[Proposition~9.1]{MPBirational}. In the case of ideals $I\subseteq R$, the same statement holds  when $\lambda \in (0,1]\cap \QQ$  \cite{MPIdeals}.
As a consequence of \Cref{theorem-multiplier}, we have the same property for extensible direct summands of polynomial rings.

\begin{proposition}\label{prop-hodge-multiplier}
	In the context of \Cref{setup: multiplier}, for every  $\lambda \in \QQ_{\geq 0}$ and $\varepsilon >0$ small enough, we have
	\[
	I_0^A(f^\lambda)= \cJ_\A (f^{\lambda-\varepsilon}).
	\]
\end{proposition}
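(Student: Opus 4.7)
The plan is to chain together three results already established in the paper: the comparison of Hodge ideals in $\A$ and $\R$ (Corollary~\ref{CorHodge}), the known identification of the zeroth Hodge ideal with a shifted multiplier ideal in the polynomial ring case \cite[Proposition~9.1]{MPBirational}, and the comparison of multiplier ideals in $\A$ and $\R$ (Theorem~\ref{theorem-multiplier}).

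First, since $\A \subseteq \R$ is a differentially extensible inclusion making $\A$ a direct summand of $\R$, Corollary~\ref{CorHodge} yields
\[
I_0^\A(f^\lambda) = I_0^\R(f^\lambda) \cap \A.
\]
Next, since $\R$ is a polynomial ring over a field of characteristic zero, the Musta\c{t}\u{a}--Popa identification gives, for $\varepsilon > 0$ sufficiently small,
\[
I_0^\R(f^\lambda) = \cJ_\R(f^{\lambda-\varepsilon}).
\]

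Finally, \Cref{setup: multiplier} ensures all hypotheses of Theorem~\ref{theorem-multiplier} are satisfied for the principal ideal $\ideal{f}$, and that theorem gives
\[
\cJ_\A(f^{\lambda-\varepsilon}) = \cJ_\R(f^{\lambda-\varepsilon}) \cap \A.
\]
Concatenating these three equalities yields
\[
I_0^\A(f^\lambda) = I_0^\R(f^\lambda) \cap \A = \cJ_\R(f^{\lambda-\varepsilon}) \cap \A = \cJ_\A(f^{\lambda-\varepsilon}),
\]
as desired.

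There is essentially no obstacle here beyond verifying that the choice of $\varepsilon$ can be made uniformly: the Musta\c{t}\u{a}--Popa equality holds for all $\varepsilon$ smaller than the gap between $\lambda$ and the next jumping number of $\cJ_\R(f^{\bullet})$ below $\lambda$, and Theorem~\ref{theorem-multiplier} identifies multiplier ideals parameter by parameter, so the same $\varepsilon$ works throughout. Thus the statement follows immediately once we observe that the three ingredients all apply under \Cref{setup: multiplier}.
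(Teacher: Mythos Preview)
Your proof is correct and follows essentially the same approach as the paper: both chain together Corollary~\ref{CorHodge}, the Musta\c{t}\u{a}--Popa identification $I_0^\R(f^\lambda)=\cJ_\R(f^{\lambda-\varepsilon})$, and Theorem~\ref{theorem-multiplier} in the same order. Your added remark about the uniformity of $\varepsilon$ is a harmless elaboration the paper omits.
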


\begin{proof}
	Using \Cref{CorHodge} and \Cref{theorem-multiplier} we have
	\[
	I_0^A(f^\lambda)= I^R_0(f^\lambda)\, \cap \, A  =  \cJ_\R (f^{\lambda-\varepsilon})\, \cap \, A 
	=            \cJ_\A (f^{\lambda-\varepsilon}).\qedhere
	\]
\end{proof}

\section*{Acknowledgments}
This project was initiated and developed during an American Institute of Mathematics SQuaRE (Structured Quartet Research Ensemble) titled \emph{Roots of Bernstein--Sato polynomials}. 
We are grateful for AIM's hospitality, and for their support of this project. 
We  thank Devlin Mallory for helping us compute \Cref{ExDetl3x3}, and Eamon Quinlan for a careful reading of the manuscript. We also thank Nero Budur, Manuel Gonz\'alez, Craig Huneke, Mircea Musta\c{t}\u{a}, Luis Narv\'aez-Macarro, Claude Sabbah, and Karl Schwede for helpful conversations and correspondences.
In particular, we thank  Manuel Gonz\'alez for bringing our attention to Hodge ideals, and for discussions regarding Subsection \ref{Hodge}.
\bibliographystyle{myalpha}
\bibliography{refs}

\end{document}